%% file: paper.tex
\documentclass[11pt,reqno]{article}
\input{preamble.tex}

\title{A Model-Informed Deep Learning Algorithm for Solving the Phaseless Inverse Scattering Problem}
 \author{Dinh-Liem Nguyen\thanks{Department of Mathematics, Kansas State University, Manhattan, KS 66506 (\href{mailto:dlnguyen@ksu.edu}{dlnguyen@ksu.edu})} \and  Nhung H. Nguyen\thanks{Department of Mathematics, Kansas State University, Manhattan, KS 66506 (\href{mailto:nhungnh@ksu.edu}{nhungnh@ksu.edu})} \and  Aravinth Ravi\thanks{Department of Mathematics, Kansas State University, Manhattan, KS 66506  (\href{mailto:arav0006@ksu.edu}{arav0006@ksu.edu})}} 
\begin{document}
\date{}
\maketitle
\begin{abstract}


We study in this paper an unsupervised, two-step, model-informed deep learning framework for solving the phaseless inverse scattering problem. The objective is to reconstruct a compactly supported function that characterizes a scatterer from boundary measurements of the modulus of the total wave corresponding to multiple incident waves. In the first step, the phaseless data are transformed into an imaging function that is directly related to the underlying scatterer. Motivated by the contrast source method, we derive a coupled system of model equations using the Lippmann–Schwinger integral equation and a spectral-based equation involving the imaging function.
In the second step, the unknown scatterer and contrast source function are each parameterized as independent feedforward neural networks, which are trained simultaneously at each iteration using the derived system of model equations. The Lippmann-Schwinger integral equation enforces the underlying physics as a model constraint, while the spectral-based equation incorporating the imaging function supplies geometric information on the shape and location of the unknown scatterer. The resulting framework achieves accurate and efficient reconstructions while maintaining robustness to measurement noise. Furthermore, transforming some aspects of the problem to the spectral domain enables an effective reduction in the computational cost, allowing us to recover the unknown scatterer quickly. Numerical experiments in both two- and three-dimensional settings demonstrate the effectiveness, stability, and practical potential of the proposed approach.

\end{abstract}
\textbf{Keywords:} inverse scattering problem, phaseless data, deep learning, model-informed network,  imaging function, 3D reconstructions

\section{Introduction}

Consider an inhomogeneous medium that fills up a bounded Lipschitz domain $D \subset \R^{n}$ for $ n=2 \text{ or } 3$.  Suppose that the medium is characterized by bounded function $q$, satisfying $q = 0$  in $\R^{n} \backslash \overline{D}$. Consider the incident plane wave
\begin{align*}
    u^{in}(x,d) = e^{ikx \cdot d},\qquad x \in \R^{n},\, d \in \mathbb{S}^{n-1} := \{ x \in \R^{n}: |x| = 1 \},
\end{align*}
where $k > 0$ is the wave number and $d$ is the direction vector of propagation. The scattering of $u^{in}$ by the inhomogeneous medium is described by the following model problem:
\begin{align} \label{eq:PDE}
    & \Delta u + k^{2} (1 +q(x))u = 0, \hspace{1.9 cm} x \in \R^{n}, \\ \label{eq:totalfield}
    & u = u^{in} + u^{sc}, \\ \label{eq:radcon}
    & \lim_{r \rightarrow \infty} r^{\frac{n-1}{2}} \left( \frac{\partial u^{sc}}{\partial r} -iku^{sc} \right) = 0, \qquad r = |x|,
\end{align}
where $u$ is the total field, $u^{sc}$ is the scattered field, and the Sommerfeld radiation condition \eqref{eq:radcon} holds uniformly for all directions $x/|x| \in \mathbb{S}^{n-1}$. If $\mathbb{R}^{n} \backslash \bar{D}$ is connected and $\mathrm{Re}(q) \geq 0$,  this scattering problem is known to have a unique weak solution $u^{sc} \in H^{1}_{\text{loc}}(\R^n)$, see~\cite{Colton1992InverseAA}. Let $\Omega$ be a  ball of sufficiently large radius $R_\Omega$ with boundary $\partial \Omega$ such that $\bar{D} \subset \Omega$.  We are interested in the following inverse problem.

\vspace{0.2cm}
\noindent
\textbf{Inverse Problem.}  Determine $q$, given $|u|$ on $\partial\Omega$ associated with all incident directions $d \in \mathbb{S}^{n-1}$. 

\vspace{0.2cm}

Results involving the uniqueness and stability of solution to the inverse scattering problem with phaseless data has been established in \cite{ Xu2017UniquenessII, zhang2020unique} for multiple incident directions and in \cite{Klibanov2014PhaselessIS,Klibanov2015ReconstructionPF,Novikov2015FormulasFP} for multiple frequencies.
Inverse scattering problems are concerned with the recovery of geometric and physical properties of unknown scatterers from boundary measurement data. Such problems arise naturally in a wide range of scientific and engineering disciplines, including subsurface reservoir identification, medical imaging, geophysical prospecting, and microwave remote sensing \cite{Bulyshev2004ThreedimensionalVM,Persico2014IntroductionTG}. A variety of numerical methods have been developed over the past few decades to address inverse scattering problems with phased data \cite{Colton1992InverseAA,beilina2012approximate,cakoni2022inverse,ito2014inverse}.

However, acquiring phased data in practice is often difficult and expensive. In many applications, phaseless data are considerably easier and more cost-effective to obtain, which makes phaseless reconstruction particularly attractive. On the other hand, the absence of phase information renders the inverse problem  more nonlinear and severely ill-posed.
For numerical methods for inverse scattering problems with phaseless data, we refer to \cite{Bao2013NumericalSO, Dong2018ARB, Ji2019InverseAS, Chen2015PhaselessIB, Zhang2018FastIO, Zhang2020AnAF} for qualitative reconstruction results, including the recovery of the shape and location of the scatterer. In contrast, results on coefficient reconstruction are relatively limited. A widely used approach is to first recover the phase information from phaseless measurements and then apply existing numerical methods for phased inverse scattering problems to reconstruct the coefficient $q$; see \cite{klibanov2016two, klibanov2019coefficient, klibanov2018numerical, Ammari2013PartialDR, le2026inverse}.



Recent advances in machine learning, particularly deep learning, have pioneered new paths for addressing the inverse scattering problem \cite{Chen2020ARO,Rzio2026ARO}. Deep learning's capacity to capture nonlinearity and complexity from observed data, generalize from computationally inexpensive synthetic data to costly real-world measurements, and handle incomplete data through techniques such as dropout, regularization, and data augmentation makes it an attractive framework for algorithm design. Deep learning approaches can broadly be classified into two categories: physics-agnostic approaches \cite{Gao2021OnAA,Fan2019SolvingEI,Zhang2022SolvingTW,Zhou2022ANN,nguyen2024tnet}, in which the loss function is determined solely by classical error measures with optional regularization \cite{Khoo2018SwitchNetAN} and no explicit physical prior; and physics-informed approaches, in which knowledge of the underlying physics is explicitly incorporated into the loss function or embedded into the neural network architecture \cite{Chen2019PhysicsinformedNN, Pokkunuru2023ImprovedTO,Zhang2023SolvingAI,Nguyen2024TAENAM,Yin2024PhysicsawareDL,Guo2021PhysicsED} .

Despite this broader progress, corresponding advances in solving inverse problems with phaseless data have been comparatively limited, as reflected in the sparse available literature. The papers \cite{Gao2019MachineLB} and \cite{Yin2020ANN} propose two-step supervised deep learning methods for the inverse obstacle problem, while \cite{ning2025direct} develops a two-step supervised deep learning approach for the inverse scattering problem. All of these methods rely on supervised learning frameworks that require large, labeled datasets for training. As a result, their performance is strongly tied to the distribution of the training data: while they often achieve high-resolution reconstructions when the true parameters lie within this distribution, performance may degrade substantially for out-of-distribution inputs.

Hence, in this paper, we propose a novel unsupervised Fourier-accelerated method which only requires the phaseless boundary measurement data.  Our approach integrates a system of approximate equations derived from the Lippmann-Schwinger integral equation and an equation relating the imaging function to the contrast, with a model-based deep learning framework and is inspired by the  physics-informed deep learning method \cite{Raissi2019PhysicsinformedNN,Cuomo2022ScientificML}. The imaging function utilizes the phaseless boundary measurement data to characterize the unknown scatterer and provide a stable geometric estimate of its location and shape. Furthermore, its approximation can be reformulated as an equation relating the imaging function data to the scatterer, which forms the second equation of the model system. We exploit the convolutional structure of the two above mentioned equations and apply the Fourier transform to derive the state-data equations, which form the objective function in the model-based deep learning algorithm. This derivation of the objective function has been inspired by the contrast source method  \cite{Berg2001ContrastSI}. 
Next, using this system of approximate model equations as an objective function, we concurrently optimize two neural networks to approximate the Fourier coefficients of the contrast source function and the unknown scatterer. 


A related framework was previously developed by the authors for the acoustic inverse source problem \cite{submitted}, where an imaging function capturing the location and shape of the unknown source was integrated in a model equation to recover the Fourier coefficients of the source function. The present work extends this idea to the significantly more challenging setting of phaseless inverse scattering. In contrast to the linear inverse source problem, the lack of phase information in phaseless inverse scattering greatly amplifies both the nonlinearity and ill-posedness of the reconstruction problem.
To overcome these additional challenges, we introduce a new framework based on a system of approximate model equations and jointly train two independent neural networks through a coupled bi-objective loss function, rather than relying on a single network as in the previous work.


This algorithm achieves a stable and rapid reconstruction of the scatterer while requiring the neural network to approximate the contrast source function using far fewer inputs, rather than estimating the parameter over the entire computational domain. Moreover, applying the Fourier transform converts the convolutional structure of the volume integral into pointwise multiplication of Fourier coefficients, drastically reducing computational cost and avoiding the singularity of the Green's function in the state equation. Furthermore, since our method does not rely on an input-output training dataset, the algorithm is free from the constraints imposed by such datasets.

The paper is organized as follows. The imaging function and its analysis are presented in Section \ref{section2}. Section \ref{section3} is dedicated to derivation of the Fourier-accelerated system of approximate model equations. Section \ref{section4} describes the model-based neural network. The numerical study for simulated data is presented in Section \ref{section5}.

\section{A stable imaging function}\label{section2}

In this section, we present an imaging function designed to efficiently process phaseless data measured on a sufficiently distant boundary. In addition to furnishing an equation within the model-informed system, which is discussed in Section \ref{section3}, this imaging function provides an initial estimate of the geometric characteristics of the unknown contrast $q$.


It is well known that the solution of \eqref{eq:PDE}-\eqref{eq:radcon} can be characterized equivalently as the solution of the Lippmann–Schwinger integral equation 
\begin{align}\label{eqn:lippmannschwringerqn}
    u^{sc}(x,d) = k^{2} \int_{\Omega} \Phi(x,y) q(y) u(y,d) dy,\qquad x \in \R^{n}, \, d\in \mathbb{S}^{n-1},
\end{align}
where $\Phi$ is the Green's function of the scattering problem \eqref{eq:PDE}-\eqref{eq:radcon} which is given by
\[
\Phi(x,y)=\Phi(x-y) =
\begin{cases}
\displaystyle 
\frac{i}{4} H_0^{(1)}\!\bigl(k\lvert x - y\rvert\bigr), & n = 2, \\[8pt]
\displaystyle 
\frac{\exp\bigl(ik\lvert x - y\rvert\bigr)}{4\pi \lvert x - y\rvert}, & n = 3.
\end{cases}
\]
for $x, y \in \R^{n}$ and $x\neq y$.
Following the approach studied in~\cite{Chen2015PhaselessIB} we define $u_p$ as follows
\begin{align}\label{phaselessformula}
    u_p(x,d) = \frac{|u(x,d)|^{2}-|u^{in}(x,d)|^{2}}{\overline{u^{in}(x,d)}}. 
\end{align}
Inspired by~\cite{Le2022SamplingTM}, for $z\in \mathbb{R}^n$, we define the imaging function with phaseless data as follows
\begin{align*}
    \mathcal{I}(z,d) := \int_{\partial \Omega} \Psi(x,z)u_p(x,d)    dS(x),
\end{align*}
with
$$\Psi(x,z):=\dfrac{\partial \operatorname{Im}\Phi(x,z)}{\partial\nu(x)}  - ik \operatorname{Im}\Phi(x,z),$$
where $\nu$ is the outward normal unit vector of $\partial \Omega$  and    $$\operatorname{Im}\Phi(x,z) = \begin{cases}
        \frac{1}{4}J_0(k|x-z|),& n=2, \\ \notag
        \frac{k}{4\pi} j_0(k|x-z|),&  n=3.
    \end{cases}$$
  The test function $\Psi$ was introduced in \cite{Le2022SamplingTM} for constructing a stable imaging functional in inverse scattering problems with phased data. A key advantage of $\Psi$ over test functions commonly used in direct sampling methods is that it does not have a singularity.

As with other imaging functions in sampling-based approaches, we will partially justify in the following that the imaging function 
$\mathcal{I}(z,d)$  attains small values when the sampling point $z$ lies outside the scatterer, and significantly larger values when $z$ is located inside the scatterer. Consequently, by evaluating and visualizing $\mathcal{I}(z,d)$ over a sampling domain, one can reconstruct the location and shape of the scatterer. 

With the observation that
\begin{align*}
    u_p(x,d) & = u^{sc}(x,d) +\frac{|u^{sc}(x,d)|^{2}}{\overline{u^{in}(x,d)}} + \frac{\overline{u^{sc}(x,d)}u^{in}(x,d)}{\overline{u^{in}(x,d)}},
\end{align*}
we can expand $\mathcal{I}$ as
\begin{align}
  \notag  
   \mathcal{I}(z,d)= & \int_{\partial \Omega} \Psi(x,z) u^{sc}(x,d)    dS(x)  \\ \label{Iz_phaseless}
   & \hspace{1 cm}+ \int_{\partial \Omega} \Psi(x,z) \frac{|u^{sc}(x,d)|^{2}}{\overline{u^{in}(x,d)}}   dS(x) + \int_{\partial \Omega} \Psi(x,z)\frac{\overline{u^{sc}(x,d)}u^{in}(x,d)}{\overline{u^{in}(x,d)}}  dS(x).
\end{align}
Now let us define
$$\widetilde{\mathcal{I}}(z,d) :=  \int_{\partial \Omega} \Psi(x,z) u^{sc}(x,d)    dS(x).$$
The next theorem shows that the last two integrals in \eqref{Iz_phaseless} become negligible as $R_\Omega \gg1$, where $R_\Omega$ is the radius of the measurement boundary $\partial \Omega$. Consequently, for sufficiently large 
$R_\Omega$, we have $ \mathcal{I} \approx \widetilde{\mathcal{I}}$.

\begin{theorem}
\label{thm:Izdecay}
     For any $z \in \Omega$, we have
     \begin{align}\label{Iz_asym}
    \mathcal{I}(z,d) =\widetilde{ \mathcal{I}}(z,d) + \mathcal{O}\left(R_\Omega^{(1-n)/2}\right). 
\end{align}
\end{theorem}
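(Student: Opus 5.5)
We need to bound the two remainder integrals in \eqref{Iz_phaseless},
\begin{align*}
E_1(z,d)=\int_{\partial\Omega}\Psi(x,z)\frac{|u^{sc}(x,d)|^2}{\overline{u^{in}(x,d)}}\,dS(x),\qquad
E_2(z,d)=\int_{\partial\Omega}\Psi(x,z)\frac{\overline{u^{sc}(x,d)}\,u^{in}(x,d)}{\overline{u^{in}(x,d)}}\,dS(x),
\end{align*}
by $C R_\Omega^{(1-n)/2}$, with $C$ independent of $R_\Omega$. The approach is to combine size estimates of all factors on the sphere $|x|=R_\Omega$ for $z$ in a fixed bounded set. For $E_2$ we will need more than size estimates: its integrand has an oscillating phase, and we will exploit this via the radiation condition. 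For $E_1$ we will use only size estimates.

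\textbf{Step 1 (pointwise bounds).} Since $|u^{in}|=1$, the factor $1/\overline{u^{in}}$ has modulus one and $u^{in}/\overline{u^{in}}=e^{2ikx\cdot d}$. By \eqref{eqn:lippmannschwringerqn}, $q$ is supported in $\overline D$ and $\Phi(x-y)=O(|x|^{(1-n)/2})$ uniformly for $y\in D$. Therefore $|u^{sc}(x,d)|\le C R_\Omega^{(1-n)/2}$ on $\partial\Omega$, and likewise for $\partial_\nu u^{sc}$. From the large-argument asymptotics of $J_0$ and $j_0$, both $\operatorname{Im}\Phi(x,z)$ and $\partial_{\nu(x)}\operatorname{Im}\Phi(x,z)$ are $O(R_\Omega^{(1-n)/2})$, uniformly for $z$ in a compact set. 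Hence $|\Psi(x,z)|\le CR_\Omega^{(1-n)/2}$.

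\textbf{Step 2 ($E_1$).} Using $|\partial\Omega|\sim R_\Omega^{n-1}$,
\[
|E_1|\le C R_\Omega^{n-1}\,R_\Omega^{(1-n)/2}\,R_\Omega^{1-n}=C R_\Omega^{(1-n)/2}.
\]
So $E_1$ already satisfies \eqref{Iz_asym}.

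\textbf{Step 3 ($E_2$, the main obstacle).} The same crude bound gives only $|E_2|\le C R_\Omega^{n-1}R_\Omega^{(1-n)/2}R_\Omega^{(1-n)/2}=O(1)$, so oscillation must be used.

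First, write $\operatorname{Im}\Phi=(\Phi-\overline\Phi)/(2i)$ and use the radiation condition. The outgoing part $\Phi$ satisfies $\partial_\nu\Phi-ik\Phi=O(R^{-(n+1)/2})$. The incoming part $\overline\Phi$ satisfies $\partial_\nu\overline\Phi-ik\overline\Phi=-2ik\overline\Phi+O(R^{-(n+1)/2})$. Consequently
\[
\Psi(x,z)=k\,\overline{\Phi(x,z)}+O\!\left(R_\Omega^{-(n+1)/2}\right),
\]
and the error term contributes $R^{n-1}R^{-(n+1)/2}R^{(1-n)/2}=R^{-1}$ to $E_2$, which is acceptable.

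The leading part is therefore
\[
k\int_{\partial\Omega}\overline{\Phi(x,z)}\,\overline{u^{sc}(x,d)}\,e^{2ikx\cdot d}\,dS(x).
\]
Insert the far-field asymptotics $\Phi(x,z)\approx\gamma_n e^{ikR}R^{(1-n)/2}e^{-ik\hat x\cdot z}$ and $u^{sc}(x,d)\approx e^{ikR}R^{(1-n)/2}u^\infty(\hat x,d)$, where $\hat x=x/|x|$. This reduces the leading part to
\[
c\,e^{-2ikR}\int_{\mathbb S^{n-1}} e^{2ikR\,\hat x\cdot d}\,g(\hat x)\,dS(\hat x),
\]
with $g$ smooth and $R=R_\Omega$; the factor $R^{n-1}$ from the surface measure cancels against $R^{1-n}$ from the two far-field amplitudes.

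This is an oscillatory integral with large parameter $2kR$ and phase $\hat x\cdot d$, whose only critical points are $\hat x=\pm d$ and are nondegenerate. Stationary phase then gives decay $O(R^{(1-n)/2})$. The remainder terms of the far-field expansions are $O(R^{-1})$ relative to the leading terms and are handled by crude bounds.

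The delicate points are the uniformity of the asymptotics in $z\in\Omega$ and justifying stationary phase with smooth amplitudes, which holds because $u^\infty$ is analytic. Combining Steps 2 and 3 yields \eqref{Iz_asym}.
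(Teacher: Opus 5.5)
Your proof is correct and follows the paper's route. Like the paper, you split $\mathcal{I}-\widetilde{\mathcal{I}}$ into the quadratic remainder and the cross remainder and bound each by $CR_\Omega^{(1-n)/2}$; the paper only cites the arguments of Ning--Han--Zou and Chen--Huang for these two bounds, whereas you carry them out, correctly noting that size estimates suffice for the quadratic term while the cross term needs the oscillation of $e^{2ikx\cdot d}$ (through $\Psi=k\overline{\Phi}+O(R_\Omega^{-(n+1)/2})$ and stationary phase at $\hat x=\pm d$), with constants uniform for $z$ in a fixed bounded set, which is the right reading of ``$z\in\Omega$'' since $\Omega$ itself grows with $R_\Omega$.
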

\begin{proof}
    Following a similar argument used in \cite{ning2025direct, Chen2015PhaselessIB}, we can prove that
     \begin{align*}
        \left| \int_{\partial \Omega} \Psi(x,z)\frac{|u^{sc}(x,d)|^{2}}{\overline{u^{in}(x,d)}}   dS(x) \right| \le CR_\Omega^{(1-n)/2},
    \end{align*}
   and
    \begin{align*}
        \left| \int_{\partial \Omega} \Psi(x,z)\frac{\overline{u^{sc}(x,d)}u^{in}(x,d)}{\overline{u^{in}(x,d)}}  dS(x) \right|
        \leq C R_\Omega^{(1-n)/2}.
    \end{align*}
     where $C>0$. These results directly imply \eqref{Iz_asym}.
\end{proof}


The preceding theorem establishes that, for sufficiently large $R_\Omega$, the imaging function $\mathcal{I}$ admits the approximation $\widetilde{\mathcal{I}}$. It is therefore essential to examine the behavior of $\widetilde{\mathcal{I}}$. Since the measurement data are collected on $\partial \Omega$ with sufficiently large radius $R_\Omega$, the Sommerfeld radiation condition \eqref{eq:radcon} implies that $\partial u/\partial \nu \approx ik u$. Consequently, the imaging function can equivalently be expressed as follows
\begin{align}\label{thm:imaging}
    \widetilde{\mathcal{I}}(z,d) = \int_{\partial \Omega} \left[\frac{\partial \mathrm{Im}\Phi(x,z)}{\partial \nu(x)} u^{sc}(x) -\mathrm{Im}\Phi(x,z)\frac{\partial u^{sc}(x)}{\partial \nu(x)} \right] dS(x). 
\end{align}
As studied in~\cite{Le2022SamplingTM}, $\widetilde{\mathcal{I}}(z,d)$ satisfies
  \begin{align*}
       \widetilde{\mathcal{I}}(z,d) = k^{2} \int_{D} \mathrm{Im}\Phi(z,x) q(x)u(x) dx.
  \end{align*}
    This result helps us understand the resolution of the imaging function $\widetilde{\mathcal{I}}(z,d)$ through the localization properties of $J_0(k|y - z|)$ and $j_0(k|y - z|)$. Specifically, these functions exhibit a  peak at $z = y$ and decay rapidly as $|y - z|$ increases. Together with Theorem \ref{thm:Izdecay}, we can partly justify the desired behavior of  ${\mathcal{I}}(z,d)$ as an imaging performance by
    \begin{equation}
        \label{approxI}
        I(z,d) =  k^{2} \int_{D} \mathrm{Im}\Phi(z,x) q(x)u(x) dx+   \mathcal{O}\left(R_\Omega^{(1-n)/2}\right).\end{equation}
        In Figure~\ref{figI(z)} we can see that the summation of $\mathcal{I}(z,d)$ over multiple directions of incident waves $d$ provides a reasonable estimate for the location and shape of the scatterer.

\begin{figure}[H]
    \centering
    \begin{subfigure}[b]{0.4\textwidth}
         \centering
        \includegraphics[width=0.9\textwidth]{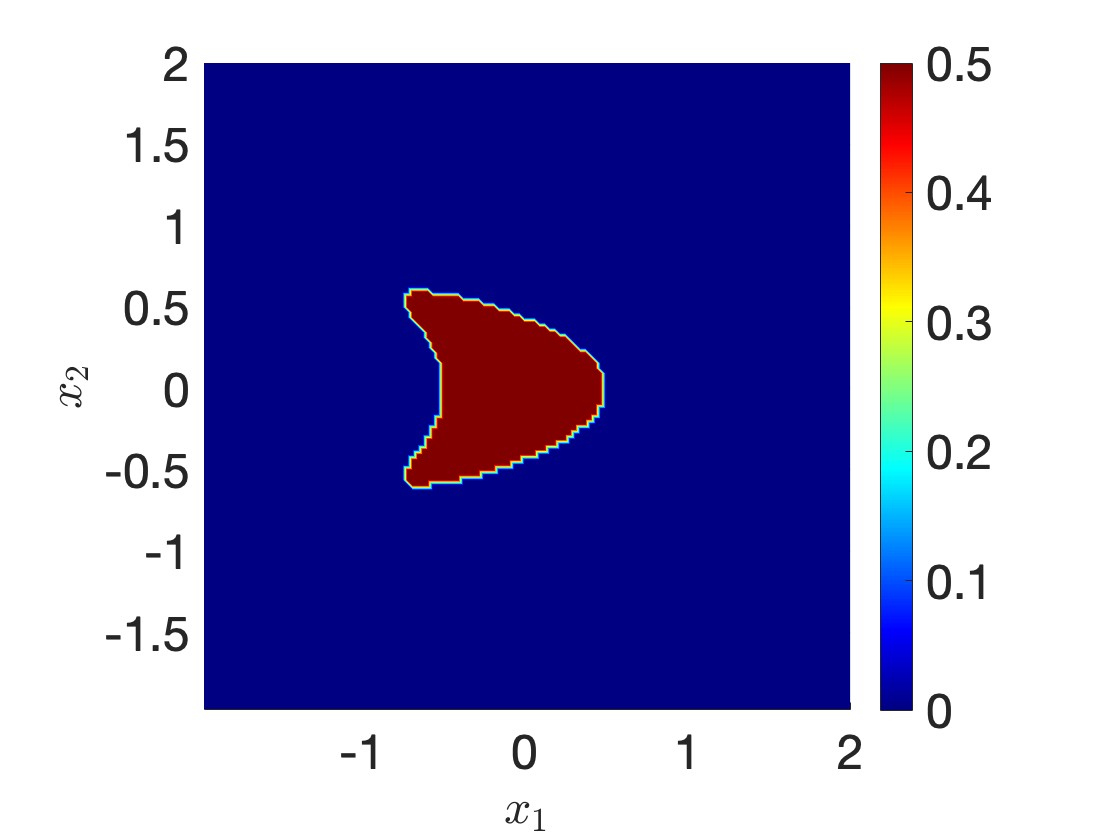}
        \caption{True $q$}
        
    \end{subfigure}
    \begin{subfigure}[b]{0.4\textwidth}
             \centering
         \includegraphics[width=0.9\textwidth]{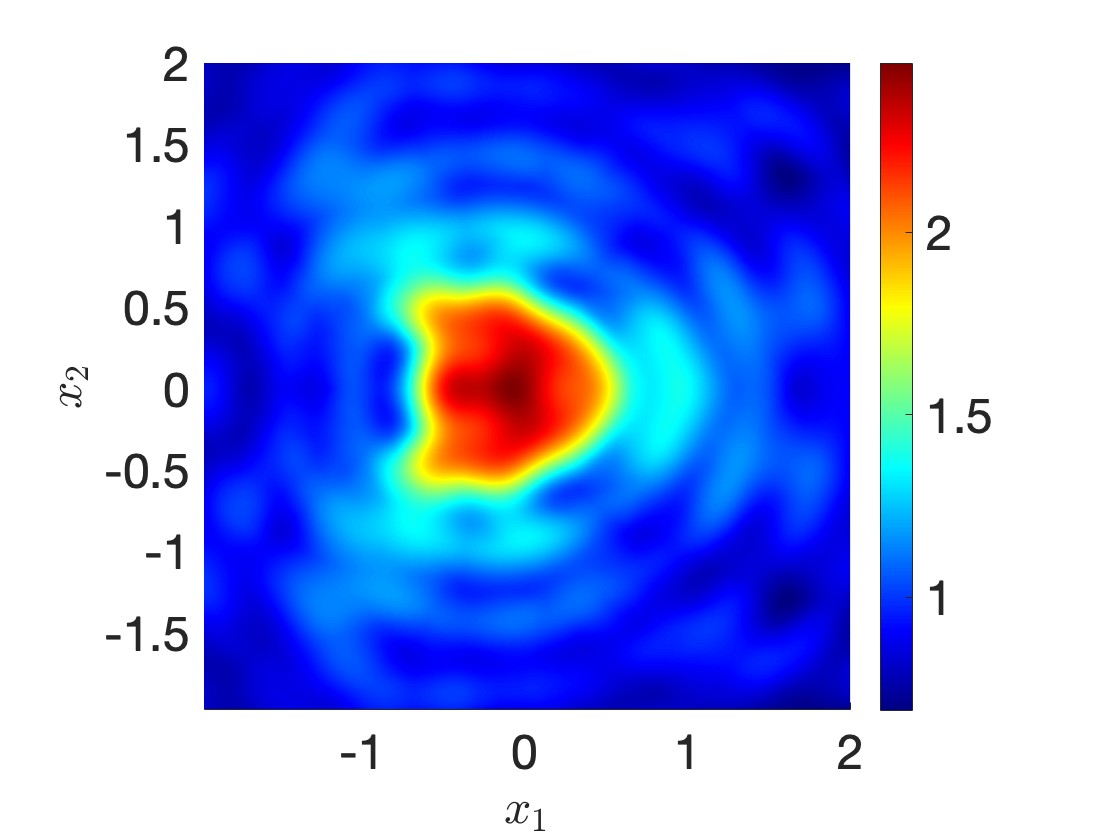}
        \caption{$\sum_{d} |\mathcal{I}|$ with $24$ evenly distributed $d \in \mathbb{S}^{1}$}
        \end{subfigure}
     \caption{True contrast $q$  and an estimate of its support  using $\mathcal{I}$.}
     \label{figI(z)}
\end{figure}
The next theorem demonstrates the stability of the imaging function against noise.
\begin{theorem}
 For $d\in \mathbb{S}^{n-1}$, we assume that $u^\delta(\cdot,d)$  satisfies   
\begin{align*}
\left\| |u(\cdot,d)| - |u^\delta(\cdot,d)|\right\|_{L^2(\partial \Omega)}  \leq \delta< 1.\end{align*}
Let $\mathcal{I}^{\delta}(z,d)$ denote the imaging function defined as in \eqref{Iz_phaseless}, with $|u(\cdot,d)|$ replaced by its noisy counterpart $|u^\delta(\cdot,d)|$. 
The following stability estimate holds
\begin{equation}
    \label{thm:stable}
    \left|\mathcal{I}(z,d)- \mathcal{I}^{\delta}(z,d) \right|\leq C \delta, \qquad \text{for all } z \in \Omega,
\end{equation}
where $C>0$ is independent of $z$ and $\delta$.
\end{theorem}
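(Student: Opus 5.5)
The estimate follows directly from the definitions, because the imaging function depends on the data only through $|u|^2$. Since $|u^{in}(x,d)|=1$, the definition \eqref{phaseless formula} gives
\begin{align*}
\mathcal{I}(z,d)-\mathcal{I}^{\delta}(z,d) = \int_{\partial\Omega} \Psi(x,z)\,\frac{|u(x,d)|^2-|u^\delta(x,d)|^2}{\overline{u^{in}(x,d)}}\, dS(x),
\end{align*}
and $|\overline{u^{in}}|=1$. I will factor the difference of squares as $\bigl(|u|-|u^\delta|\bigr)\bigl(|u|+|u^\delta|\bigr)$ and apply the Cauchy--Schwarz inequality. This gives
\begin{align*}
\left|\mathcal{I}(z,d)-\mathcal{I}^{\delta}(z,d)\right| \le \|\Psi(\cdot,z)\|_{L^\infty(\partial\Omega)} \,\bigl\||u|-|u^\delta|\bigr\|_{L^2(\partial\Omega)}\,\bigl\||u|+|u^\delta|\bigr\|_{L^2(\partial\Omega)}.
\end{align*}

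\textbf{Bounding the test function.} I need $\sup_{x\in\partial\Omega,\,z\in\Omega}|\Psi(x,z)|$ to be finite and independent of $z$. This is where the smoothness of $\operatorname{Im}\Phi$ is used: $J_0$ and $j_0$ are entire and bounded on $[0,\infty)$, and so are their derivatives. Since
\begin{align*}
\partial_{\nu(x)} \operatorname{Im}\Phi(x,z) = c_n\, k\, J_0'(k|x-z|)\,\frac{(x-z)\cdot\nu(x)}{|x-z|}
\end{align*}
(with $j_0'$ in place of $J_0'$ when $n=3$), and $|J_0'|\le 1$, $|j_0'|\le 1$, the factor $\frac{(x-z)\cdot\nu}{|x-z|}$ has modulus at most $1$. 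Unlike a test function built from $\Phi$ itself, there is no singularity when $x=z$. Hence $|\Psi(x,z)|\le C_1(k,n)$ uniformly in $x$ and $z$.

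\textbf{Bounding the data factor.} By the triangle inequality and the hypothesis,
\begin{align*}
\bigl\||u|+|u^\delta|\bigr\|_{L^2(\partial\Omega)} \le 2\|u(\cdot,d)\|_{L^2(\partial\Omega)} + \delta \le 2\|u(\cdot,d)\|_{L^2(\partial\Omega)}+1.
\end{align*}
This is the only place the assumption $\delta<1$ enters; it removes the quadratic term $\delta^2$. The norm $\|u(\cdot,d)\|_{L^2(\partial\Omega)}$ is finite because $u=u^{in}+u^{sc}$ with $u^{sc}$ smooth outside $\overline D$, by the Lippmann--Schwinger representation \eqref{eqn:lippmannschwringerqn} and well-posedness of the direct problem. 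It is independent of $z$ and $\delta$.

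Combining the three bounds gives \eqref{thm:stable} with $C=C_1\bigl(2\|u(\cdot,d)\|_{L^2(\partial\Omega)}+1\bigr)$. The main obstacle, mild as it is, is the uniform bound on $\Psi$ together with the observation that the constant may depend on $R_\Omega$, $k$, $d$ and $q$ but not on $z$ or $\delta$, which is what the statement requires. Since $|\partial\Omega|$ does not appear in this argument, no further care is needed on that point.
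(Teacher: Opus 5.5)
Your proof is correct. It shares the paper's skeleton: cancel the $|u^{in}|^2$ terms, factor $|u|^2-|u^\delta|^2=(|u|-|u^\delta|)(|u|+|u^\delta|)$, bound by $\delta\,(2\|u(\cdot,d)\|_{L^2(\partial\Omega)}+\delta)$, absorb $\delta^2$ using $\delta<1$, and take a supremum over $z$.

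Where you differ is in how the norms are distributed, and that difference matters. The paper pairs $\|\Psi(\cdot,z)\|_{L^2(\partial\Omega)}$ with the $L^2$ norm of the product $(|u|+|u^\delta|)(|u|-|u^\delta|)$. It then asserts that this product norm is at most $2\|u(\cdot,d)\|_{L^2(\partial\Omega)}\delta+\delta^2$, which does not follow from an $L^2$ noise hypothesis. The cross term would need $\|u(\cdot,d)\|_{L^\infty(\partial\Omega)}$ in place of the $L^2$ norm; that is harmless, since $u$ is continuous on $\partial\Omega$. The quadratic term is the real problem: its $L^2$ norm is $\||u|-|u^\delta|\|_{L^4(\partial\Omega)}^2$, which $\delta$ does not control.

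Your split avoids this. You put $\Psi$ in $L^\infty$ and apply Cauchy--Schwarz between the two $L^2$ factors. This gives the bound $\|\Psi(\cdot,z)\|_{L^\infty(\partial\Omega)}\,\delta\,(2\|u(\cdot,d)\|_{L^2(\partial\Omega)}+\delta)$ legitimately.

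The cost is that you need a pointwise bound on $\Psi$ that is uniform in $z$. You supply it correctly from $|J_0|,|J_1|,|j_0|,|j_1|\le 1$ and $|(x-z)\cdot\nu(x)|\le|x-z|$. The paper only implicitly assumes that $\sup_z\|\Psi(\cdot,z)\|_{L^2(\partial\Omega)}$ is finite when it defines $C$.

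In short, your argument is the rigorous version of the paper's.
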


\begin{proof}
For $z\in \Omega$, we have
\begin{align*}
    |\mathcal{I}(z,d)- \mathcal{I}^{\delta}(z,d) | 
    & = \left| \int_{\partial \Omega}  \Psi(x,z) \left[ \frac{|u(x,d)|^{2}-|u^{in}(x,d)|^{2}}{\overline{u^{in}(x,d)}} - \frac{|u^{\delta}(x,d)|^{2}-|u^{in}(x,d)|^{2}}{\overline{u^{in}(x,d)}} \right] dS(x)  \right| \\
    & \leq \left|  \int_{\partial \Omega} \Psi(x,z)  \frac{|u(x,d)|^{2}-|u^{\delta}(x,d)|^{2}}{\overline{u^{in}(x,d)}} dS(x) \right|  \\
    & =  \left|  \int_{\partial \Omega} \frac{\Psi(x,z) }{\overline{u^{in}(x,d)}}  \left[ |u(x,d)|^{2}-|u^{\delta}(x,d)|^{2} \right] dS(x) \right| \\
     & \leq \|\Psi(\cdot,z)\|_{L^{2}(\partial \Omega)} \left\| \left(2|u(\cdot,d)|+(|u^{\delta}(\cdot,d)|- |u(\cdot,d)|)\right)\left(|u(\cdot,d)|-|u^{\delta}(\cdot,d)| \right)\right\|_{L^{2}(\partial \Omega)} \\
     &\le \|\Psi(\cdot,z)\|_{L^{2}(\partial \Omega)}\left[ 2\|u(\cdot,d)\|_{L^{2}(\partial \Omega)}\delta + \delta^2 \right] \\ 
     &\le \|\Psi(\cdot,z)\|_{L^{2}(\partial \Omega)}\left[ 2\|u(\cdot,d)\|_{L^{2}(\partial \Omega)}+1 \right]\delta.
\end{align*}
The last inequality is obtain using $\delta<1$. The proof is concluded by defining
\begin{align*}
    C := \sup_{z \in \Omega} \left( \|\Psi(\cdot,z)\|_{L^{2}(\partial \Omega)}\left[ 2\|u(\cdot,d)\|_{L^{2}(\partial \Omega)}+1 \right] \right).
\end{align*}

\end{proof}

\section{A system of approximate model equations}\label{section3}
In this section, we construct a system of approximate model equations. These equations are motivated by the results obtained in Section \ref{section2} and are combined with their representation in the Fourier domain to improve computational efficiency.

The system utilizes the equations \eqref{eqn:lippmannschwringerqn} and \eqref{approxI}. For \eqref{approxI}, when the measurement radius $R_\Omega$ is sufficiently large, the term $\mathcal{O}\left(R_\Omega^{(1-n)/2}\right)$ can be neglected. Consequently, we obtain that
$$\mathcal{I}(z,d) \approx k^{2} \int_{D} \mathrm{Im}\Phi(z,y)q(y)u(y,d) dy.$$
We will treat this approximation as an equation in the subsequent implementation. 
For $d\in \mathbb{S}^{n-1}$ and $ x,z \in \Omega$, the following equations relate the unknown contrast $q$ and total field $u$
\begin{align*}
     u(x,d) & = u^{in}(x,d) + k^{2}\int_{\Omega} \Phi(x,y)q(y)u(y,d)dy,\qquad x\neq y, \\
     \mathcal{I}(z,d) & = k^{2} \int_{D} \mathrm{Im}\Phi(z,y)q(y)u(y,d) dy.
\end{align*}
Multiplying the first equation by $q$ and denoting $qu(\cdot,d)$ as $\omega(\cdot,d)$, we obtain the system of equations
\begin{align}\label{eq:stateeqn}
     \omega(x,d)  & = q(x) u^{in}(x,d) + k^{2}q(x)\int_{\Omega} \Phi(x,y)\omega(y,d) dy,\qquad x\neq y,  \\ \label{eq:dataeqn}
     \mathcal{I}(z,d) & = k^{2} \int_{D} \mathrm{Im}\Phi(z,y)\omega(z,d)  dy.
\end{align}
The equations \eqref{eq:stateeqn} and \eqref{eq:dataeqn} will be referred to as the state equation and the data equation, respectively.

\begin{remark}
  The system \eqref{eq:stateeqn}-\eqref{eq:dataeqn} can be adopted as the model system for the inverse scattering problem with phased boundary measurement data. In this setting, the corresponding imaging function is given by $\widetilde{\mathcal{I}}_d$, so \eqref{eq:dataeqn} holds as an exact equation. Consequently, in the presence of phased data, the system constitutes an exact formulation rather than an approximation, particularly with respect to the data equation.
\end{remark}
The system of state - data equations \eqref{eq:stateeqn}-\eqref{eq:dataeqn} can be considered as a set of model equations. However, direct discretization of this equation generally results in a large number of parameters, especially in three dimensions.  To improve computational efficiency, the equation is transformed into the spectral domain, with functions expressed through their Fourier coefficients. In this setting, the dominant behavior of the solution is often captured by a small subset of Fourier modes, allowing for a reduced computational cost without compromising essential characteristics. Moreover, turning the state equation allows us to avoid the singularity of the Green's function $\Phi$ in the state equation.
To this end, we employ a periodization technique for equations \eqref{eq:stateeqn} and \eqref{eq:dataeqn}. An additional computational advantage of this approach is that it transforms the equations into algebraic relations in the Fourier domain, where the Fourier coefficients of the periodized kernels can be computed explicitly. This periodization technique is motivated by the work in \cite{vainikko2000fast}.


Let ${B_0(\rho)}$ be the ball with radius $\rho$ that is centered at the origin such that $\text{supp}(q)\subset \overline{{B_0(\rho)}}$. We define
$$\mathcal{P} (x) = \begin{cases}
    \Phi(x),\hspace{0.7 cm} |x|\le R, \\ \notag
    0,\hspace{1.4 cm} x\in G_R\setminus \overline{B_0(R)}, 
\end{cases}\qquad R\ge 2\rho,$$
where $G_R$ is the open cube centered at the origin with side length $2R$,
$$G_R=\{x=(x_1,\ldots,x_n)\in \R^n: |x_\ell|<R, \, \ell=1,\ldots,n \}.$$
Then we extend  $\mathcal{P}$ from $G_R$ to $\R^n$ as a $2R$-periodic function. The contrast $q$ and contrast source function $\omega$ are also periodized in the same way, obtaining $q_{\text{\text{per}}}$  and $\omega_{\text{\text{per}}}$, respectively. By this, the periodized version of the system of equations \eqref{eq:stateeqn}-\eqref{eq:dataeqn} is given by
\begin{align*}
    \omega_{\text{\text{per}}}(x,d)  & = q_{\text{\text{per}}}(x) u^{in}(x,d) + k^{2}q_{\text{\text{per}}}(x)\int_{G_{R}} \mathcal{P}(x-y)\omega_{\text{\text{per}}}(x,d) dy, \qquad x\neq y, \\ 
    \mathcal{I}(z,d) & = k^{2} \int_{G_{R}} \mathcal{K}(z-y)\omega_{\text{\text{per}}}(y,d)  dy,
\end{align*}
where $x,z \in \Omega$ and $\mathcal{K} = \mathrm{Im}(\mathcal{P})$. 
We next reformulate this system in Fourier domain. First,
recall that the trigonometric orthonormal basis of $L^2(G_R)$ is given by
$$\varphi_j(x)=(2R)^{-n/2} e^{i\pi j\cdot\frac{x}{R}},\qquad j=(j_1,\ldots,j_n)\in \Z^n.$$
For $g\in L^2(G_R)$ and $j\in \Z^n$, the Fourier coefficients of $g$ are given by
$$\widehat{g}(j)=\int_{G_R} g(x)\overline{\varphi_j(x)}dx.$$
Now, observe that  the right-hand sides of the equations exhibit a convolution structure. From this, we derive the \textbf{set of model equations}
\begin{align} \label{eq:perstateeqn_model}
\mathcal{F}^{-1}[\widehat{\omega}_{\text{per}}(j,d)](x,d) 
&= q_{\text{per}}(x)\, u^{in}(x,d)
   + k^{2} q_{\text{per}}(x)(2R)^{n/2}\mathcal{F}^{-1}\!\left[\, \widehat{\mathcal{P}}(j)\, \widehat{\omega}_{\text{per}}(j,d)\right](x),
\qquad x \in \Omega. 
\\ \label{eq:perdataeqn_model}
\mathcal{I}(z,d)
&= k^{2}\, \,(2R)^{n/2}\mathcal{F}^{-1}\!\left[\, \widehat{\mathcal{K}}(j)\, \widehat{\omega}_{\text{per}}(j,d)\right](z),
\hspace{4.9cm} z \in B_{0}(\rho). 
\end{align}
where $\mathcal{F}$ is the Fourier transform.
To use this system of model equations, we need explicit Fourier coefficients of $\mathcal{K}$ and $\mathcal{P}$ and their behavior, which we now present. 
 

\begin{theorem}\label{thm:Fcoef_K} For $j\in \Z^n$,
    the Fourier coefficients of $\K$ are given as follows.
    \begin{itemize}
        \item For $n=2$,
        \begin{equation}\notag
            \widehat{\K}(j) = \begin{cases}
        \dfrac{\pi R}{8} \left[J_0^2(kR)+J_1^2(kR)\right], &  \pi |j|= kR, \\ 
            -\dfrac{1}{4}\cdot\dfrac{\pi R}{k^2R^2-\pi^2|j|^2}\left[\pi|j|J_0(kR)J_1(\pi|j|)-kR J_1(kR)J_0(\pi|j|)\right],& \pi |j|\neq kR.
        \end{cases} 
        \end{equation}
       
        \item For $n=3$,
        \begin{equation}\notag
            \widehat{\K}(j) = \begin{cases}
        (2R)^{-3/2}\dfrac{ R^2}{2}\left\{ kR \left[j_0^2(kR)+j_1^2(kR)\right]-j_0(kR)j_1(kR)\right\}, &  \pi |j|= kR, \\
           kR(2R)^{-3/2}\dfrac{R^2}{k^2R^2-\pi^2|j|^2}\left[kRj_0(\pi |j|)j_1(kR)-\pi|j|j_0(kR)j_1(\pi|j|)\right],&\pi |j|\neq kR,|j|\neq 0,\\ 
           \dfrac{1}{k}(2R)^{-3/2}\left(\dfrac{1}{k}\sin(kR)-R\cos(kR) \right),& |j|=0.
        \end{cases} 
        \end{equation}
    \end{itemize}
     Consequently, as $|j|\to \infty$, 
    \begin{equation}\notag
        \widehat{\K}(j) =  \begin{cases}
    \mathcal{O}\left(|j|^{-3/2}\right), & \text{if }  n=2,\\ 
    \mathcal{O}\left(|j|^{-2}\right), &\text{if }  n=3.
\end{cases}
    \end{equation}
Furthermore, for all $j\in \Z^n$ such that $\pi|j|\ge kR \ge 1$, we have $|\widehat{\K}(j)|\le \mu,$ where
$$\mu = \begin{cases}
    \frac{\pi R}{8} \left[J_0^2(kR)+J_1^2(kR)\right], & \text{if } n=2, \\ \notag
    (2R)^{-3/2}\frac{ R^2}{2}\left[ kR [j_0^2(kR)+j_1^2(kR)]-j_0(kR)j_1(kR)\right],& \text{if } n=3.
\end{cases}$$
\end{theorem}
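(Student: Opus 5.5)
The plan is to compute $\widehat{\K}(j)=\int_{G_R}\K(x)\overline{\varphi_j(x)}\,dx$ directly. Since $\K=\operatorname{Im}\mathcal{P}$ vanishes on $G_R\setminus\overline{B_0(R)}$ and equals the radial function $\operatorname{Im}\Phi(x)$ on $B_0(R)$, we have
\[
\widehat{\K}(j)=(2R)^{-n/2}\int_{B_0(R)}\operatorname{Im}\Phi(x)\,e^{-i\pi j\cdot x/R}\,dx .
\]
We set $\xi=\pi|j|/R$ and use polar or spherical coordinates to reduce the angular integral of the plane wave. In two dimensions $\int_0^{2\pi}e^{-i\xi r\cos\theta}\,d\theta=2\pi J_0(\xi r)$. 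In three dimensions the angular integral gives $4\pi j_0(\xi r)$. This turns $\widehat{\K}(j)$ into a one-dimensional Lommel-type integral:
\[
\frac{\pi}{2}(2R)^{-1}\int_0^R J_0(kr)J_0(\xi r)\,r\,dr \quad (n=2),\qquad k(2R)^{-3/2}\int_0^R j_0(kr)j_0(\xi r)\,r^2\,dr \quad (n=3).
\]
The $n=2$ prefactor should be checked against the stated normalization $\pi R/8$. The printed formula for $n=2$ seems to absorb $(2R)^{-1}$ together with the rescaling $r=Rt$, so I will track constants carefully and reconcile them at the end.

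\textbf{Evaluating the Lommel integrals.} For $\xi\neq k$ I use the standard identity
\[
\int_0^R J_0(ar)J_0(br)\,r\,dr=\frac{R\,[\,aJ_1(aR)J_0(bR)-bJ_0(aR)J_1(bR)\,]}{a^2-b^2},
\]
which follows by multiplying the two Bessel ODEs and integrating by parts. Substituting $a=k$ and $b=\xi$, so that $bR=\pi|j|$, and multiplying numerator and denominator by $R^2$ gives the displayed $\pi|j|\neq kR$ formula. For $\xi=k$ I use $\int_0^R J_0^2(kr)\,r\,dr=\tfrac{R^2}{2}\,[J_0^2(kR)+J_1^2(kR)]$. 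The three-dimensional case is handled the same way, either with the spherical Bessel analogue of the Lommel identity (or equivalently $j_0(t)=\sin t/t$, reducing to elementary integrals of $\sin(kr)\sin(\xi r)$), or with $\int_0^R j_0^2(kr)\,r^2\,dr=\tfrac{R^3}{2}\,[j_0^2+j_1^2-j_0j_1/(kR)]$ at $\xi=k$. The case $j=0$ is the elementary integral $\int_0^R \tfrac{\sin kr}{k}\,r\,dr$, which produces $\tfrac{1}{k}\bigl(\tfrac{\sin kR}{k}-R\cos kR\bigr)$.

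\textbf{Asymptotic decay.} For fixed $kR$, the $\pi|j|\neq kR$ expression has a denominator of order $|j|^2$. In two dimensions the numerator is dominated by $\pi|j|J_1(\pi|j|)=\mathcal{O}(|j|^{1/2})$, since $J_\nu(t)=\mathcal{O}(t^{-1/2})$, which gives $\mathcal{O}(|j|^{-3/2})$. In three dimensions $\pi|j|\,j_1(\pi|j|)=\mathcal{O}(1)$ and $j_0(\pi|j|)=\mathcal{O}(|j|^{-1})$, so the numerator is $\mathcal{O}(1)$ and the decay is $\mathcal{O}(|j|^{-2})$.

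\textbf{The uniform bound by $\mu$.} This is the step I expect to be the main obstacle. Viewing the integral as a function of $\xi$, Cauchy--Schwarz gives
\[
\Bigl|\int_0^R J_0(kr)J_0(\xi r)\,r\,dr\Bigr|\le \Bigl(\int_0^R J_0^2(kr)\,r\,dr\Bigr)^{1/2}\Bigl(\int_0^R J_0^2(\xi r)\,r\,dr\Bigr)^{1/2}.
\]
The first factor is exactly the diagonal value. It therefore suffices to show that $g(s):=\int_0^1 J_0^2(st)\,t\,dt=\tfrac12\,[J_0^2(s)+J_1^2(s)]$ is nonincreasing for $s\ge 1$. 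Differentiating with $J_0'=-J_1$ and $J_1'=J_0-J_1/s$ gives $g'(s)=-J_1^2(s)/s\le 0$, which is nonincreasing for all $s>0$. The analogous computation for $j_0$ in three dimensions, $h(s)=\tfrac12\,[j_0^2+j_1^2-j_0j_1/s]$, should similarly yield $h'(s)=-j_1^2(s)/s\le0$ or a similar sign-definite expression. The hypothesis $kR\ge1$ is presumably needed there to guarantee that $h$ is positive and monotone on $[kR,\infty)$. Then for $\xi\ge k$ the product of square roots is at most the diagonal value, which is $\mu$.
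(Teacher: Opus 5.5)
The paper gives no argument for this theorem; it defers to \cite{submitted}. So there is no route to compare against, and your proposal has to stand on its own.

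\textbf{What is correct.} The explicit formulas and the decay rates are right. Your radial reductions, the Lommel identity and the diagonal integrals reproduce every displayed case, including $j=0$ in both dimensions. No reconciliation of constants is needed for $n=2$, since $\frac{\pi}{2}\cdot\frac{1}{2R}\cdot\frac{R^2}{2}=\frac{\pi R}{8}$ exactly. There is one bookkeeping slip for $n=3$, $j=0$: the radial integral $\int_0^R \frac{\sin kr}{k}\,r\,dr$ equals $\frac{1}{k^2}\bigl(\frac{\sin kR}{k}-R\cos kR\bigr)$, and only after multiplying by the prefactor $k(2R)^{-3/2}$ do you get the stated value. Your Cauchy--Schwarz reduction of the bound by $\mu$ to monotonicity of the diagonal quantity is sound. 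For $n=2$ it is complete, since $g'(s)=-J_1^2(s)/s\le 0$ is correct.

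\textbf{The gap: monotonicity of $h$ for $n=3$.} You only guess this step, and the guessed formula $h'(s)=-j_1^2(s)/s$ is false. Since
\[
h(s)=\int_0^1 j_0^2(st)\,t^2\,dt=s^{-3}\int_0^s\sin^2u\,du=\frac{1}{2s^2}-\frac{\sin 2s}{4s^3},
\]
one gets
\[
h'(s)=\frac{1}{s}\bigl[j_0^2(s)-3h(s)\bigr]=\frac{3\sin 2s-2s\,(2+\cos 2s)}{4s^4}.
\]
This is not sign-definite by inspection. Its non-positivity is the classical inequality $3\sin u\le u(2+\cos u)$ for $u\ge 0$, applied with $u=2s$. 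To prove it, set $\phi(u)=u-\frac{3\sin u}{2+\cos u}$. Then $\phi(0)=0$ and $\phi'(u)=\frac{(1-\cos u)^2}{(2+\cos u)^2}\ge 0$, so $\phi\ge 0$, and multiplying by $2+\cos u>0$ gives the inequality. Equivalently, the recurrence $j_0+j_2=\frac{3}{s}j_1$ gives $h'(s)=-\frac{1}{2s}\bigl[3j_1^2(s)-j_0(s)j_2(s)\bigr]$, which is $\le 0$ by the Tur\'an inequality $j_1^2\ge j_0 j_2$.

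\textbf{The hypothesis $kR\ge 1$.} With this step filled in, $h$ is nonincreasing on all of $(0,\infty)$. Also $h>0$ trivially, being the integral of a square. So your speculation that $kR\ge 1$ is needed for positivity or monotonicity of $h$ is misplaced. Once the step is supplied, your argument gives $|\widehat{\mathcal{K}}(j)|\le\mu$ for every $j$ with $\pi|j|\ge kR$, in both dimensions, with no lower bound on $kR$ at all.
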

\begin{proof}
    The proof of this theorem can be found in \cite{submitted} and is therefore omitted here.
\end{proof}
Besides providing an explicit representation of $\widehat{\K}(j)$, the theorem indicates that $|\widehat{\K}(j)|$ attains its maximum value on the ring $|j|=kR/\pi$. Moreover, $|\widehat{\K}(j)|$ decays sharply as $|j|$ moves away from that ring. As shown in Figure \ref{fig:kernel-K-hat}, the dominant coefficients of $|\widehat{\K}(j)|$ are concentrated along the ring $|j|=kR/\pi$, where they attain significantly larger magnitudes, while rapidly decreasing away from it.
\begin{figure}[H]
    \centering
    \begin{subfigure}[b]{0.3\textwidth}
            \includegraphics[width=1.0\linewidth]{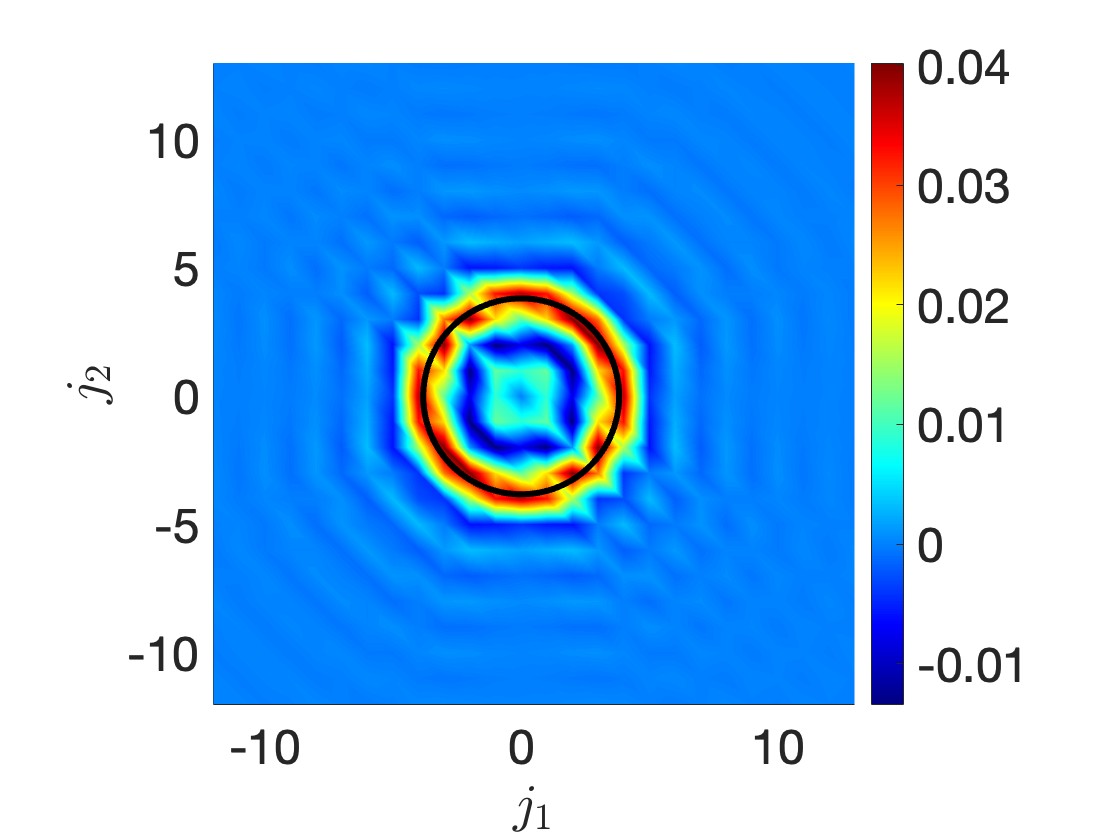}
            \caption{$k=6$}
        \end{subfigure}
    \begin{subfigure}[b]{0.3\textwidth}
            \includegraphics[width=1.0\linewidth]{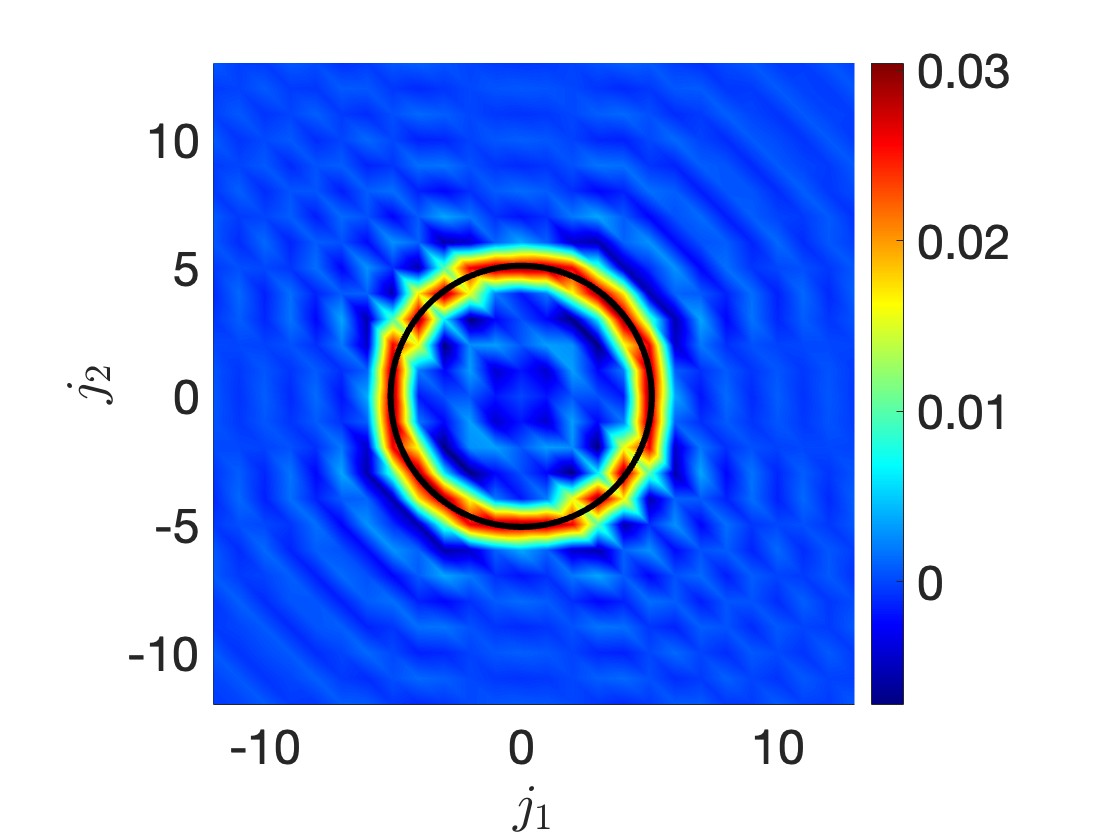}
            \caption{$k=8$}
        \end{subfigure}
   \begin{subfigure}[b]{0.3\textwidth}
            \includegraphics[width=1.0\linewidth]{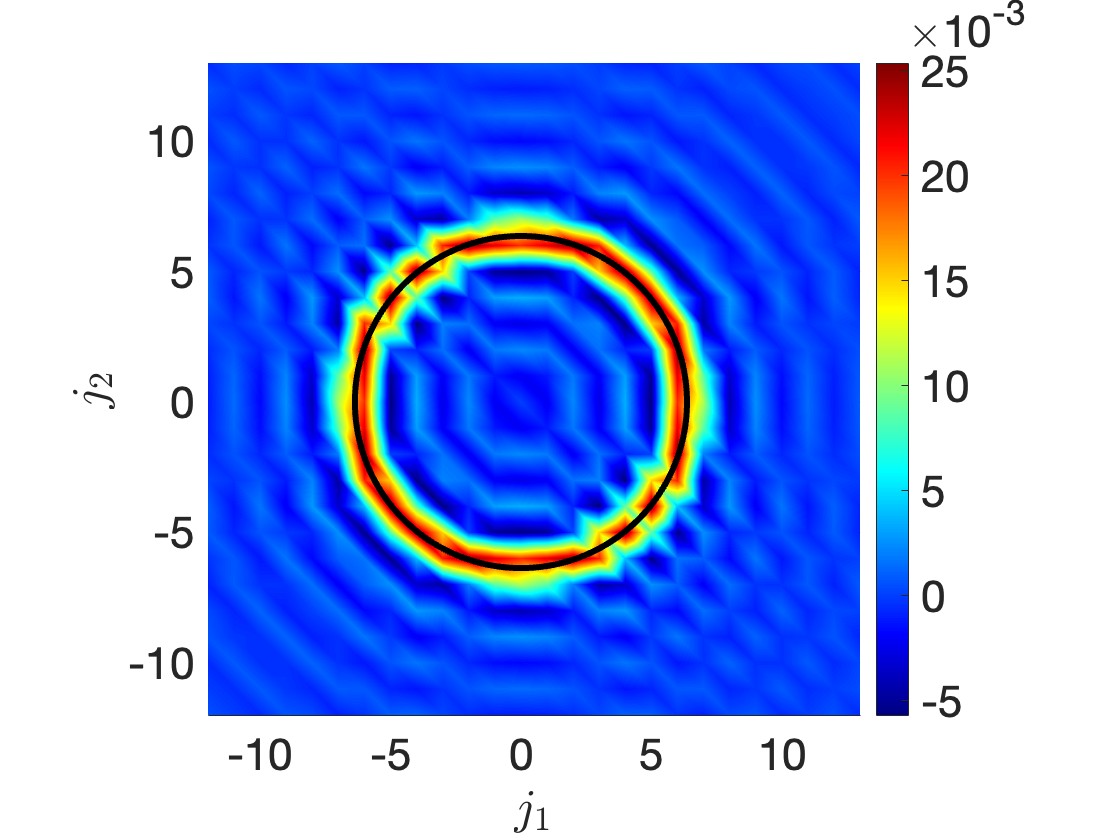}
            \caption{$k=10$}
        \end{subfigure}
    \caption{$\left| \widehat{\mathcal{K}} \right|$ with the circle $|j|=kR/\pi$ (black ring) for $n=2$
    }\label{fig:kernel-K-hat}

\end{figure}

The following theorem presents a detailed analysis of Fourier coefficients $\widehat{\mathcal{P}}$. 

\begin{theorem}\label{thm:Fcoef}
     For $j\in \Z^n$, the Fourier coefficients of $\mathcal{P}$ are given as follows.
    \begin{itemize}
        \item For $n=2$,
        \begin{equation}\label{Fco:n2}
 \widehat{\mathcal{P}}(j) = \begin{cases}
        \dfrac{i\pi R}{8}\left[ H_0^{(1)}(kR) J_0(kR)+ H_1^{(1)}(kR)J_1(kR) \right], \hspace{4.6 cm}  \pi |j|= kR, \\ 
           \dfrac{R}{2(k^2R^2-\pi^2|j|^2)}\left\{-1+\dfrac{i\pi}{2}\left[-\pi |j| H_0^{(1)}(kR) J_1(\pi|j|)+kR H_1^{(1)}(kR)J_0(\pi|j|) \right]\right\},\\ 
           \hspace{12 cm}\pi |j|\neq kR, j\neq 0, \\ 
           \dfrac{-1}{2k^2 R} + \dfrac{i\pi}{4k}H_1^{(1)}(kR), \hspace{8.4 cm} j=0.
        \end{cases} 
        \end{equation}

        \item For $n=3$,
        \begin{equation}\label{Fco:n3}
            \widehat{\mathcal{P}}(j) = \begin{cases}
        \dfrac{i R}{2k}(2R)^{-3/2}\left[ 1-\dfrac{e^{ikR}}{kR}\sin(kR) \right], & \pi |j|= kR, \\
           \dfrac{R^2}{k^2R^2-\pi^2|j|^2}(2R)^{-3/2}\left\{-1+e^{ikR}\left[ \cos(\pi|j|)-\dfrac{ikR}{\pi|j|}\sin(\pi|j|) \right]\right\},& \pi |j|\neq kR,|j|\neq 0,\\ 
            \dfrac{1}{k^2}(2R)^{-3/2}\left( -1+e^{ikR}(1-ikR)\right),& j=0.
        \end{cases} 
        \end{equation}
    \end{itemize}
    Moreover, if $\pi|j|>kR\ge 1,$ then we have 
    $$|\widehat{\mathcal{P}}(j)| \le \begin{cases}
       \dfrac{C(\pi |j|)^{1/2}}{\pi^2|j|^2 - k^2R^2} ,&\text{if } n=2, \\ \notag
        \dfrac{R^2}{\pi^2|j|^2 -k^2 R^2}\cdot\dfrac{3}{(2R)^{3/2}},&\text{if } n=3.
    \end{cases}$$
    where $C>0$.
    Consequently, as $|j|\to \infty$,
    \begin{equation}\label{assp:P}
        |\widehat{\mathcal{P}}(j)| = \begin{cases}
    \mathcal{O}\left(|j|^{-3/2}\right), & \text{if }  n=2,\\ 
    \mathcal{O}\left(|j|^{-2}\right),&\text{if }  n=3.
    \end{cases}
    \end{equation}
\end{theorem}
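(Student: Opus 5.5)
The computation reduces $\widehat{\mathcal P}(j)$ to a one-dimensional radial integral, evaluated in closed form with Bessel-function identities; the decay bounds then follow from the standard asymptotics of those functions. Since $\mathcal P$ vanishes on $G_R\setminus\overline{B_0(R)}$,
$$\widehat{\mathcal P}(j)=(2R)^{-n/2}\int_{B_0(R)}\Phi(x)e^{-i\pi j\cdot x/R}\,dx .$$
Both $\Phi$ and the ball are radial, so the angular integral is the usual Fourier transform of the sphere. Write $\xi=\pi|j|/R$.

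\textbf{Case $n=2$.} The angular integral gives $2\pi J_0(\xi r)$, so $\widehat{\mathcal P}(j)$ is a multiple of
$$\frac{i}{4}\int_0^R H_0^{(1)}(kr)J_0(\xi r)\,r\,dr .$$
For $\xi\neq k$ I use the Lommel-type antiderivative. Both $Z_1(r)=H_0^{(1)}(kr)$ and $Z_2(r)=J_0(\xi r)$ satisfy Bessel equations of order zero, and
$$(k^2-\xi^2)\,r\,Z_1Z_2=\frac{d}{dr}\bigl[r\,(Z_1Z_2'-Z_1'Z_2)\bigr].$$
At $r=R$ this produces the bracket $-\pi|j|H_0^{(1)}(kR)J_1(\pi|j|)+kRH_1^{(1)}(kR)J_0(\pi|j|)$ after multiplying through by $R$. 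The point $r=0$ needs care because $H_0^{(1)}$ has a logarithmic singularity. Only the $Y_0$ part contributes: $r\,Y_0'(kr)\,k\to 2/\pi$. This limit yields the constant $-1$ in the formula.

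The resonant case $\xi=k$ follows from the companion identity
$$\int_0^R Z_1(kr)Z_2(kr)\,r\,dr=\tfrac{R^2}{2}\bigl[Z_1Z_2+Z_1'Z_2'\bigr](kR)+\text{(boundary term at }0\text{)}.$$
One checks that the boundary contribution at $0$ cancels with the real part of the expression. Alternatively, take the limit $\pi|j|\to kR$ with L'Hôpital, using $H_1J_0-H_0J_1$ Wronskian relations. The case $j=0$ uses $\int rH_0^{(1)}=rH_1^{(1)}/k$ together with the limit at $0$, which gives $-1/(2k^2R)$.

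\textbf{Case $n=3$.} The angular integral gives $4\pi\sin(\xi r)/(\xi r)$. The integral reduces to the elementary one
$$\frac{1}{\xi}\int_0^R e^{ikr}\sin(\xi r)\,dr .$$
Expanding $\sin$ into exponentials and integrating gives the stated expression with $-1+e^{ikR}[\cos(\pi|j|)-\tfrac{ikR}{\pi|j|}\sin(\pi|j|)]$. The cases $\xi=k$ and $j=0$ are handled the same way, or as limits.

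\textbf{Bounds for $\pi|j|>kR\ge1$.} For $n=3$, take absolute values in the curly bracket: $|-1|+|\cos|+\tfrac{kR}{\pi|j|}|\sin|\le3$. For $n=2$, use $|J_\nu(t)|\le C t^{-1/2}$ for $t\ge1$. This gives
$$\pi|j|\,|J_1(\pi|j|)|\le C(\pi|j|)^{1/2}\quad\text{and}\quad kR\,|J_0(\pi|j|)|\le kR\,C(\pi|j|)^{-1/2}\le C(\pi|j|)^{1/2}.$$
The factors $H_\nu^{(1)}(kR)$ are fixed constants, and the constant $1$ is absorbed since $(\pi|j|)^{1/2}\ge1$. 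The asymptotics \eqref{assp:P} then follow: the denominator is of order $|j|^2$, while the numerator is $\mathcal O(|j|^{1/2})$ for $n=2$ and $\mathcal O(1)$ for $n=3$.

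\textbf{Main obstacle.} I expect the hardest step to be the careful treatment of the singular endpoint $r=0$ for $n=2$, together with the resonant case $\pi|j|=kR$. Getting the exact constants right there, including the real $-1$ term and the factor $\pi R/8$, requires precise small-argument asymptotics of $Y_0$ and $Y_1$. I would cross-check them against the $\widehat{\mathcal K}$ formulas of Theorem~\ref{thm:Fcoef_K}, which must coincide with $\operatorname{Im}\widehat{\mathcal P}$ up to normalization.
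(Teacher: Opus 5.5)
Your proposal is correct. For the decay estimates and the asymptotics \eqref{assp:P} you follow the paper's argument essentially line for line. For $n=3$ you use the triangle inequality on the curly bracket. For $n=2$ you use $|J_\nu(t)|\le Ct^{-1/2}$ with $H^{(1)}_\nu(kR)$ treated as fixed constants. The paper's text speaks of boundedness of $H_\alpha^{(1)}(\pi|j|)$, but what is actually needed is your observation that $H_\alpha^{(1)}(kR)$ does not depend on $j$; in both versions $C$ depends on $k$ and $R$.

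Where you go beyond the paper is the closed forms. The paper does not derive them and simply refers to Vainikko. You derive them from the radial reduction, e.g.\ $\widehat{\mathcal P}(j)=\frac{i\pi}{4R}\int_0^R H_0^{(1)}(kr)J_0(\xi r)\,r\,dr$ for $n=2$, together with the Lommel--Wronskian identity, and from elementary exponential integrals for $n=3$. These derivations check out:
\begin{itemize}
\item the lower-endpoint limit $-rZ_1'Z_2\to -2i/\pi$ produces exactly the $-1$;
\item the antiderivative $rH_1^{(1)}(kr)/k$, together with $tH_1^{(1)}(t)\to -2i/\pi$, gives the $j=0$ value;
\item the $n=3$ integrals reproduce all three cases.
\end{itemize}
Your route gives a self-contained verification, at the cost of length. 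Your proposed cross-check with Theorem~\ref{thm:Fcoef_K} via $\operatorname{Im}$ is legitimate, because $\mathcal P$ is even and so $\widehat{\operatorname{Im}\mathcal P}=\operatorname{Im}\widehat{\mathcal P}$.

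One correction in the resonant case $\pi|j|=kR$ for $n=2$. The boundary term at $0$ in $\frac{r^2}{2}\bigl[H_0^{(1)}J_0+H_1^{(1)}J_1\bigr](kr)$ does not cancel against anything; it is simply zero. As $t\to 0$ we have $H_0^{(1)}(t)J_0(t)=O(|\ln t|)$ and $H_1^{(1)}(t)J_1(t)\to -i/\pi$, so both are killed by the factor $r^2$. The stated value $\frac{i\pi R}{8}\bigl[H_0^{(1)}J_0+H_1^{(1)}J_1\bigr](kR)$ then comes out directly. Its real part $-\frac{\pi R}{8}\bigl(J_0Y_0+J_1Y_1\bigr)(kR)$ is in general nonzero, so there is no ``real part'' being cancelled.

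Your L'H\^opital alternative also works. By the Wronskian $J_1Y_0-J_0Y_1=2/(\pi t)$, the numerator of the non-resonant formula vanishes at $\pi|j|=kR$. The ratio of derivatives then gives the same value. Taking this limit is justified because the radial integral is continuous in $\xi$.
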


\begin{proof}
 The derivation of Fourier coefficients $\widehat{\mathcal{P}}$ can be found in \cite{vainikko2000fast}. 
Regarding the boundedness of $|\widehat{\mathcal{P}}(j)| $, assume that
 $\pi|j|>kR\ge 1$. When $n=2$, since 
 $|H_\alpha^{(1)}(\pi|j|)|$ is  bounded
 when $\pi|j|\ge 1$ and  $|J_\alpha(\pi|j|)|=\mathcal{O}(\pi|j|^{-1/2})$ as $|j|\to \infty$ for $\alpha=0,1$,
 we have
      \begin{align}
        \notag
        |\widehat{\mathcal{P}}(j)|& =  \dfrac{R}{2(\pi^2|j|^2 - k^2R^2)} \left|-1 + \dfrac{i\pi}{2}\left[-\pi|j|H_0^{(1)}(kR)J_1(\pi|j|) + kRH_1^{(1)}(kR)J_0(\pi|j|) \right] \right| \\ \notag
        & \le \dfrac{R}{2(\pi^2|j|^2 - k^2R^2)}  \left\{ 1+\dfrac{\pi}{2}\left[\pi|j||H_0^{(1)}(kR)||J_1(\pi|j|)| + kR|H_1^{(1)}(kR)||J_0(\pi|j|)| \right]\right\} \\ \notag
        & \le \dfrac{R}{2(\pi^2|j|^2 - k^2R^2)} \left[1+C'((\pi|j|)^{1/2}+(\pi|j|)^{-1/2})\right]\\ \notag
        &\le \dfrac{C(\pi|j|)^{1/2}}{\pi^2|j|^2 - k^2R^2}. 
    \end{align}
    where $C',C>0$.
    Similarly, when $n=3$,
    \begin{align}
        \notag
        |\widehat{\mathcal{P}}(j)|& = \dfrac{R^2}{\pi^2|j|^2 - k^2R^2} \dfrac{1}{(2R)^{3/2}}\left| -1 + e^{ikR}\left[\cos(\pi |j|) - \dfrac{ikR}{\pi|j|}\sin(\pi|j|)\right]\right| \\ \notag
        & \le \dfrac{R^2}{\pi^2|j|^2 - k^2R^2} \dfrac{1}{(2R)^{3/2}}\left\{ 1 + \left[|\cos(\pi |j|)| + \dfrac{kR}{\pi|j|}|\sin(\pi|j|)|\right]\right\} \\ \notag  
        & \le \dfrac{R^2}{\pi^2|j|^2 - k^2R^2}\cdot \dfrac{3}{(2R)^{3/2}}.
    \end{align}
The asymptotic behavior of $|\widehat{\mathcal{P}}(j)|$ in \eqref{assp:P} can be directly deduced from the estimates above.
\end{proof}

The theorem above demonstrates that $|\widehat{\mathcal{P}}(j)|$ decays rapidly, with a rate of with the rate of $|j|^{-3/2}$ for $n=2$ and of $|j|^{-2}$ for $n=3$ as $|j|\to\infty$. Although the maximum value of $|\widehat{\mathcal{P}}(j)|$ is not established in this work, we are able to indicate that $|\widehat{\mathcal{P}}(j)|$ can still be controlled by a strictly decreasing upper bound, despite its oscillatory behavior arising from the properties of  Bessel functions. Furthermore, these bounds attain relatively large values in a neighborhood of the ring $|j| = kR/\pi$. Hence, it is reasonable to expect that $|\widehat{\mathcal{P}}(j)|$ attains large values when $|j| \approx kR/\pi$. This behavior is illustrated in Figure \ref{fig:kernel-hat}.

\begin{figure}[H]
    \centering
    \begin{subfigure}[b]{0.3\textwidth}
            \includegraphics[width=1.0\linewidth]{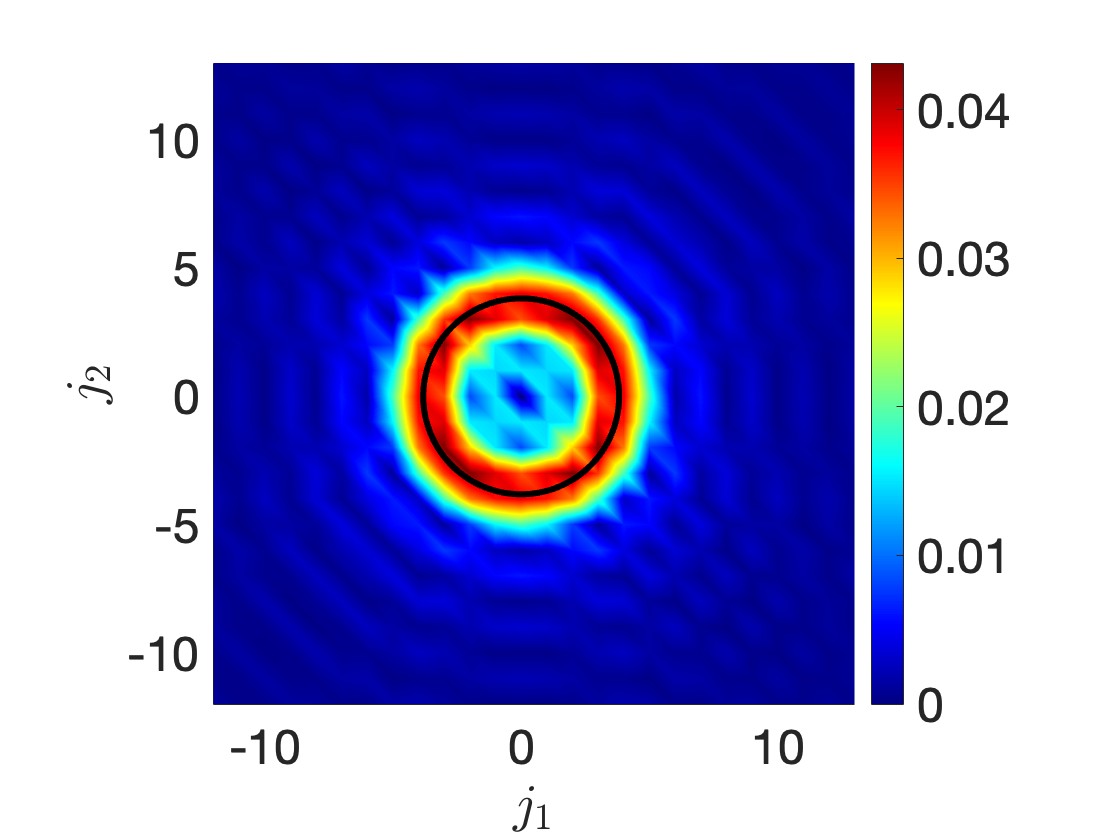}
            \caption{$k=6$}
        \end{subfigure}
    \begin{subfigure}[b]{0.3\textwidth}
            \includegraphics[width=1.0\linewidth]{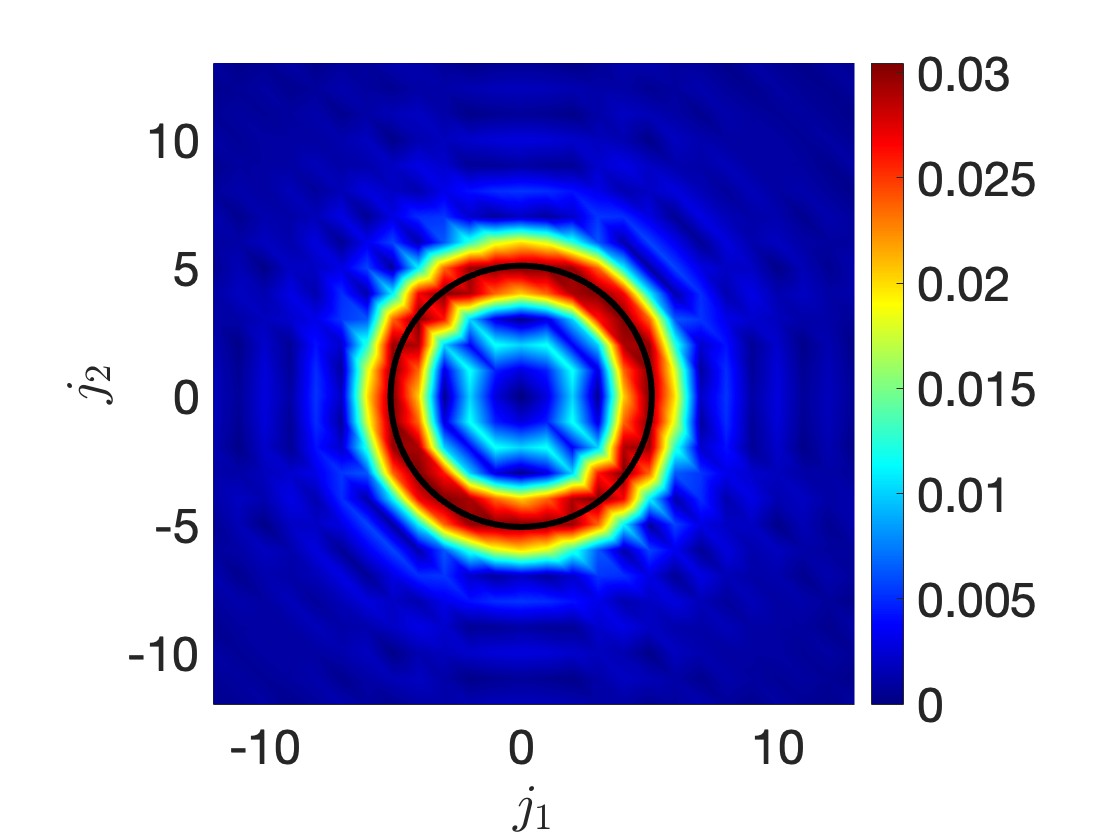}
            \caption{$k=8$}
        \end{subfigure}
   \begin{subfigure}[b]{0.3\textwidth}
            \includegraphics[width=1.0\linewidth]{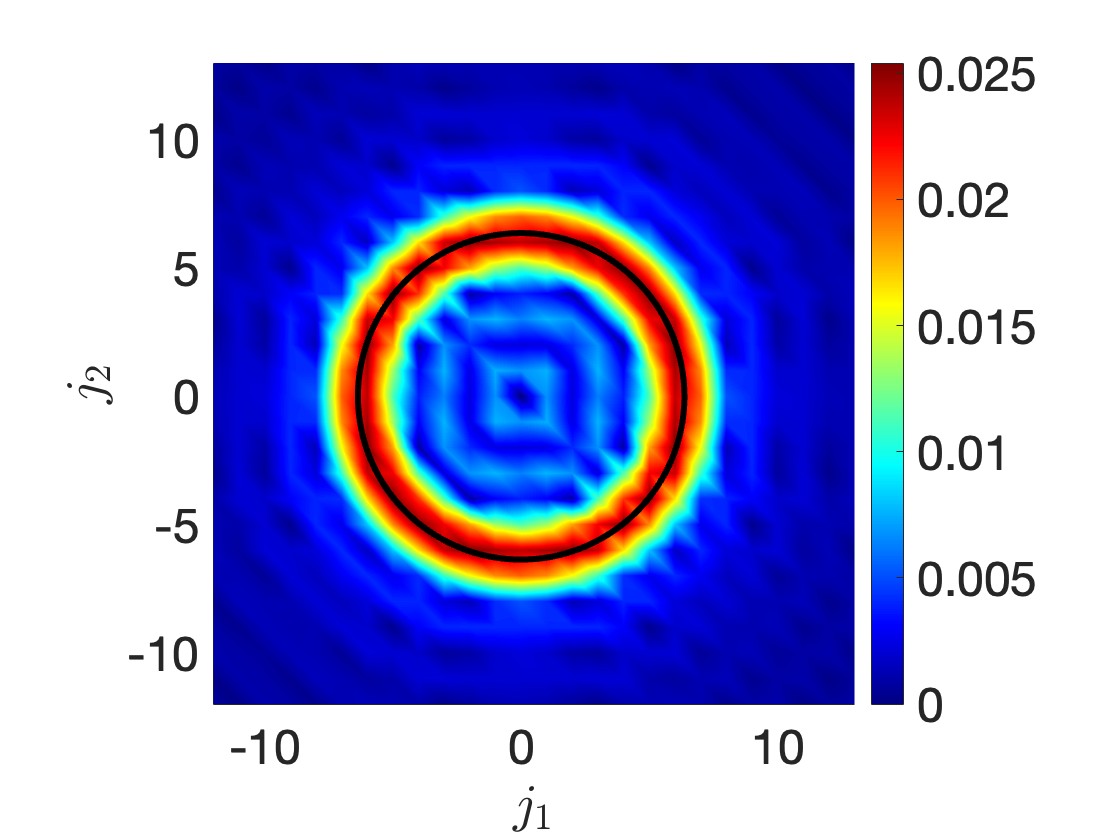}
            \caption{$k=10$}
        \end{subfigure}
    \caption{$\left| \widehat{\mathcal{P}}\right|$ with the circle $|j|=kR/\pi$ (black ring) for $n=2$
    }\label{fig:kernel-hat}

\end{figure}

The characterization of the dominant coefficients of both $\widehat{\mathcal{P}}(j)$ and $\widehat{\mathcal{K}}(j)$ informs the selection of an appropriate truncation level in the Fourier series representation when implementing the system \eqref{eq:perstateeqn_model}-\eqref{eq:perdataeqn_model}. This provides a lower bound reduction in the number of Fourier coefficients required by the model to capture the dominant coefficients of $\omega$, and hence plays a crucial role in improving the computational efficiency of the neural network training procedure, as discussed in the next section. In particular, we truncate the series to the index set 
$j \in [-J+1,J]^n \subset \Z^n$, 
where $J$ is taken larger but not too far from $kR/\pi$.

\section{Model-based deep learning algorithm}\label{section4}


In this section, we present  a formulation of  the proposed model-informed deep learning algorithm. The central principle of this approach is the optimization of parametric hypothesis spaces governed by the system of equations \eqref{eq:perstateeqn_model}-\eqref{eq:perdataeqn_model}. To  formulate the optimization problem, we specify the hypothesis spaces $\mathcal{H}
$ and $\widehat{\mathcal{H}}$, along with the corresponding objective function.


This optimization problem requires two hypothesis spaces: the first hypothesis space $\widehat{\mathcal{H}}$ approximates the Fourier coefficients of the contrast source function $\widehat{\omega}_{\Phi}$, and the second hypothesis space $\mathcal{H}$ approximates the contrast function $q_{\Theta}$ in free space. Both hypothesis spaces are taken to be the class of fully-connected feedforward neural networks, wherein the parameter sets $\Phi$ and $\Theta$ are independent.


The optimization procedure is predicated on the following assumptions:
\begin{enumerate}
    \item There exist models $q_{\Theta} \in \mathcal{H}$ and $\widehat{\omega}_{\Phi} \in \widehat{\mathcal{H}}$ for which the loss function $\mathcal{L}$ defined in \eqref{eq:istloss} attains a sufficiently small value.
    \item Under the preceding assumption, the corresponding relative error $\mathcal{E}$ defined in \eqref{eq:total error} is likewise small.
    \end{enumerate}


The ultimate objective is to identify a model $q_{\Theta}^{\ast} \in \mathcal{H}$ such that $q_{\Theta}^{\ast} \approx q_{\text{\text{per}}}$. In conjunction with the hypothesis spaces $\mathcal{H}$ and $\widehat{\mathcal{H}}$, the system of model equations \eqref{eq:perstateeqn_model}-\eqref{eq:perdataeqn_model} gives rise to the model-based loss function
\begin{align}\label{eq:istloss}
    \mathcal{L}: \widehat{\mathcal{H}} \times \mathcal{H} & \rightarrow \mathbb{R}  \\
(\widehat{\omega}_{\Phi},q_{\Theta}) & \mapsto \frac{1}{M}\sum_{m=1}^{M} \bigg( \left| \mathcal{I}(z,d_m) - k^{2}(2R)^{n/2}\mathcal{F}^{-1} \left[\widehat{\K}(j)\widehat{\omega}_{\Phi}(j,d_{m}) \right](z,d_{m})\right|^{2} \nonumber \\
& \hspace{-0.5cm}+  \left| \mathcal{F}^{-1} \left[\widehat{\omega}_{\Phi}(j,d_{m}) \right](x,d_m)  - q_{\Theta}(x)u^{in}(x,d_{m}) -k^{2}q_{\Theta}(x) (2R)^{n/2}  \mathcal{F}^{-1} \left[\widehat{\mathcal{P}}(j) \widehat{\omega}_{\Phi}(j,d_{m}) \right](x,d_{m}) \right|^{2} \bigg), \nonumber
\end{align}
where $j\in \mathbb{Z}^n, \, x \in \Omega,$ and $  z\in B_{0}(\rho)$.



With the loss function $\mathcal{L}$, the training objective is formulated as the following minimization problem
\begin{align*}
    \min_{\widehat{\omega}_{\Phi} \in \widehat{\mathcal{H}}, \, q_{\Theta} \in \mathcal{H}} \mathcal{L}[\widehat{\omega}_{\Phi},q_{\Theta} ] & := \min_{\widehat{\omega}_{\Phi} \in \widehat{\mathcal{H}}, \, q_{\Theta} \in \mathcal{H}}  \bigg\{
    \frac{1}{M}\sum_{m=1}^{M} \bigg( \left| \mathcal{I}(z,d_m) - k^{2}(2R)^{n/2}\mathcal{F}^{-1} \left[\widehat{\K}(j)\widehat{\omega}_{\Phi}(j,d_{m}) \right](z,d_{m})\right|^{2} \nonumber \\
 &\hspace{-2.5cm}+  \left| \mathcal{F}^{-1}  \left[\widehat{\omega}_{\Phi}(j,d_{m}) \right](x,d_m)   - q_{\Theta}(x)u^{in}(x,d_{m}) -k^{2}q_{\Theta}(x) (2R)^{n/2}  \mathcal{F}^{-1} \left[\widehat{\mathcal{P}}(j) \widehat{\omega}_{\Phi}(j,d_{m}) \right](x,d_{m}) \right|^{2} \bigg)
\bigg\},
\end{align*}
where $j\in \mathbb{Z}^n, \, x \in \Omega,$ and $  z\in B_{0}(\rho)$.

Two neural networks are initialized in parallel: one to approximate $\widehat{\omega}_{\Phi}$
, which operates in the frequency domain and takes the frequency variables $j_{1}, j_{2}, j_{3}$ and the unit direction vector $ d^{m}_{1}, d^{m}_{2}, d^{m}_{3}$ as inputs, and one to approximate $q_{\Theta}$, which operates in the physical domain and takes the spatial variables $x_{1}, x_{2}, x_{3}$ as inputs. Both networks are optimized concurrently at each iteration via simultaneous gradient updates $\Theta \leftarrow \Theta - \tau\nabla_{\Theta}\mathcal{L}$, $\Phi \leftarrow \Phi- \tau\nabla_{\Phi}\mathcal{L}$, sharing a common learning rate $\tau$. The total loss $\mathcal{L}$ is defined as the equal-weighted sum of the data equation loss and the state equation loss. To minimize the data equation loss, $\widehat{\omega}_{\Phi}$ is learned using the imaging function $\mathcal{I}(z, d_m)$, which is produced in the first stage and serves as a direct input to the data equation loss term, encoding prior information about the scatterer's shape and location. This estimate then informs the recovery of $q_{\Theta}$ through the state equation loss. That is, $\widehat{\omega}_{\Phi}$, obtained by minimizing the data equation loss, is substituted into the state equation loss to recover $q_{\Theta}$. The objective is to find parameters $\Phi$ and $\Theta$ that jointly minimize $\mathcal{L}$, as described in Algorithm~\ref{alg:Model_Informed_Neural_Network_Algorithm}.

A schematic for  the training process for $n=3$ can be seen in Figure~\ref{fig:Training Process}, where $N$ and $\tilde{N}$ denotes the number of points in the discretization of $\Omega$ and $B_{0}(\rho)$ respectively. 

\paragraph{Choice of hyperparameters.} The neural network approximating $\widehat{\omega}_{\Phi}$ takes as input the set of quadrature points
$$\{j=(j_1,j_2)^T\in \mathbb{Z}^{2}: j_1,j_2 = -12,-11, ..., 12,13\}, \qquad\{d_m=(d_1^m,d_2^m)^T\in\mathbb{S}^{1}: m = 1,2,...,M\},$$
for $n=2$ and $$\{j=(j_1,j_2,j_3)^T\in \mathbb{Z}^{3}: j_1,j_2,j_3 = -11, ..., 12\}, \qquad\{d_m=(d_1^m,d_2^m,d_3^m)^T\in\mathbb{S}^{2}: m = 1,2,...,M\},$$
for $n=3$, with complex-valued output. The neural network approximating $q_{\Theta}$ takes as input the set of sampling points obtained by uniformly discretizing $[-2,2]^{n}$ into $64$ points along each axis, with output dimension corresponding to $1$. Both networks share the same architecture: depth $4$, width $64$, and the $\tanh$ activation function in the hidden layers. The parameters of each network are initialized independently using the Glorot normal initializer, and each network is trained via a dedicated Adam optimizer with a learning rate of $0.001$, for $100,000 $ iterations in the case $n=2$ and $70,000$ iterations in the case $n=3$. Convergence of the algorithm is assessed by monitoring the stabilization of the relative error $\mathcal{E}$ in \eqref{eq:total error}. In the numerical experiments presented below, the relative error is observed to stabilize after $70,000$ iterations (see Figure \ref{fig:Update process and Loss curve (2D) - Kite}). 

\begin{figure}[H]
    \centering
    \includegraphics[scale=0.35]{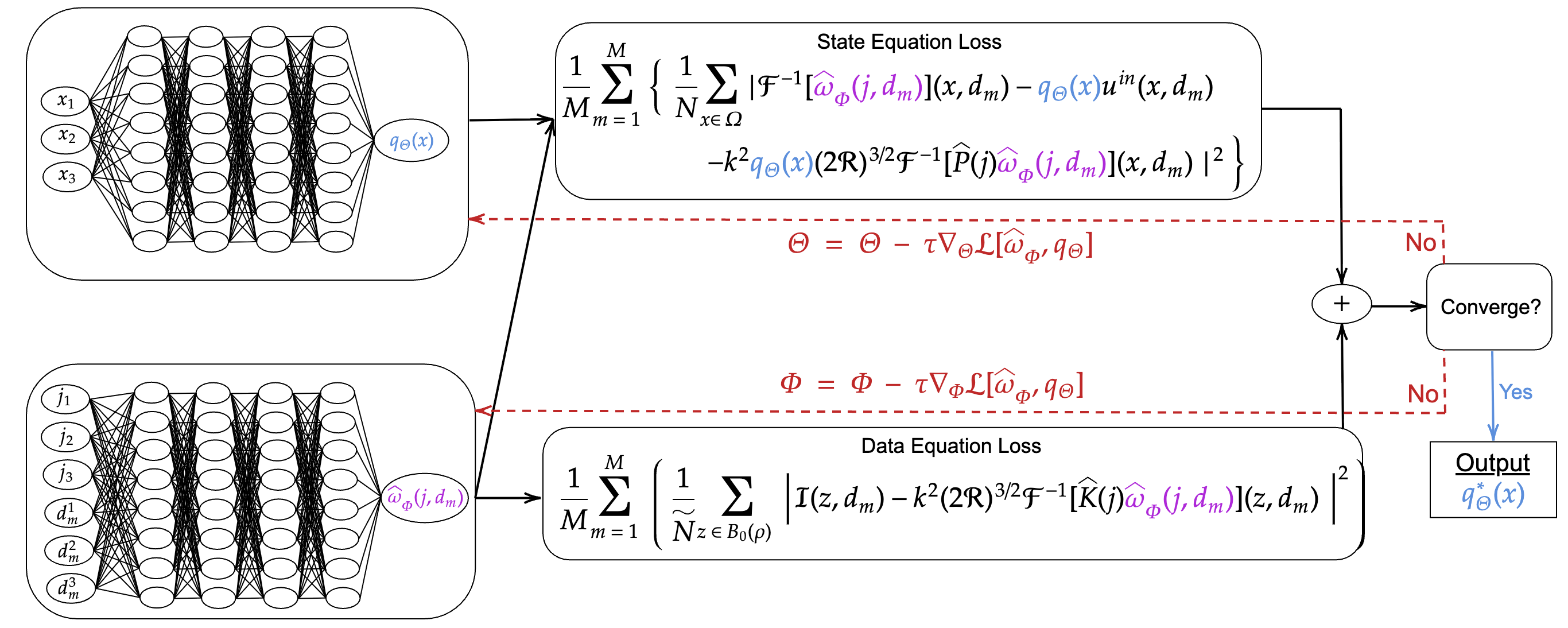}
    \caption{A schematic for the training process as $n = 3$.}
    \label{fig:Training Process}
\end{figure}



\begin{algorithm}[H]
\caption{Model-informed Deep Learning Algorithm}
\label{alg:Model_Informed_Neural_Network_Algorithm}
\SetKwInOut{Input}{Input}
\Input{$N$: number of collocation points in $\Omega$ \\
$\tilde{N}$: number of collocation points in $B_0(\rho)$ \\
\hspace*{-0.2em}$M$: number of incident directions\\
\hspace*{-0.7em}$iter$: number of iterations \\
\hspace*{0.3em}$\tau$: learning rate}
\BlankLine
Initialize the hypothesis spaces $\widehat{\mathcal{H}}$ and $\mathcal{H}$ with the hyperparameters given above.\\
\For{$i=1:\text{iter}$}{
Compute:
\vspace{-0.5cm}
\begin{align*}
\mathcal{L}(\widehat{\omega}_{\Phi}, q_{\Theta})
& = \frac{1}{M}\sum_{m=1}^{M} \bigg( \frac{1}{N}\sum_{z \in B_{0}(\rho)}
\left| \mathcal{I}(z,d_m) - (2R)^{n/2}\mathcal{F}^{-1}\!\left[\widehat{\mathcal{K}}(j)\,\widehat{\omega}_{\Phi}(j,d_{m})\right]\!(z,d_{m})\right|^{2} \\
&\hspace{-1.5 cm}+\frac{1}{\tilde{N}}\sum_{x \in \Omega}\,\left| \mathcal{F}^{-1}\!\left[\widehat{\omega}_{\Phi}(j,d_{m})\right]\!(x,d_{m}) - q_{\Theta}(x)\,u^{\mathrm{in}}(x,d_{m}) - k^{2}q_{\Theta}(x)\,(2R)^{n/2}\mathcal{F}^{-1}\!\left[\widehat{\mathcal{P}}(j)\,\widehat{\omega}_{\Phi}(j,d_{m})\right]\!(x,d_{m})\right|^{2}
\bigg);
\end{align*}

Update $\Phi \leftarrow \Phi - \tau\nabla_{\Phi}\,\mathcal{L}(\widehat{\omega}_{\Phi}, q_{\Theta})$\; 

Update $\Theta \leftarrow \Theta - \tau\nabla_{\Theta}\,\mathcal{L}(\widehat{\omega}_{\Phi}, q_{\Theta})$\;
}
\SetKwInOut{Output}{Output}
\Output{$ q^{\ast}_{\Theta}$}
\end{algorithm}

\section{Numerical results}\label{section5}


In this section, we illustrate the effectiveness of our proposed Algorithm \ref{alg:Model_Informed_Neural_Network_Algorithm} in reconstructing the unknown scatterer from boundary field data with numerical examples across the $2$D and $3$D settings. For the $2$D case, the wave number is set to be $7$ and the number of incident waves is $24$. The measurement boundary $\partial \Omega$ is taken to be the circle of radius $50$, centered at $0$, uniformly discretized into $64$ points. For the $3$D case, the wave number is set to be $7$ and the number of incident waves is $16$. The measurement boundary $\partial \Omega$ is taken to be the sphere of radius $50$, centered at $0$, uniformly discretized into $24$ azimuthal and $24$ polar points. The scattered field data on the boundary domain, $u^{sc}$, is generated via the Lippmann-Schwinger equation \eqref{eqn:lippmannschwringerqn}, from which the phaseless data, $u_{p},$ is subsequently computed using \eqref{phaselessformula}. To demonstrate the stability of our algorithm, $10\%$ noise is added to the boundary data according to the formula
\[u_p = u_p + 0.1 \times \frac{\mathcal{N}}{\left\Vert\mathcal{N}\right\Vert_F} \left\Vert{u_p}\right\Vert_F,\]
where $\mathcal{N}$ is the noise matrix whose entries are of the form $a + bi$ with $a, b \in (-1, 1)$ drawn independently from an uniform distribution, and $|| \cdot ||_{F}$ denotes the Frobenius norm.


The sampling domain $\Omega$ is taken to be the grid $[-2,2]^{n}$ uniformly discretized into $64$ points along each axis. To quantitatively assess the performance of the algorithm, we introduce the relative error metric
\begin{align}\label{eq:total error}
    \mathcal{E} \left[ q_{\Theta}^{\ast}\right] & = \frac{  \left\| q_{\Theta}^{\ast}(z)  - q_{\text{per}}(z)
    \right\|_{F}}{\left\|q_{\text{per}}(z)
    \right\|_{F}}.
\end{align}
This metric quantifies the discrepancy between the true and reconstructed scatterer, thereby serving as a direct measure of the reconstruction accuracy of the algorithm.


\subsection{$2$D numerical examples}


We demonstrate the reconstruction ability of the algorithm for the following scatterers: a square, a kite, an austria profile, a smiley face (Figure \ref{fig: 2D reconstruction phaseless (24)}), and a two-disk configuration (Figure \ref{fig:Kiterecon}) . The relative errors reported in Table \ref{table: 2D results phaseless (24)} are consistently low, indicative of accurate reconstructions across all test cases. The algorithm effectively determines the boundaries, locations, sizes, and coefficient values of each scatterer, despite the problem's non-linear and ill-posed nature. The numerical results demonstrate the method's versatility in recovering functions with a wide range of characteristics: it successfully reconstructs functions with convex compact support, such as the square, as well as non-convex geometries, such as the kite. Furthermore, it handles functions with sharp corners such as the square, and can recover disconnected domains containing holes, as illustrated by the austria profile example.

In particular, our algorithm successfully reconstructs the two-disk scatterer even though the distance between the two disks is less than $\lambda/2$, the physical resolution limit. The first disk is centered at $(0.35, 0)$, and the second disk is centered at $( - 0.3, 0)$, both with a radius of $0.3$. At wave number $k=7$, the physical resolution is $(2\pi/k)/2 = 0.4488$, which exceeds this gap, making the two disks particularly challenging to resolve. As seen in Figure \ref{fig:Kiterecon} the close proximity of the two disks means that the imaging function is not able to completely recover their shapes. Nevertheless, stage $2$ of our algorithm successfully distinguishes and recover both scatterers. This is unlike the case for the kite, where the imaging function provides clearer geometrical information  (Figure \ref{figI(z)}).

All four scatterers show a sharp initial decrease in relative error within the first $10,000$ to $30,000$ iterations, after which the errors stabilize and plateau for the remainder of the iterations (Figure \ref{fig:Update process and Loss curve (2D) - Kite}). The $70,000$ iterations appears to mark a transition point (represent by the dotted vertical line in Figure \ref{fig:Update process and Loss curve (2D) - Kite}), after which all errors remain essentially stable. The logarithmic loss curves for all scatterers decrease rapidly in the early iterations, consistent with the relative error behavior. The logarithmic curves show some oscillatory behavior in the initial iterations before smoothing out, indicating initial instability in the optimization that resolves as training progresses. 

Lastly, satisfactory reconstructions are retained even in the presence of boundary data contaminated with noise.

 \begin{figure}[H]
    \centering
    \begin{subfigure}[b]{0.25\textwidth}
        \includegraphics[width=\linewidth]{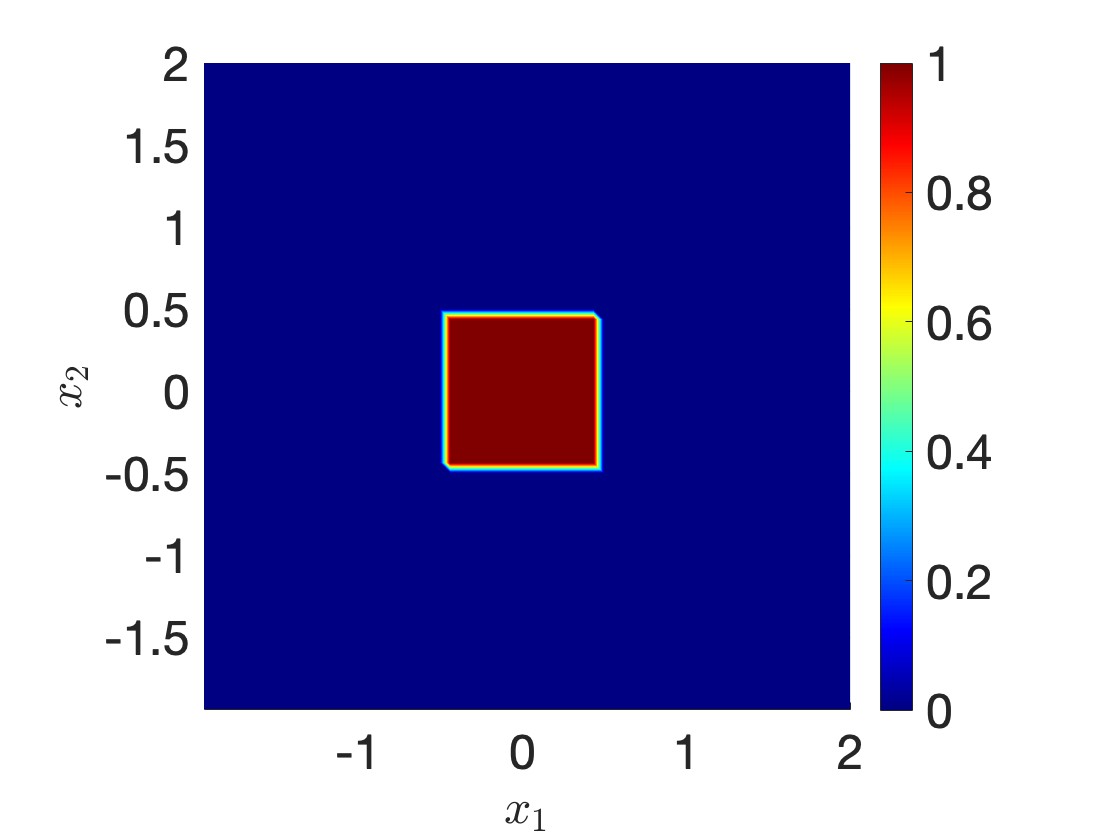}
    \end{subfigure}
    \hspace{-0.5cm}
    \begin{subfigure}[b]{0.25\textwidth}
        \includegraphics[width=\linewidth]{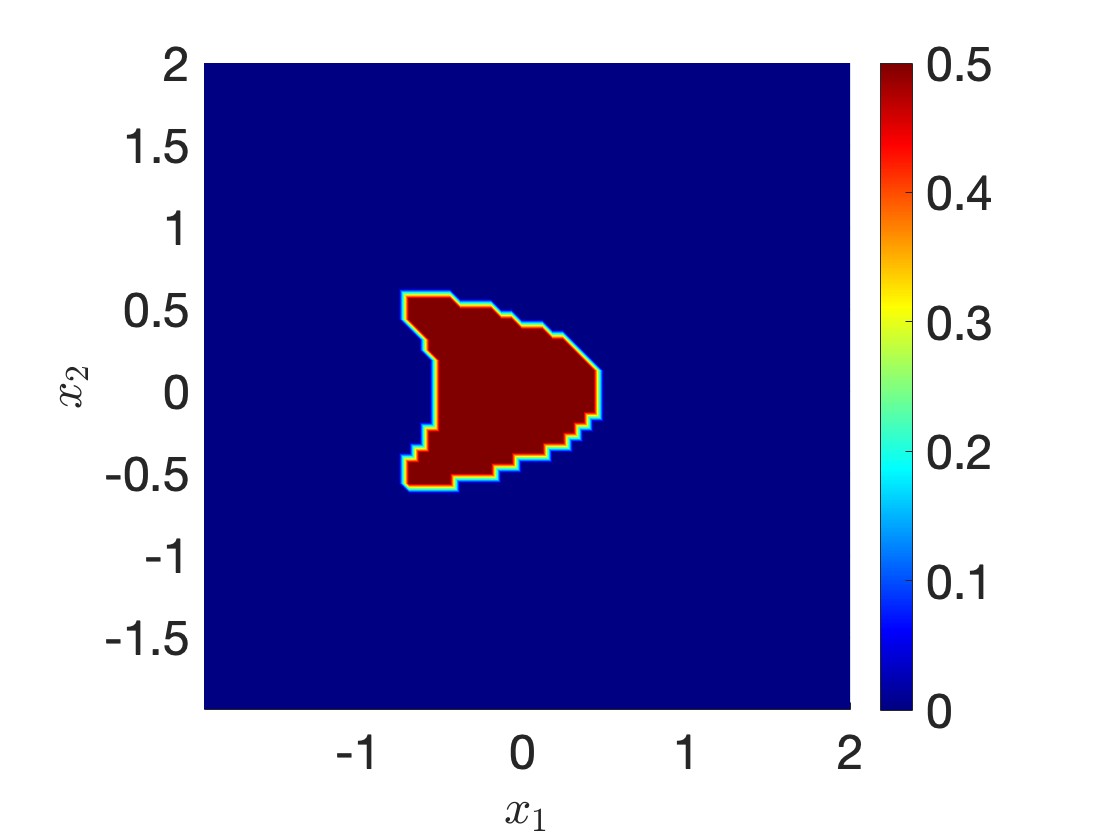}
    \end{subfigure}
    \hspace{-0.5cm}
    \begin{subfigure}[b]{0.25\textwidth}
        \includegraphics[width=\linewidth]{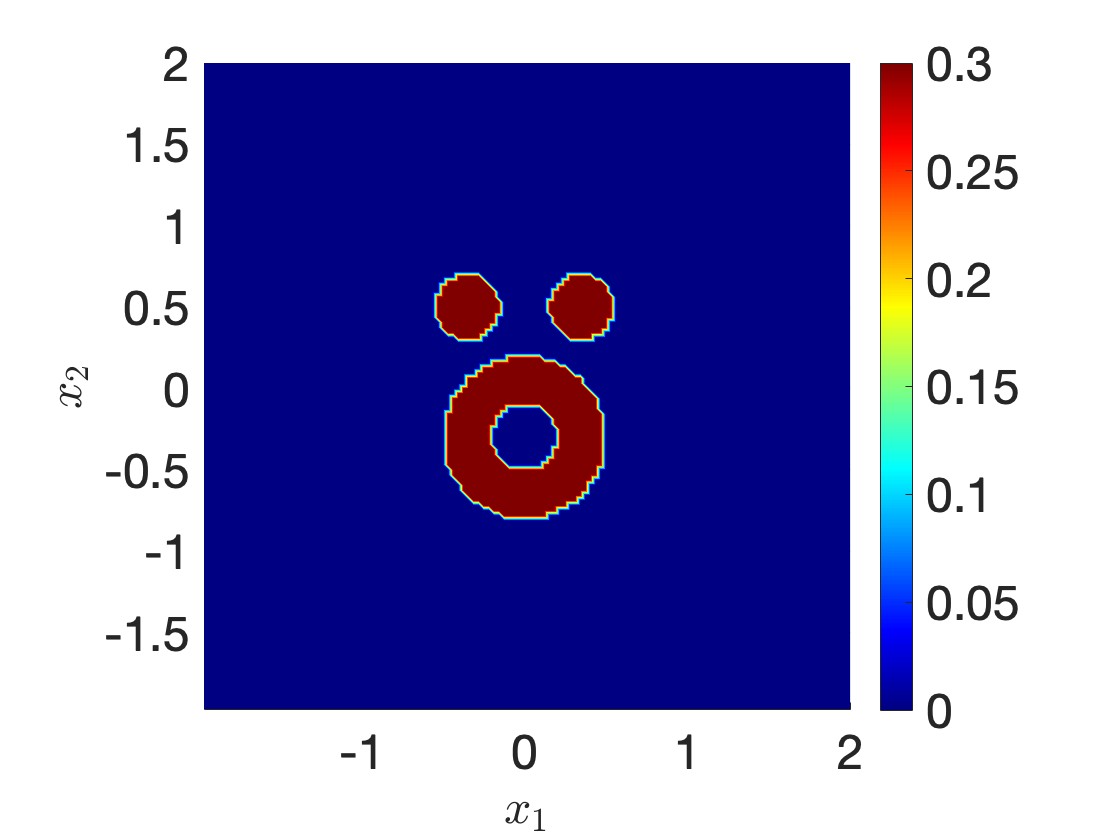}
    \end{subfigure}
    \hspace{-0.5cm}
    \begin{subfigure}[b]{0.25\textwidth}  \includegraphics[width=\linewidth]{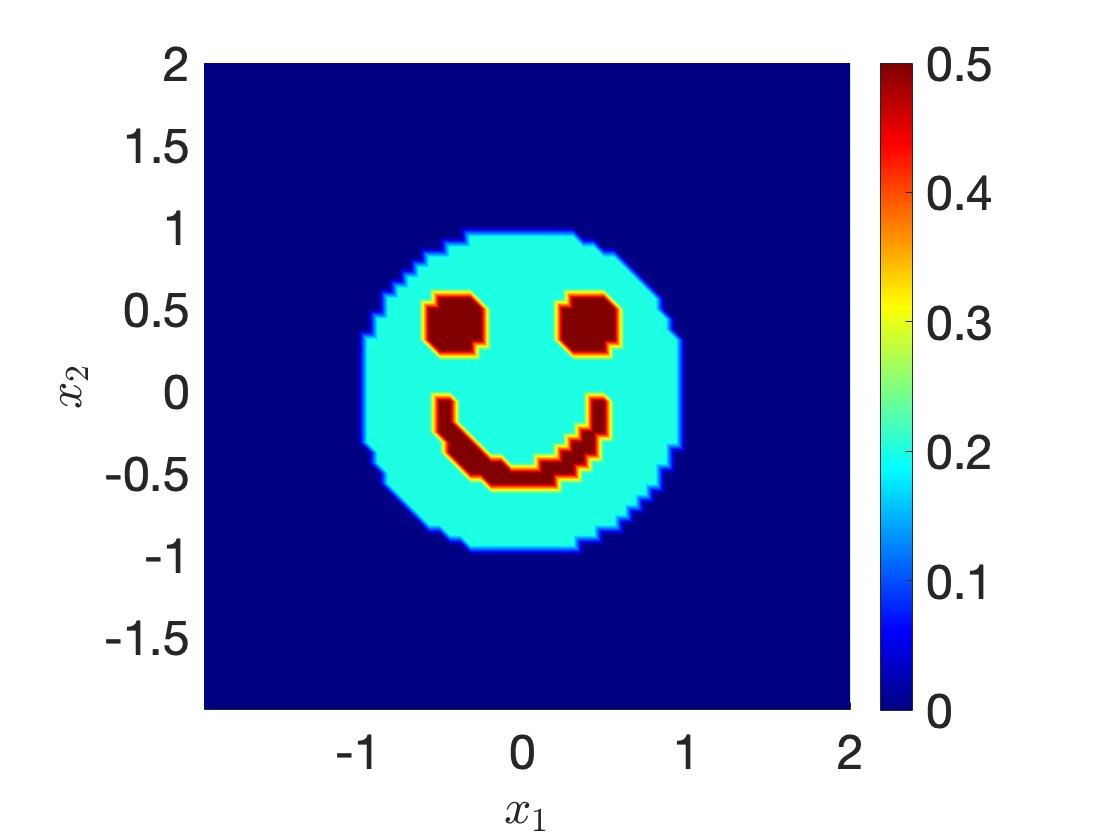}
    \end{subfigure}
    \hspace{-0.5cm}
    
    \medskip
    
    \centering
    \begin{subfigure}[b]{0.25\textwidth}
        \includegraphics[width=\linewidth]{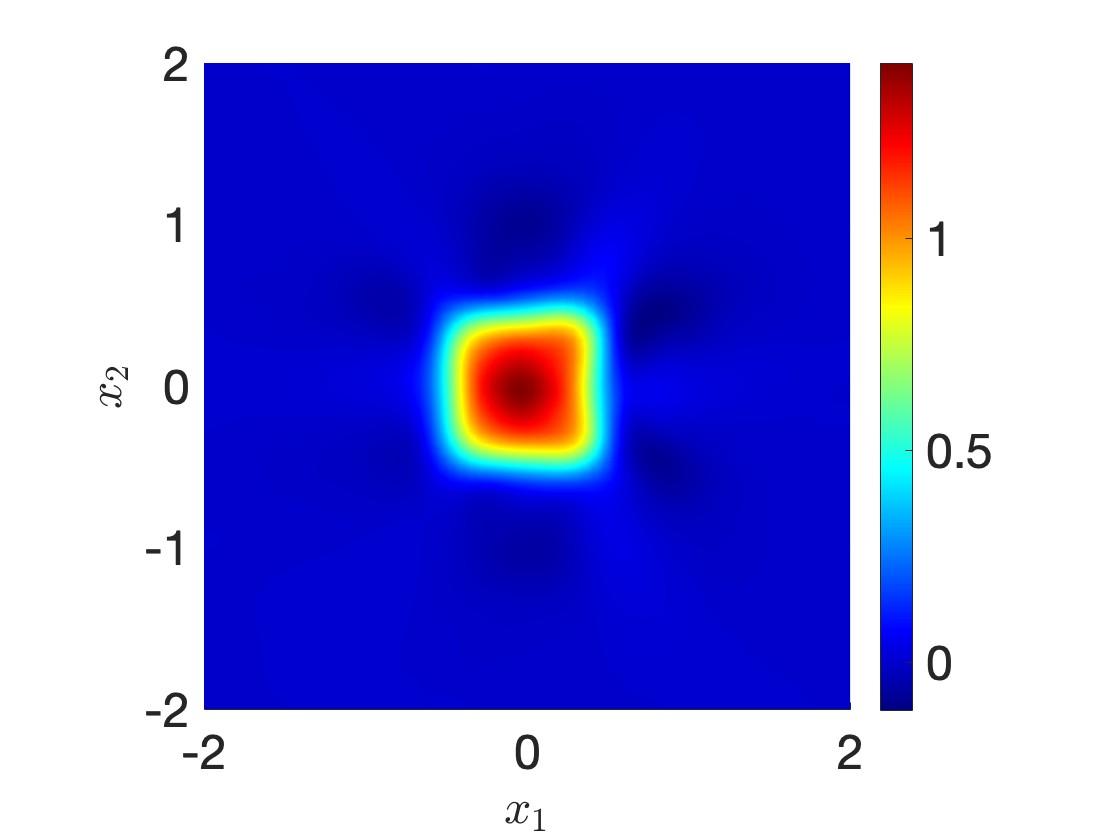}
    \end{subfigure}
    \hspace{-0.5cm}
    \begin{subfigure}[b]{0.25\textwidth}
        \includegraphics[width=\linewidth]{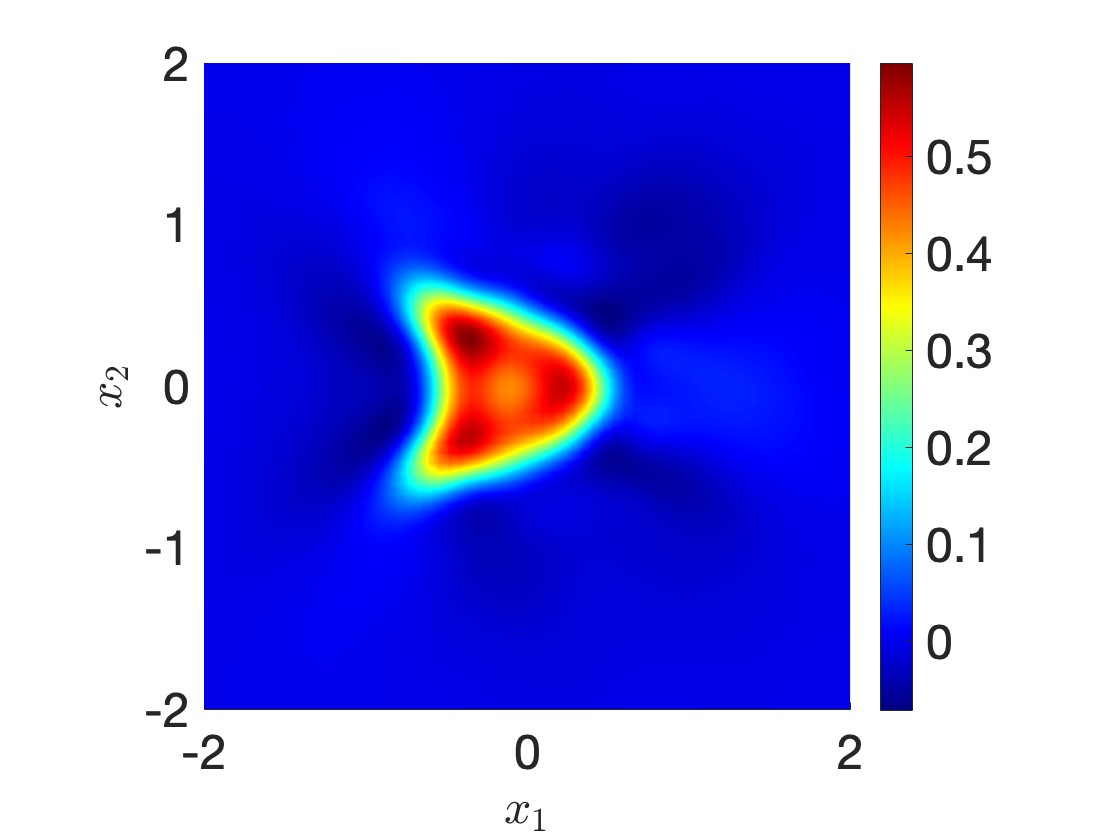}
    \end{subfigure}
    \hspace{-0.5cm}
    \begin{subfigure}[b]{0.25\textwidth}
        \includegraphics[width=\linewidth]{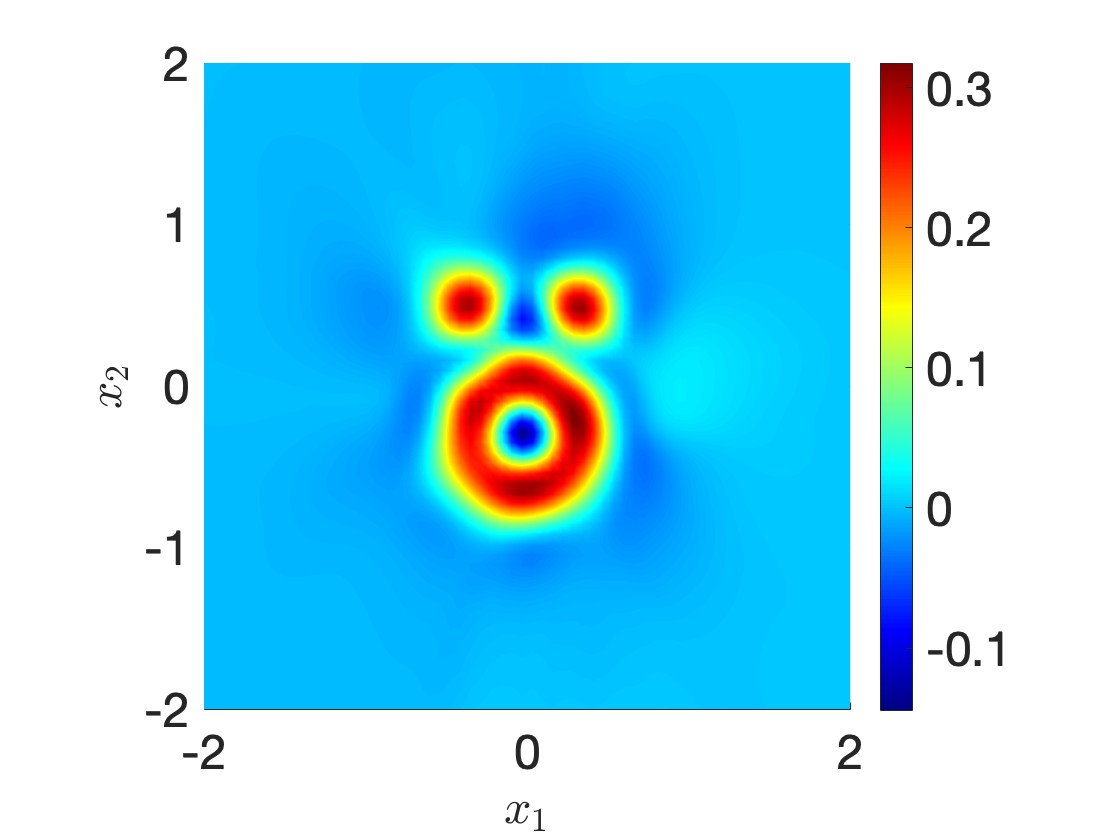}
    \end{subfigure}
    \hspace{-0.5cm}
    \begin{subfigure}[b]{0.25\textwidth}
\includegraphics[width=\linewidth]{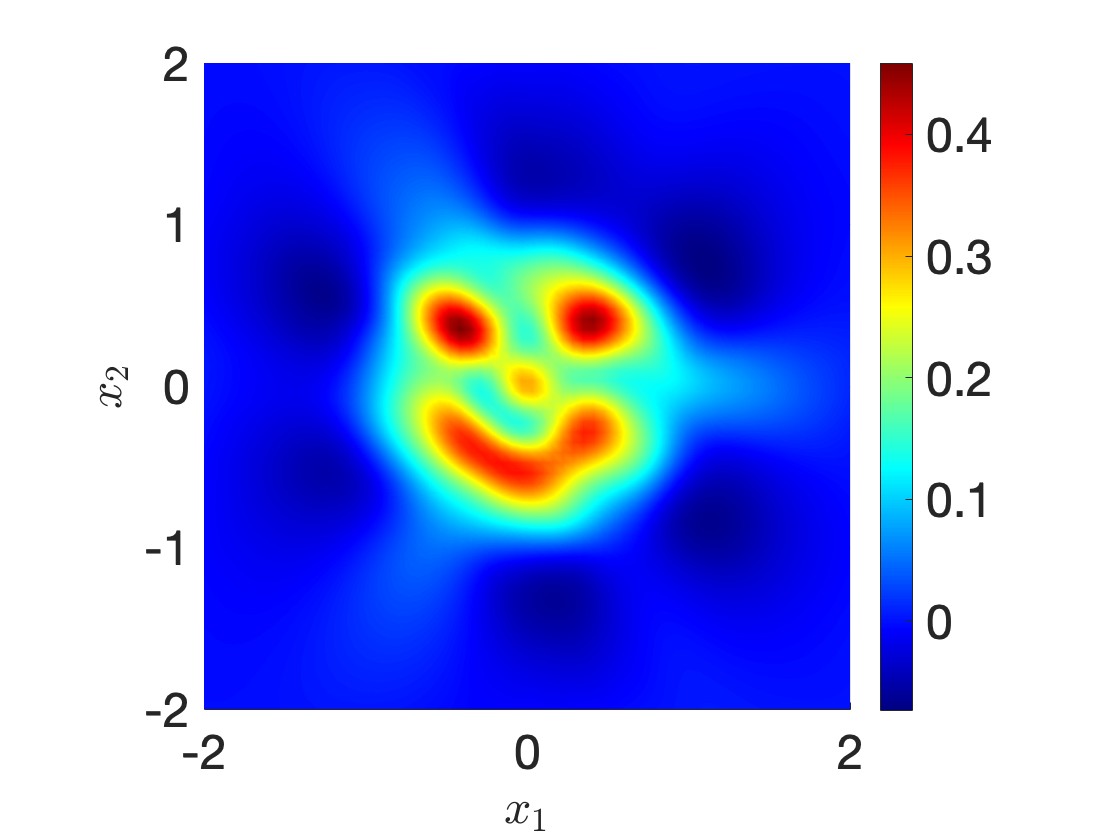}
    \end{subfigure}
    \hspace{-0.5cm}
    \caption{Reconstruction results for the $2$D case. Top row: true scatterers. Bottom row: reconstruction  by our algorithm.}
    \label{fig: 2D reconstruction phaseless (24)}
\end{figure}

\begin{figure}[H]
    \centering
    \begin{subfigure}[b]{0.3\textwidth}
    \includegraphics[width=\linewidth]{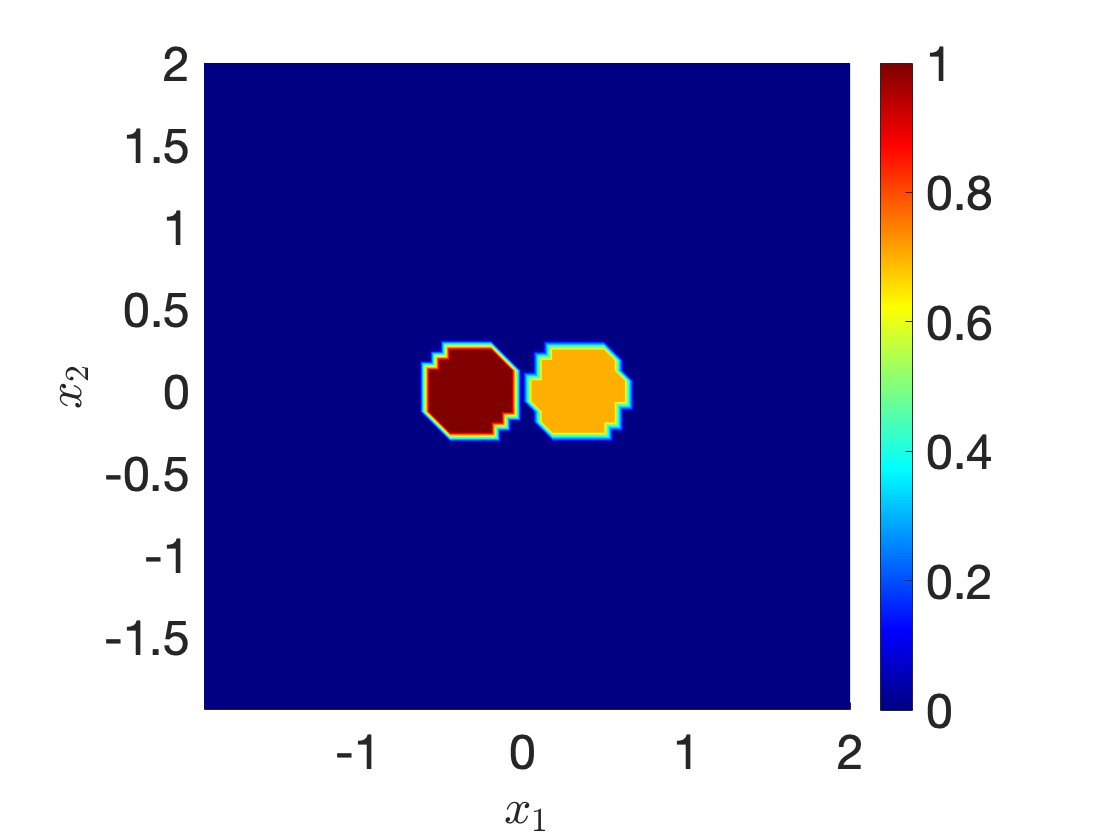}
    \end{subfigure}
    \hspace{-0.5cm}
    \begin{subfigure}[b]{0.3\textwidth}
    \includegraphics[width=\linewidth]{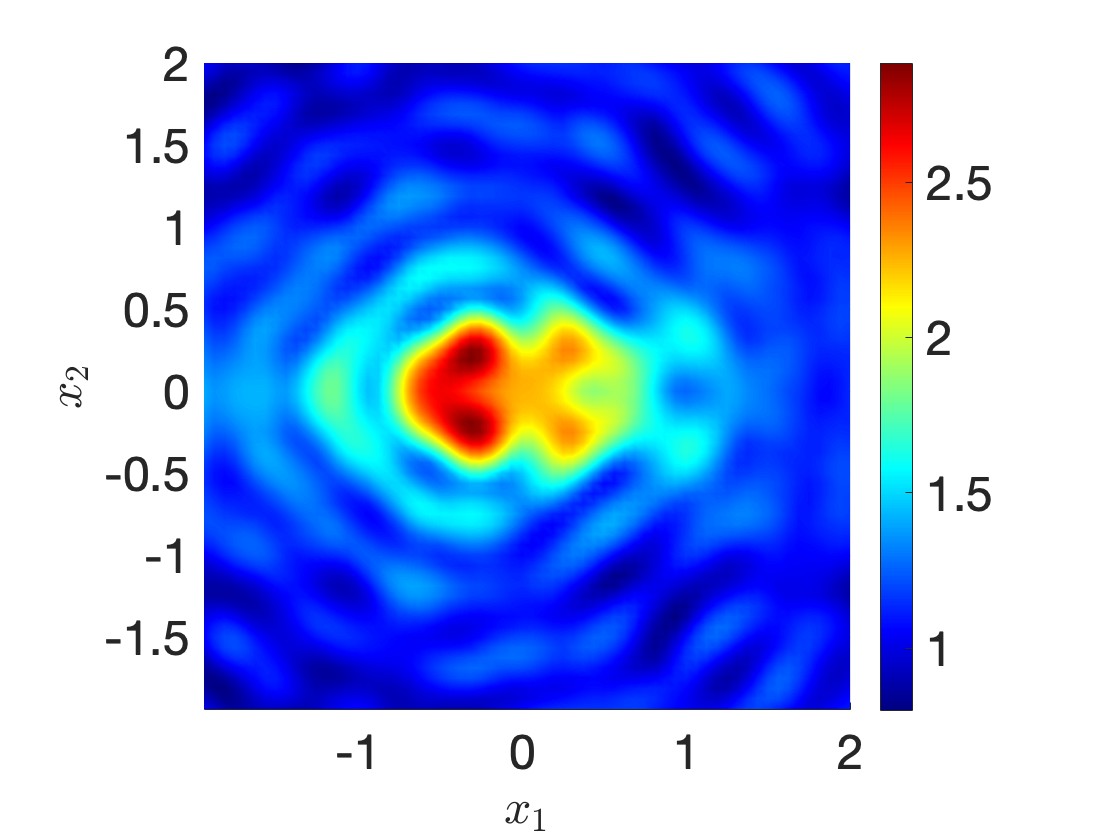}
    \end{subfigure}
    \hspace{-0.5cm}
    \begin{subfigure}[b]{0.3\textwidth}
\includegraphics[width=\linewidth]{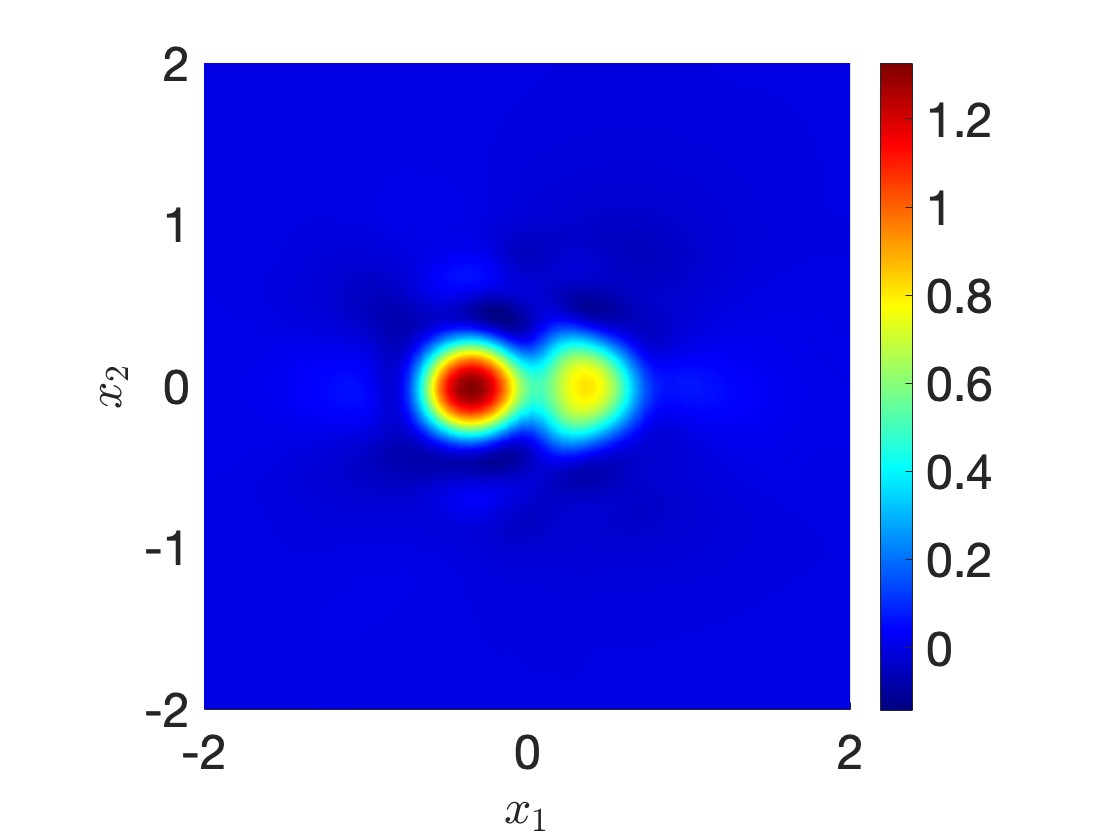}
    \end{subfigure}
    \caption{Reconstruction results for the inhomogeneous two-disk scatterer. Left: true scatterer. Center: $\sum_{m=1}^{10} \left|\mathcal{I}(z,d_m)\right|$. Right: reconstruction by our algorithm.}
    \label{fig:Kiterecon}
\end{figure}

\begin{table}[H]
    \centering
    \begin{tabular}{|c|c|c|c|}
    \hline
    Scatterer &  $\mathcal{E} \left[ q_{\Theta}^{\ast}\right]$ &  Computation time  \\
    \hline
    Square  & $36.4\% $  & 33 min 7 sec \\
    \hline
    Kite  & $35.9\% $  & 32 min 22 sec \\
    \hline
    Austria & $42.8\% $  & 43 min 2 sec \\
    \hline
    Smiley face  & $41.0\% $  & 32 min 58 sec \\
    \hline
    Two-disk  & $40.4\% $  & 41 min 46 sec \\
    \hline
    \end{tabular}
    \caption{Relative error and computation time for $2$D reconstructions}
    \label{table: 2D results phaseless (24)}
\end{table}

\begin{figure}[H]
    \centering
   
    \begin{subfigure}[b]{0.5\textwidth}
        \includegraphics[width=\linewidth]{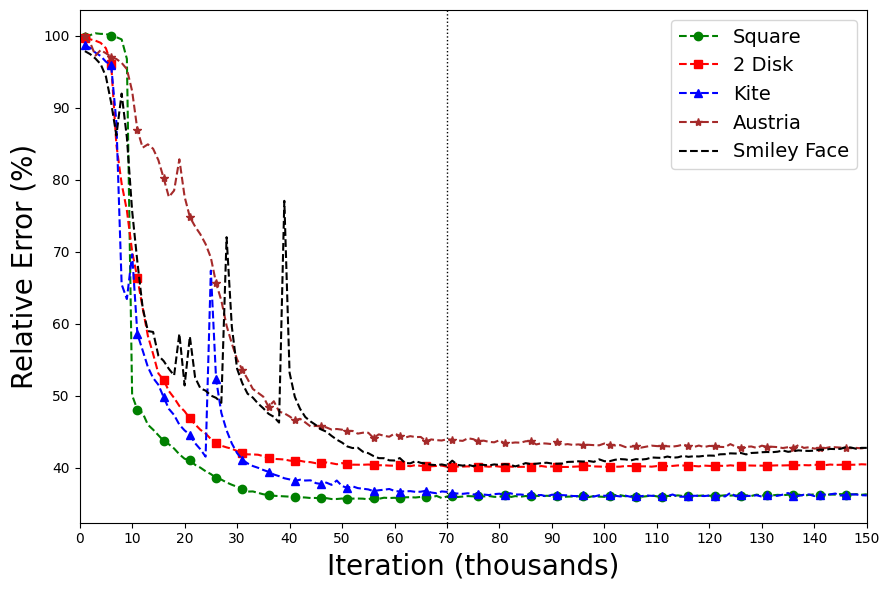}
        \caption{Relative error}
        \label{fig: Relative Error Plots V2}
    \end{subfigure}\begin{subfigure}[b]{0.5\textwidth}
        \includegraphics[width=\linewidth]{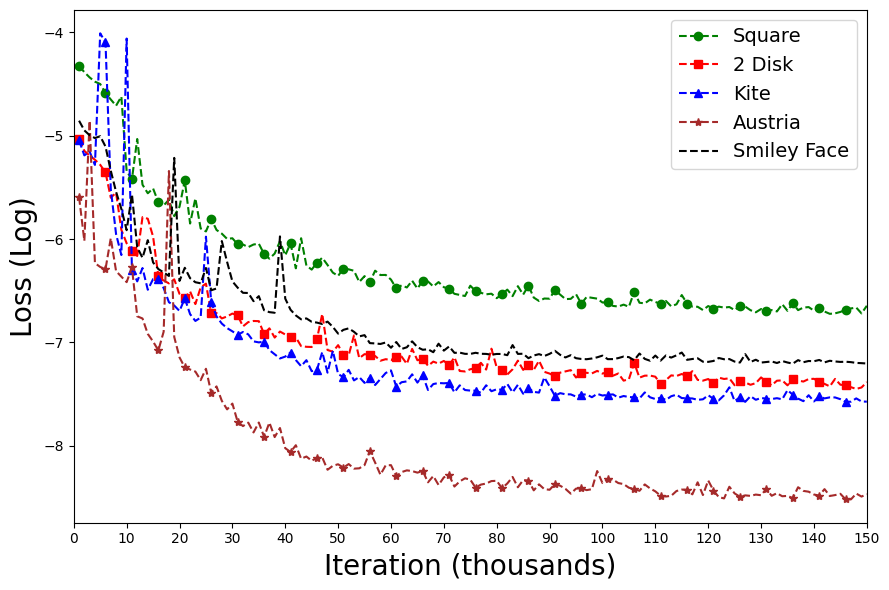}
        \caption{Log loss}
        \label{fig: Log loss Plots V2}
    \end{subfigure}
    \caption{(a) Relative errors between the true and predicted scatterers plotted against iteration number. (b) Evolution of the loss functions plotted against iteration number.} 
    \label{fig:Update process and Loss curve (2D) - Kite}
\end{figure}

\subsection{$3$D numerical examples}


Consistent with the $2$D results, the algorithm yields accurate reconstructions for the following $3$D scatterers: a cube, a sphere, and a pyramid (Figure \ref{fig: 3D reconstruction Slices with phaseless boundary data (k=7,M=16) V2}). In Figures \ref{fig: 3D reconstruction Slices with phaseless boundary data (k=7,M=16) V2}, \ref{fig: 3D reconstruction phaseless V3}, the computational domain is $[-2,2]^{3}$, while the results are displayed over the subdomain $[-1,1]^{3}$. For visualization purposes, the iso-surface plots are rendered at an iso-value of $30\%$ of the peak reconstruction value (Figure \ref{fig: 3D reconstruction isosurface (k=7,M=16)}). 

The algorithm successfully resolves the boundaries, locations, sizes, and coefficient values of each scatterer, as numerically substantiated by the low relative errors reported in Table \ref{table: 3D results phaseless (16)}.
Figure \ref{fig: 3D reconstruction isosurface (k=7,M=16)} highlights the algorithm's robustness across a range of geometries, demonstrating accurate recovery of both smooth scatterers (sphere) and non-smooth scatterers (cube, pyramid), as well as convex geometries. The algorithm further demonstrates versatility in handling more challenging configurations: Figure \ref{fig: 3D reconstruction phaseless V3} presents reconstructions of non-convex and disconnected scatterers, including a scatterer with an interior hole, which the algorithm resolves with high fidelity. The successful recovery of such topologically complex features underscores the method's ability to go beyond simple convex geometries and handle practically relevant, intricate scatterer shapes.

\begin{table}[H]
    \centering
    \begin{tabular}{|c|c|c|c|}
    \hline
    Scatterer &  $\mathcal{E} \left[ q_{\Theta}^{\ast}\right]$ &  Computation time  \\
    \hline
    Cube  & $45.7\% $  & 10 hrs 45 min 38 sec \\
    \hline
    Sphere  & $43.4\% $  & 10 hrs 42 min 44 sec \\
    \hline
    Pyramid & $50.1\% $  & 10 hrs 24 min 41 sec  \\
    \hline
    Cube with hole & $52.6\% $  & 8 hrs 12 min 21 sec  \\
    \hline
    Double sphere & $53.6\% $  & 10 hrs 22 min 10 sec  \\
    \hline
    \end{tabular}
    \caption{Relative error and computation time for $3$D reconstructions}
    \label{table: 3D results phaseless (16)}
\end{table}

\begin{figure}[H]
    \centering
    \begin{subfigure}[b]{0.35\textwidth}
        \includegraphics[width=\linewidth]{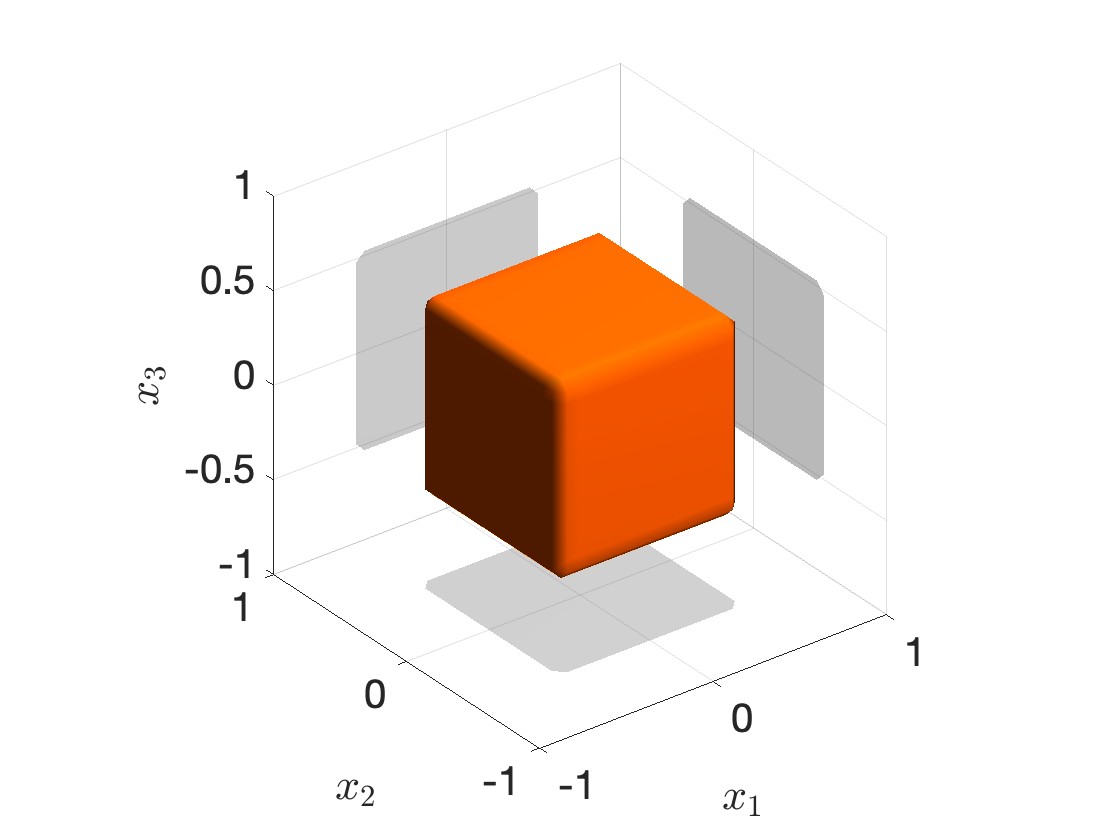}
    \end{subfigure}
    \hspace{-0.75cm}
    \begin{subfigure}[b]{0.35\textwidth}
        \includegraphics[width=\linewidth]{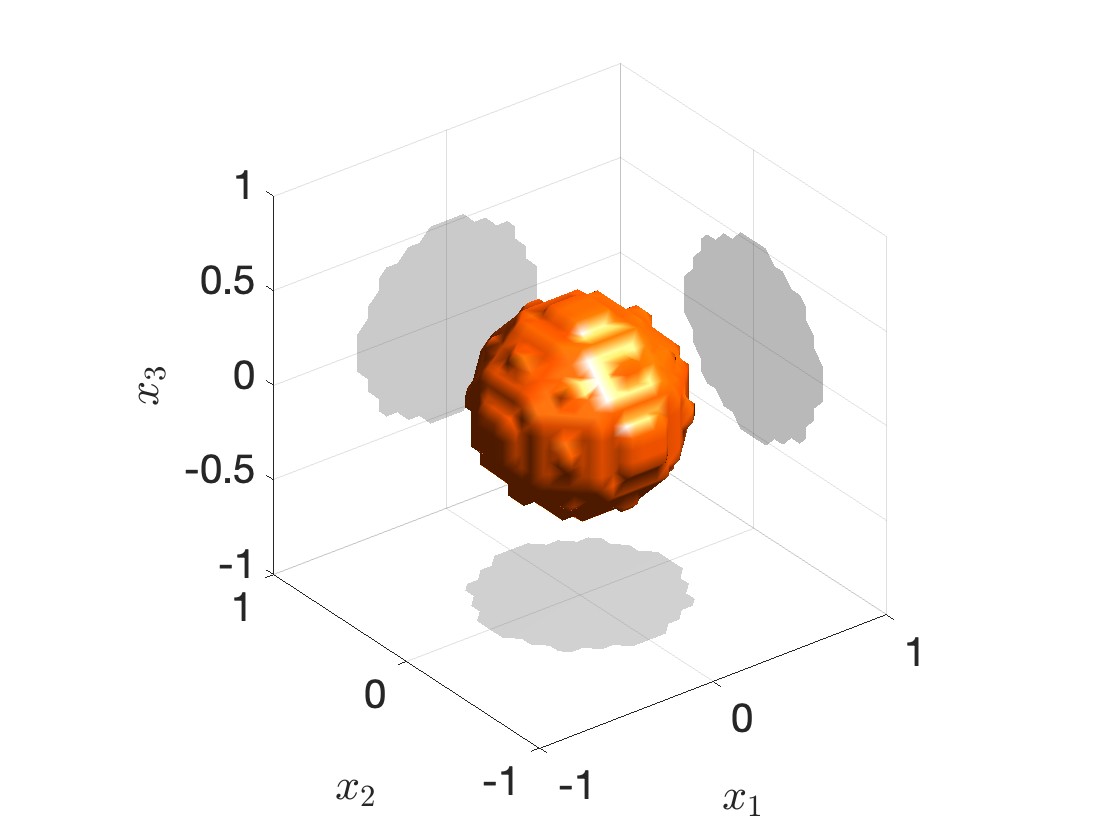}
    \end{subfigure}
    \hspace{-0.75cm}
    \begin{subfigure}[b]{0.35\textwidth}
        \includegraphics[width=\linewidth]{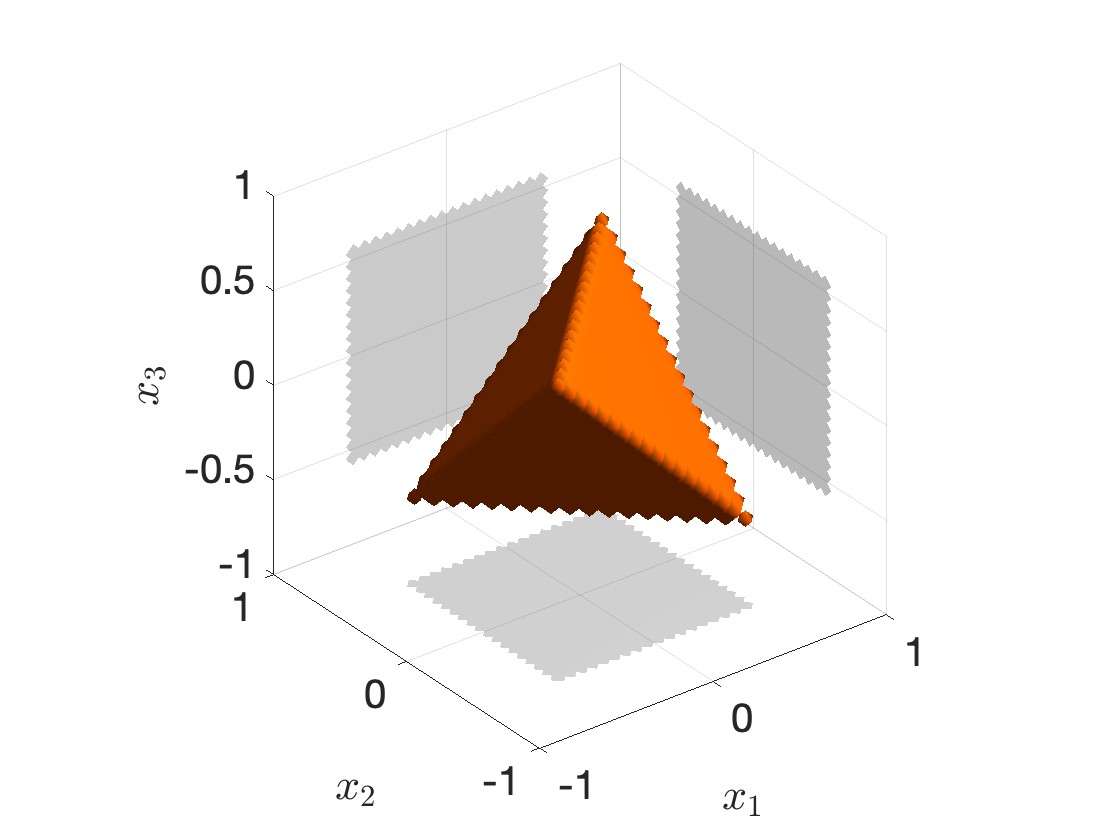}
    \end{subfigure}

    \begin{subfigure}[b]{0.35\textwidth}
        \includegraphics[width=\linewidth]{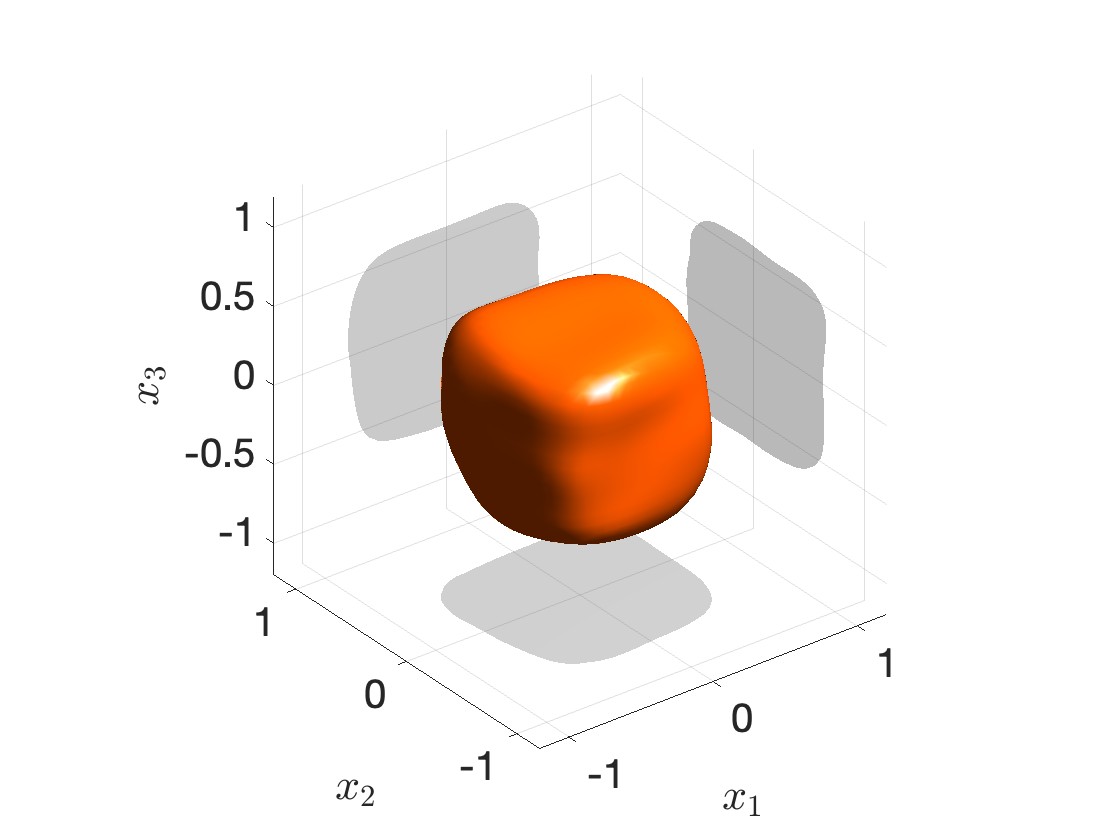}
    \end{subfigure}
    \hspace{-0.75cm}
    \begin{subfigure}[b]{0.35\textwidth}
        \includegraphics[width=\linewidth]{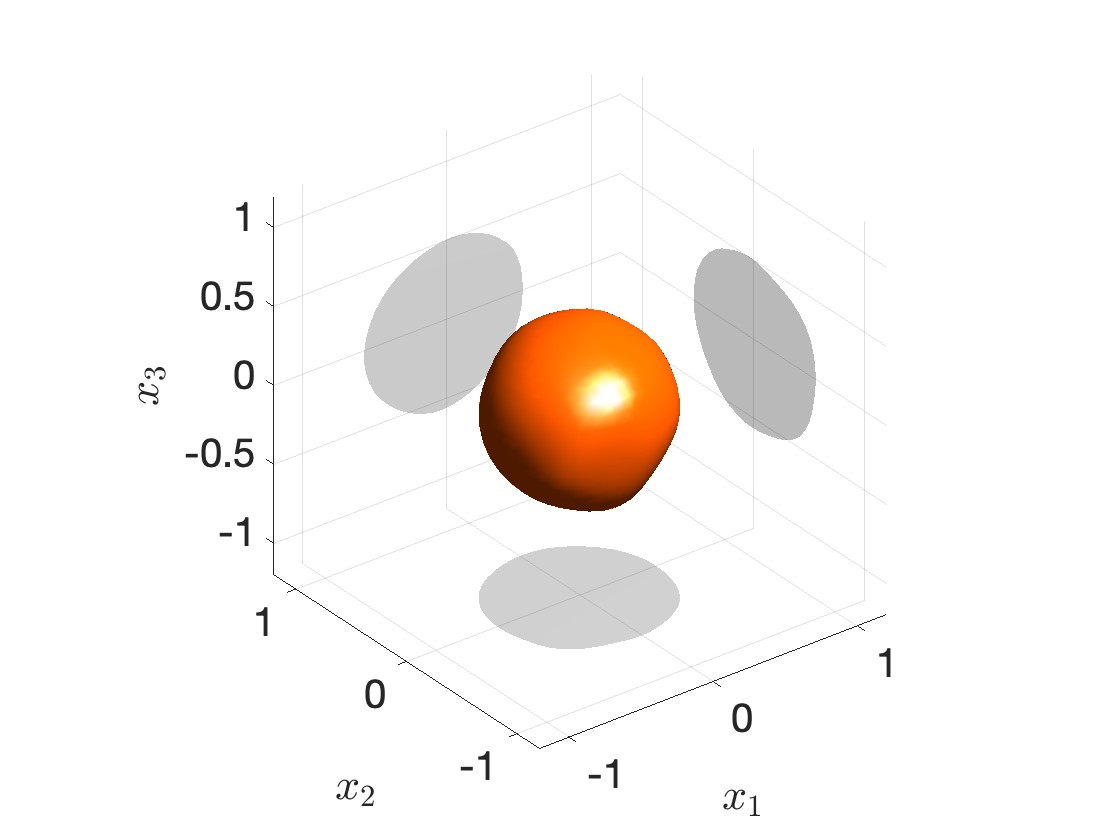}
    \end{subfigure}
    \hspace{-0.75cm}
    \begin{subfigure}[b]{0.35\textwidth}
        \includegraphics[width=\linewidth]{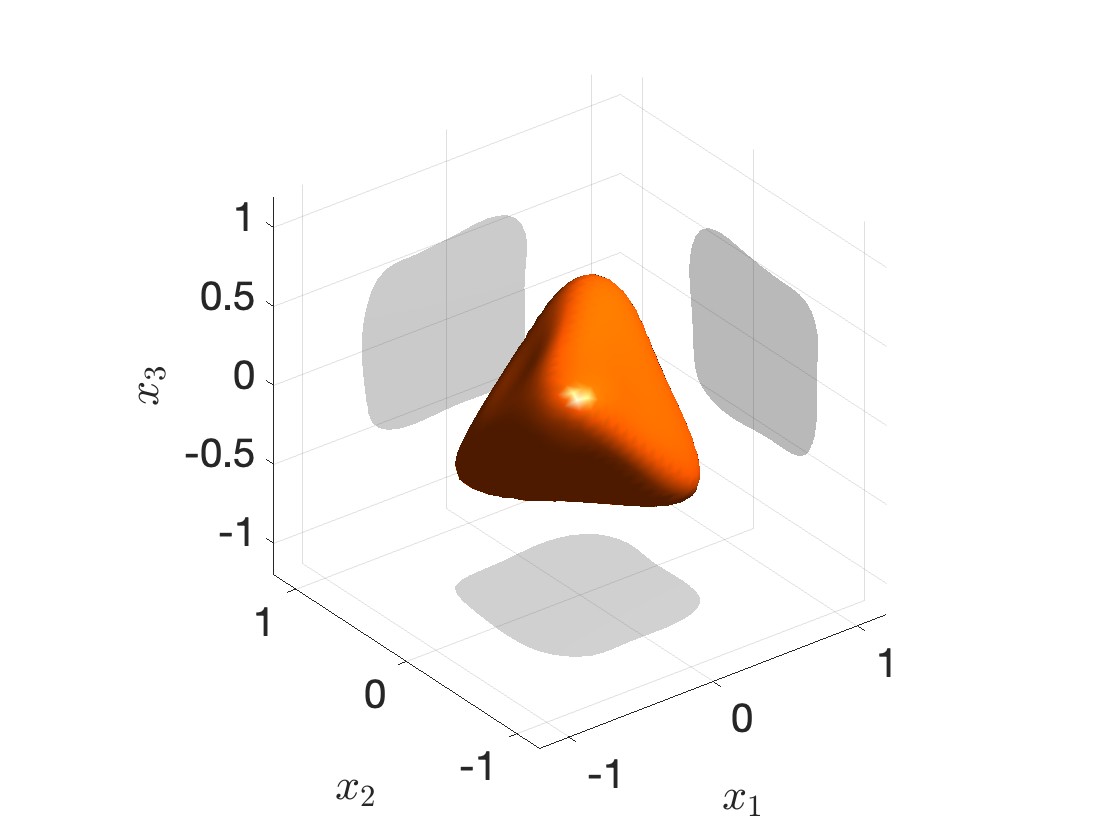}
    \end{subfigure}
    \caption{Iso-surface reconstruction results for the $3$D case. Top row: true scatterers. Bottom row: reconstruction by our algorithm.}
    \label{fig: 3D reconstruction isosurface (k=7,M=16)}
\end{figure}

\begin{figure}[H]
    \centering
     \begin{subfigure}[b]{0.3\textwidth}
        \includegraphics[width=\linewidth]{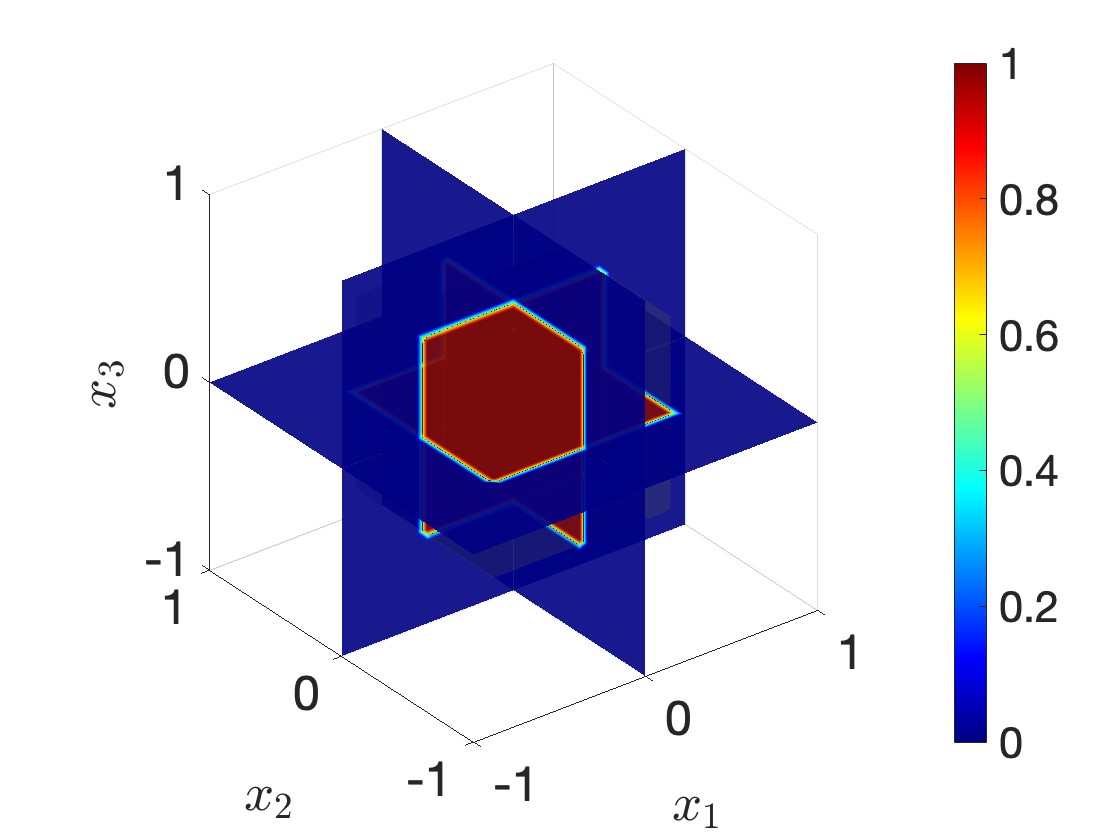}
    \end{subfigure}
    \hspace{-0.5cm}
    \begin{subfigure}[b]{0.3\textwidth}
        \includegraphics[width=\linewidth]{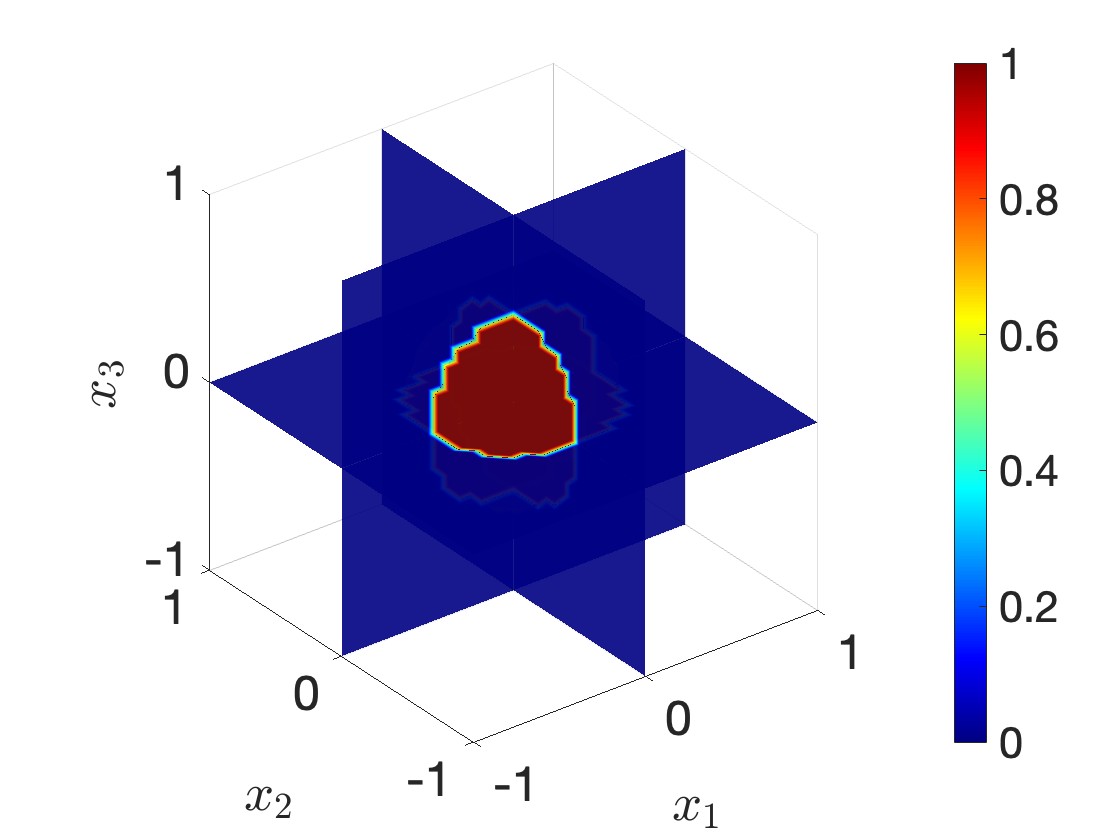}
    \end{subfigure}
\hspace{-0.5cm}
    \begin{subfigure}[b]{0.3\textwidth}
    \includegraphics[width=\linewidth]{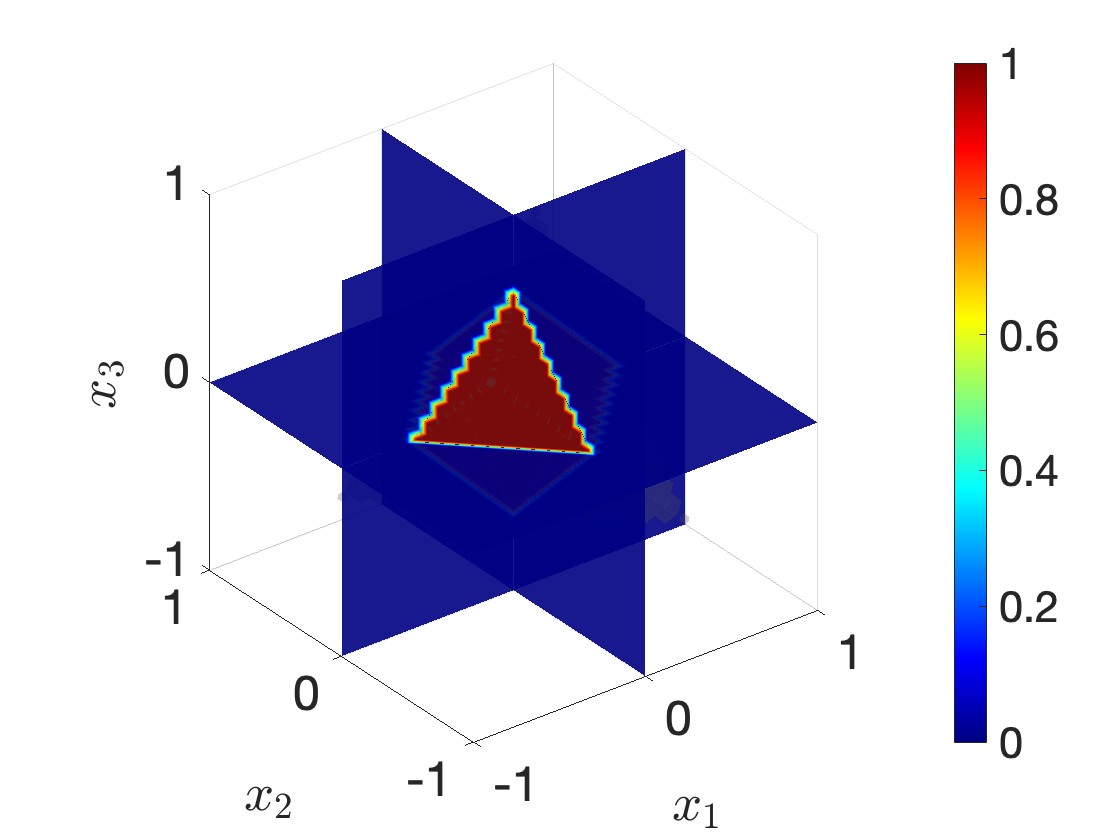}
    \end{subfigure}

    \begin{subfigure}[b]{0.3\textwidth}
    \includegraphics[width=\linewidth]{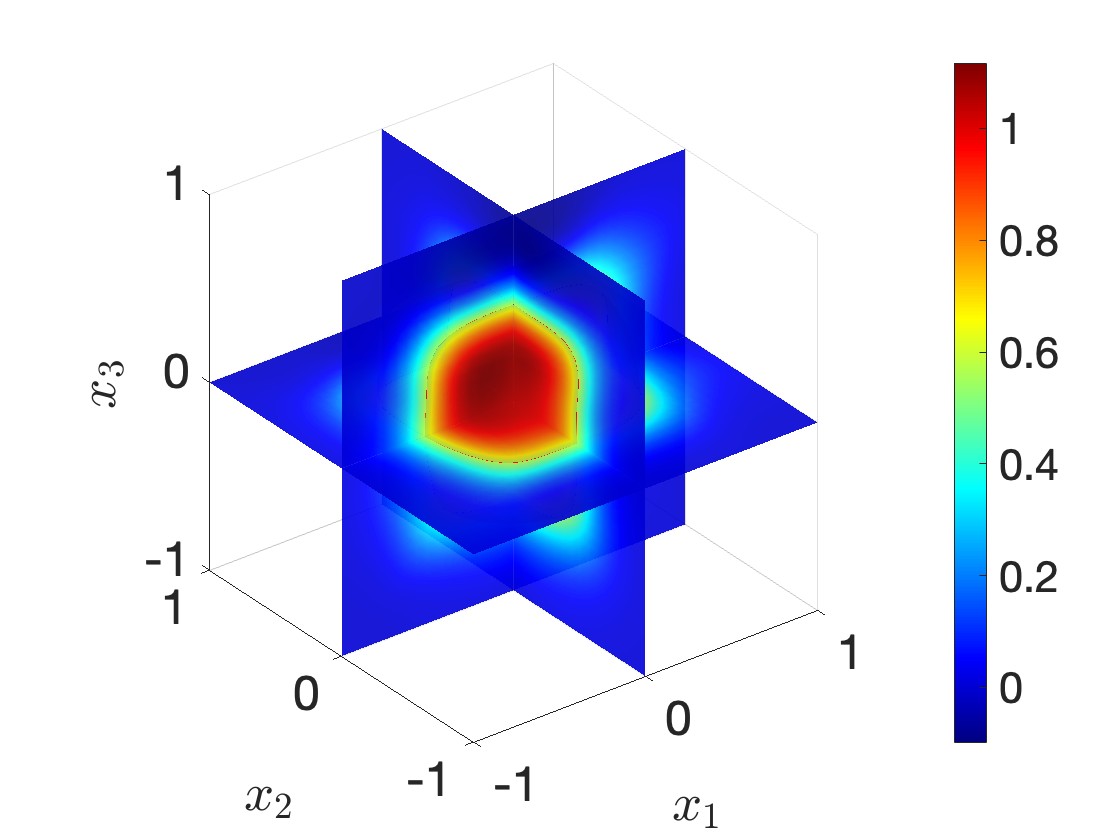}
    \end{subfigure}
    \hspace{-0.5cm}
    \begin{subfigure}[b]{0.3\textwidth}
    \includegraphics[width=\linewidth]{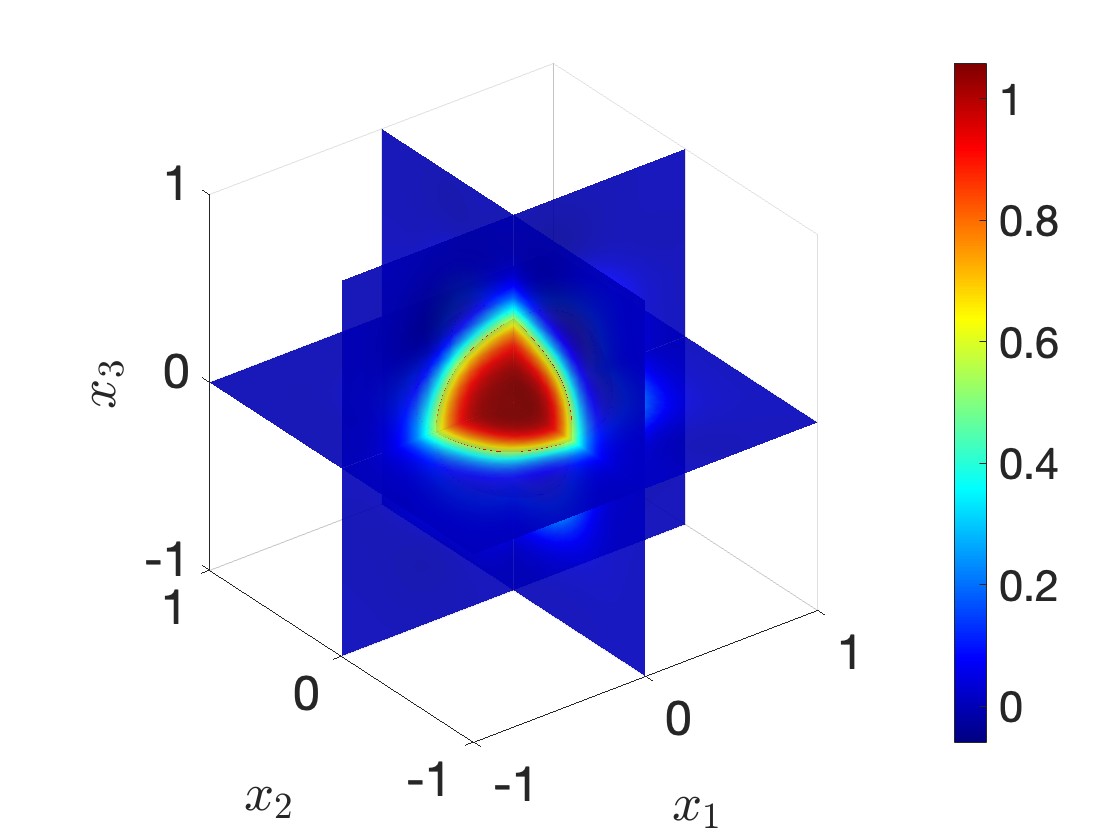}
    \end{subfigure}
    \hspace{-0.5cm}
    \begin{subfigure}[b]{0.3\textwidth}
    \includegraphics[width=\linewidth]{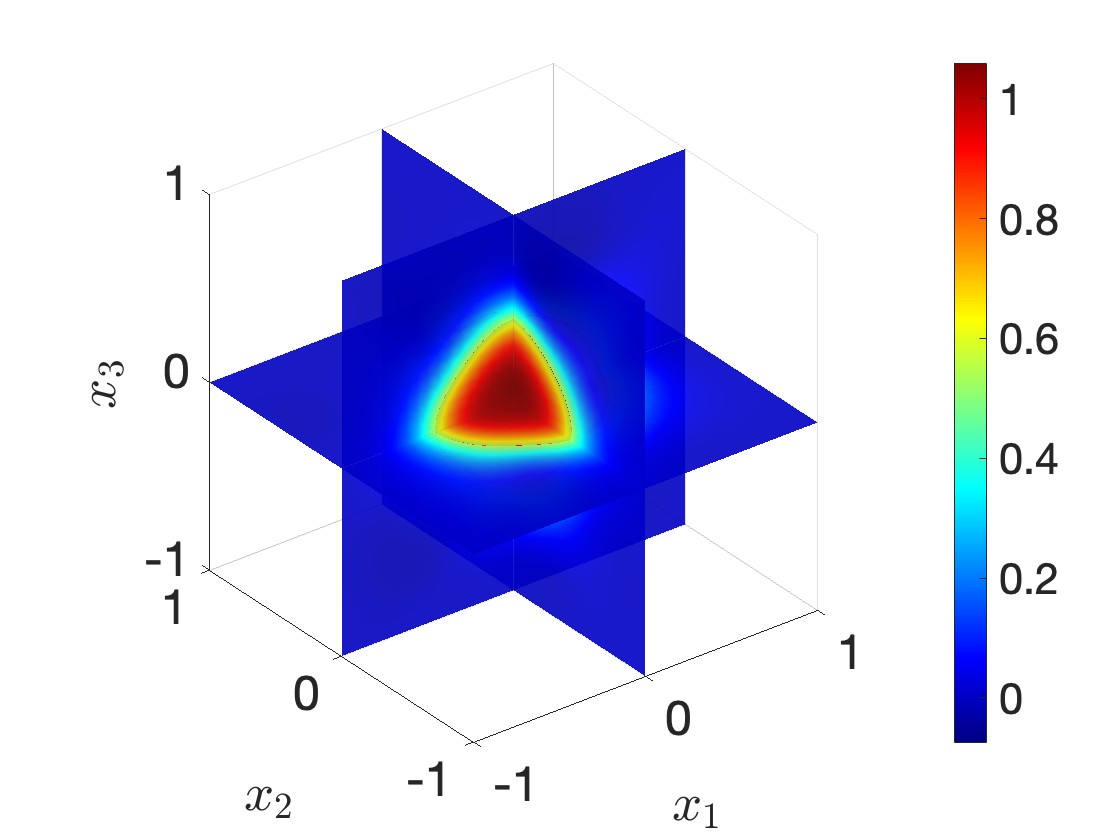}
    \end{subfigure}
    \caption{Slices reconstruction results for the $3$D case with phaseless boundary data. Top row: true scatterers. Bottom row: reconstruction by our algorithm.}
    \label{fig: 3D reconstruction Slices with phaseless boundary data (k=7,M=16) V2}
\end{figure}

 \begin{figure}[H]
    \centering
    \begin{subfigure}[b]{0.25\textwidth}
        \includegraphics[width=\linewidth]{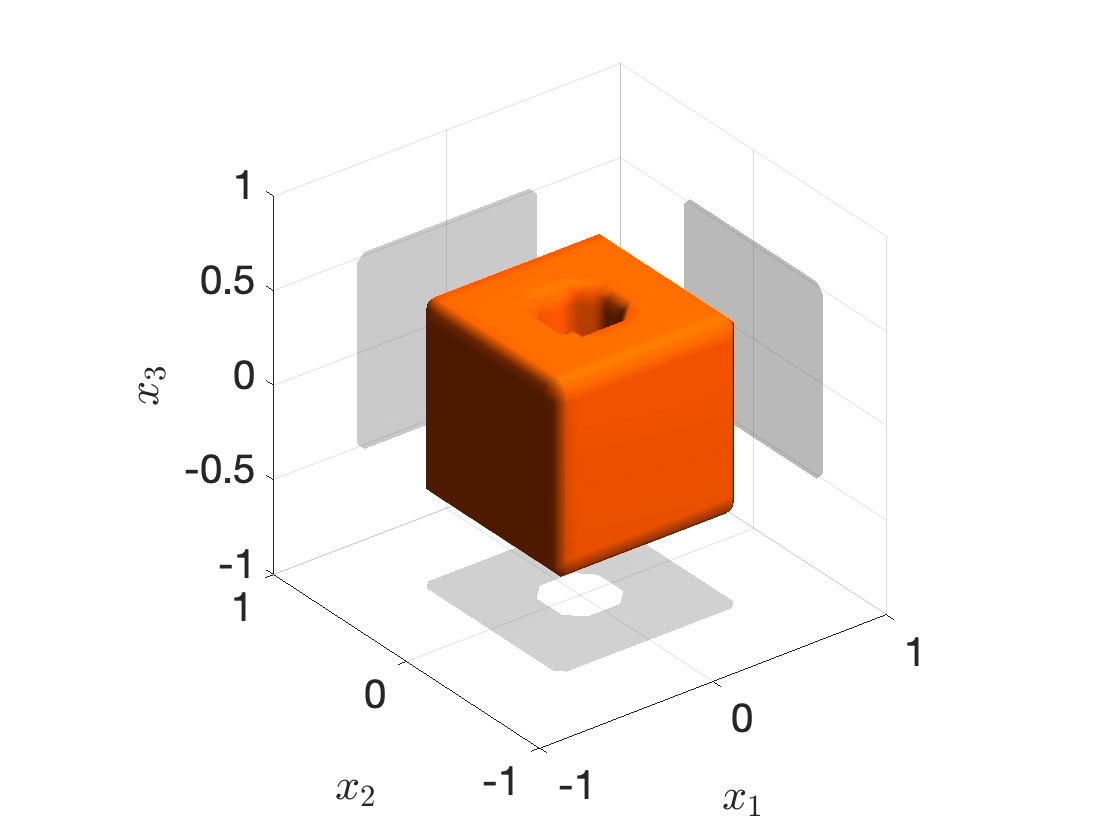}
    \end{subfigure}
    \hspace{-0.5cm}
    \begin{subfigure}[b]{0.25\textwidth}
        \includegraphics[width=\linewidth]{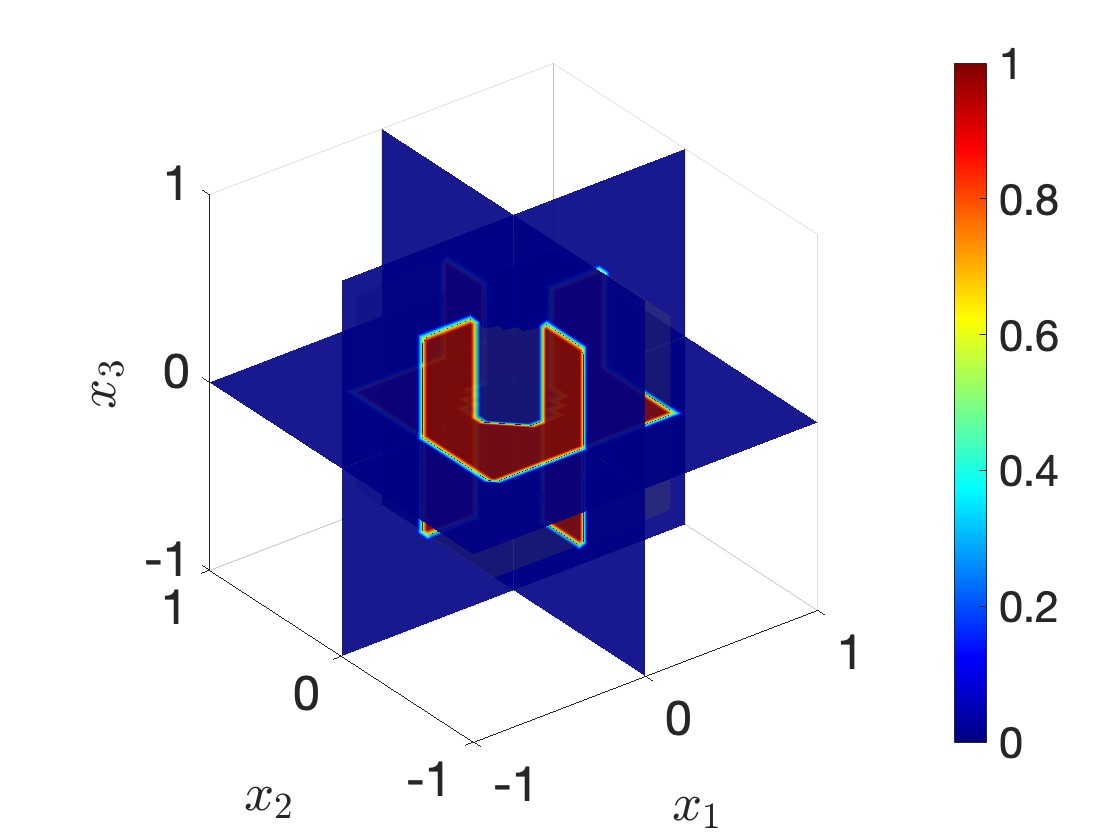}
    \end{subfigure}
    \hspace{-0.5cm}
    \begin{subfigure}[b]{0.25\textwidth}
        \includegraphics[width=\linewidth]{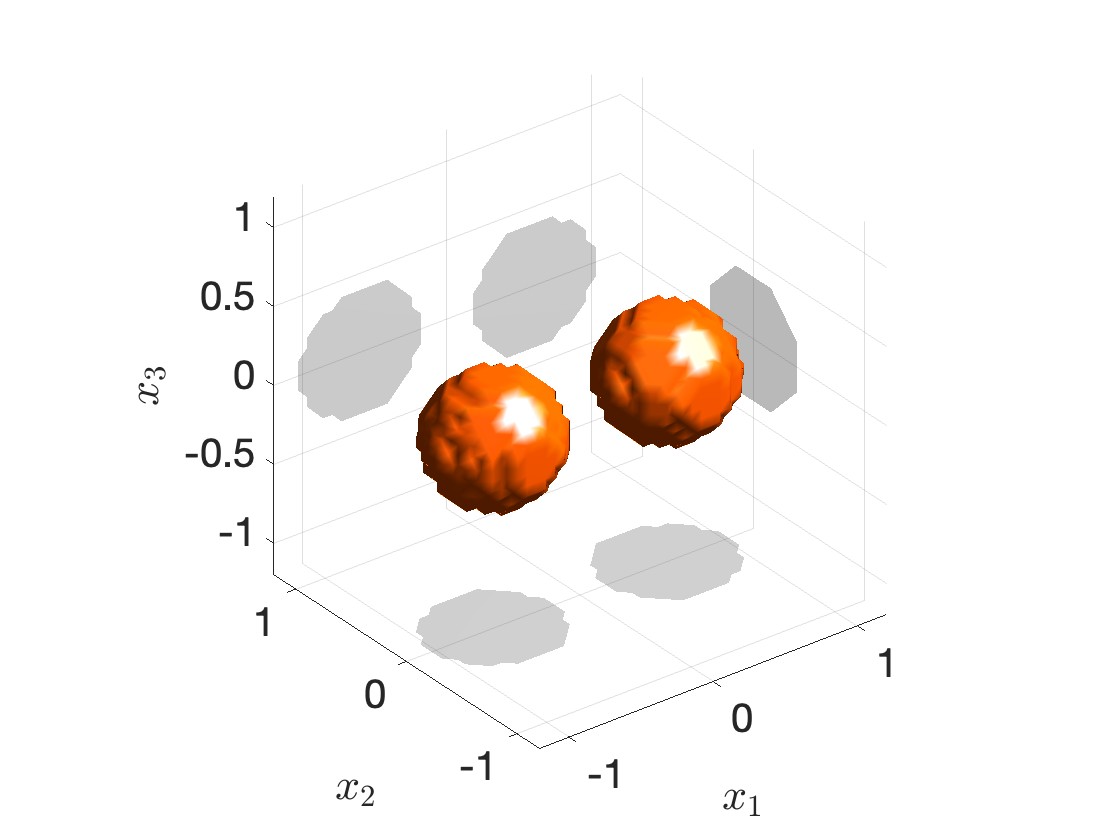}
    \end{subfigure}
    \hspace{-0.5cm}
    \begin{subfigure}[b]{0.25\textwidth}
        \includegraphics[width=\linewidth]{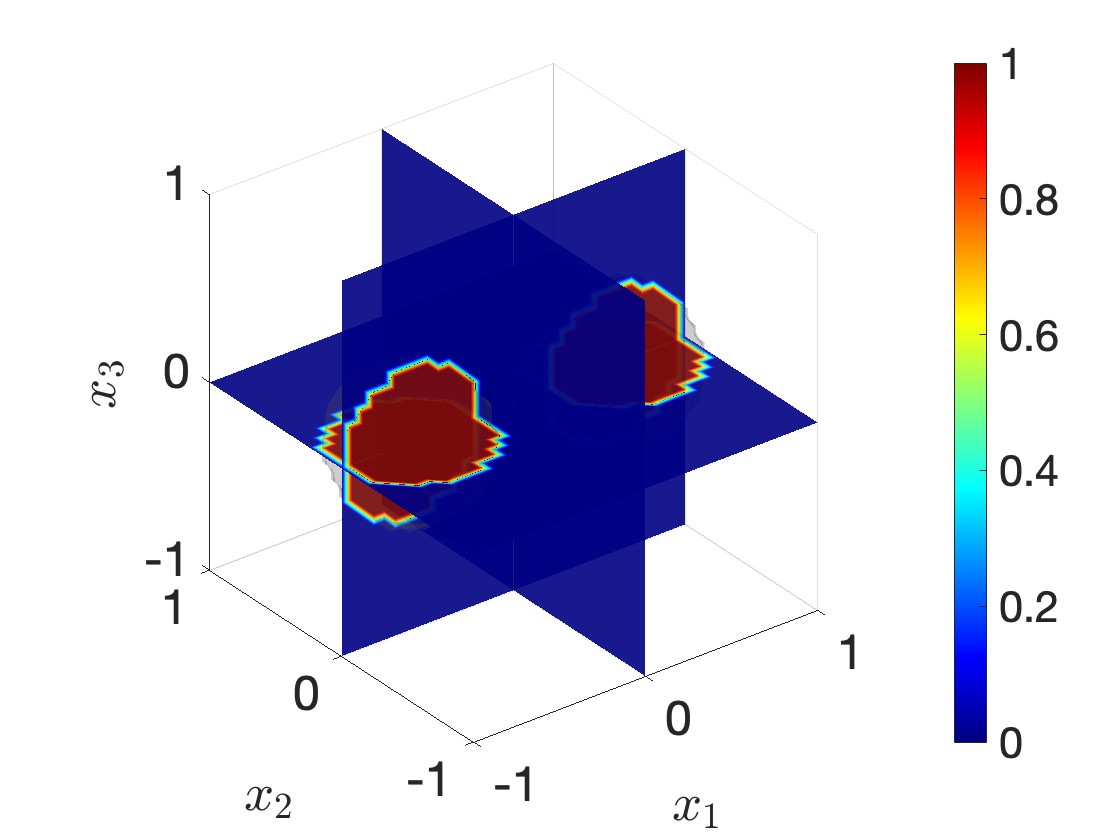}
    \end{subfigure}
    \hspace{-0.5cm}
    
    \medskip
    
    \centering
        \begin{subfigure}[b]{0.25\textwidth}
        \includegraphics[width=\linewidth]{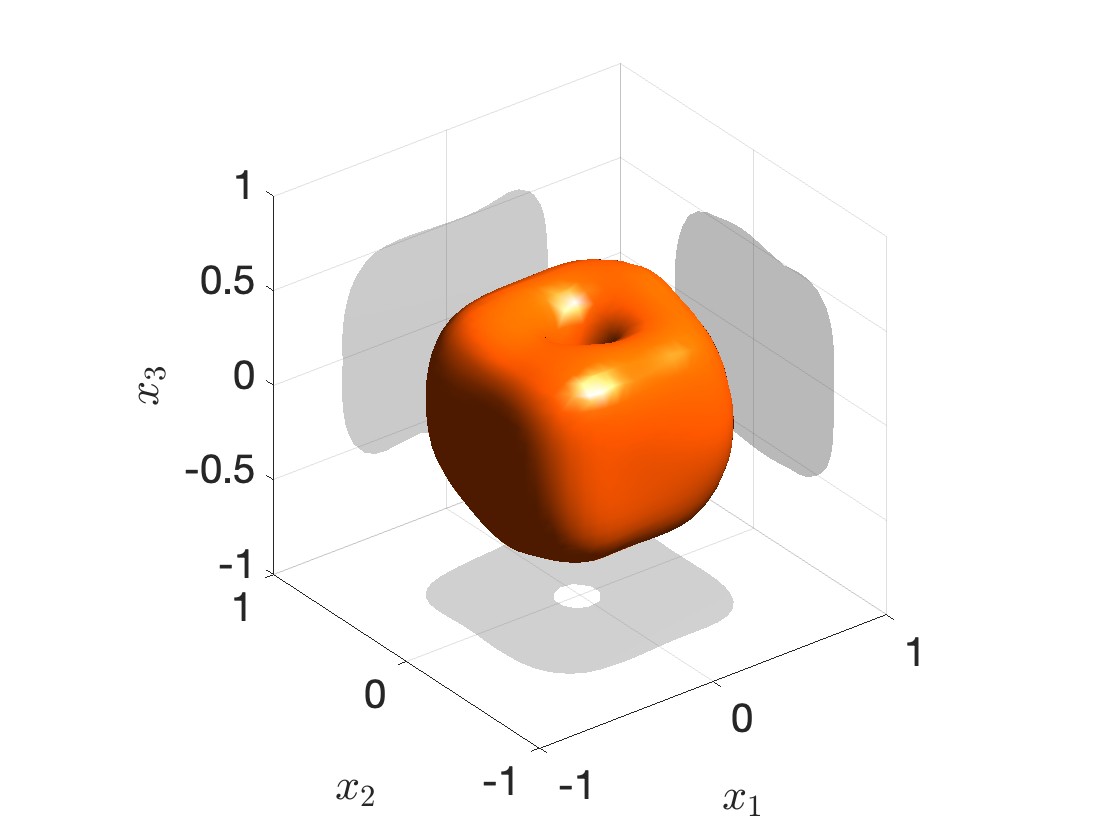}
    \end{subfigure}
    \hspace{-0.5cm}
    \begin{subfigure}[b]{0.25\textwidth}
        \includegraphics[width=\linewidth]{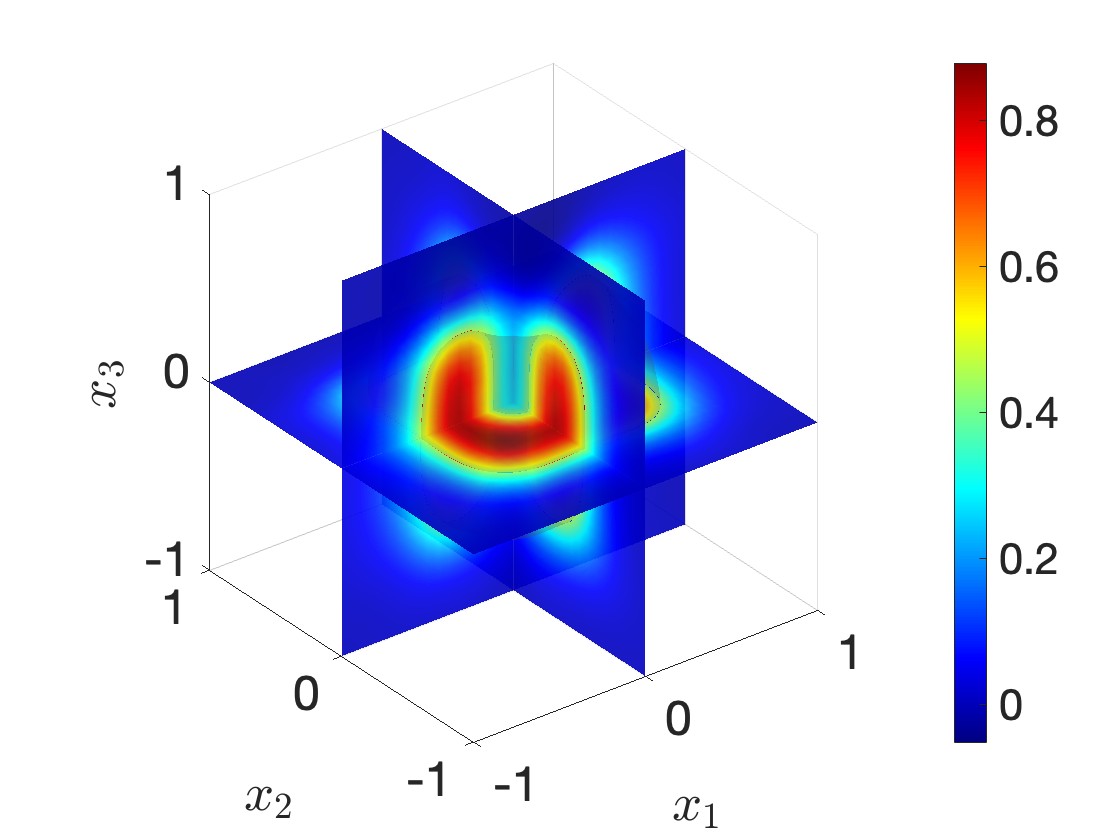}
    \end{subfigure}
    \hspace{-0.5cm}
    \begin{subfigure}[b]{0.25\textwidth}
        \includegraphics[width=\linewidth]{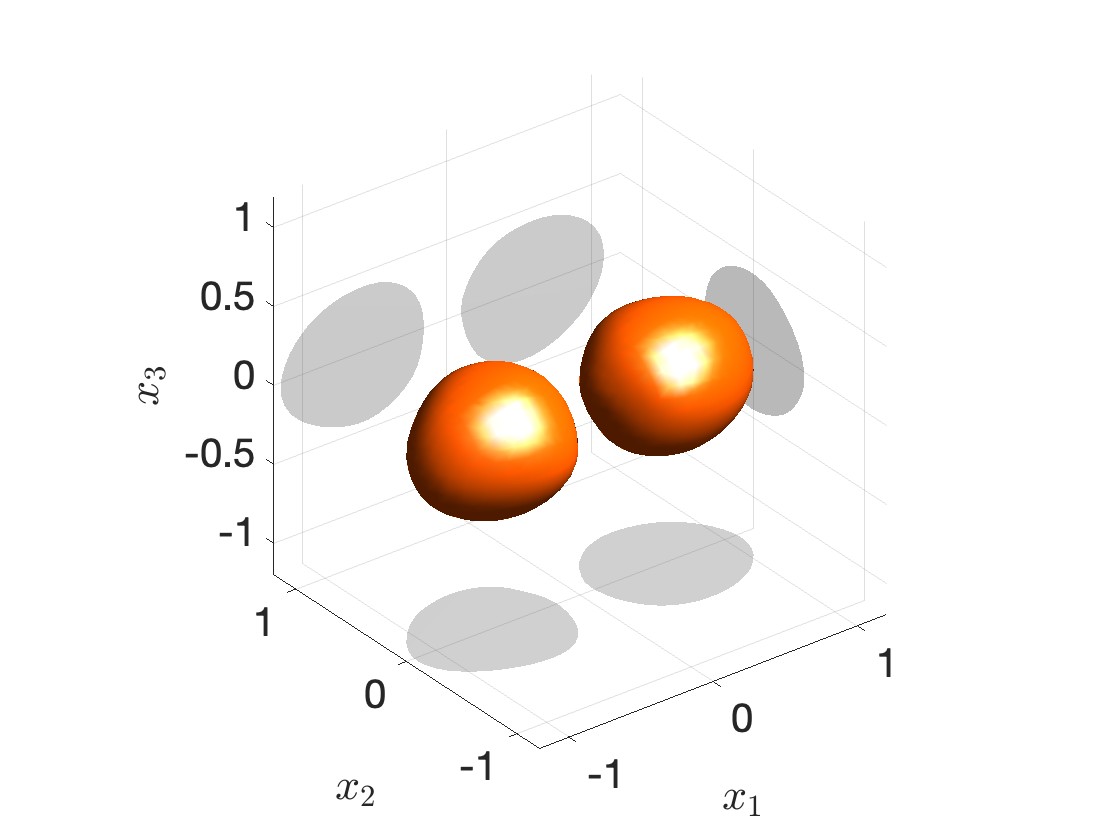}
    \end{subfigure}
    \hspace{-0.5cm}
    \begin{subfigure}[b]{0.25\textwidth}
        \includegraphics[width=\linewidth]{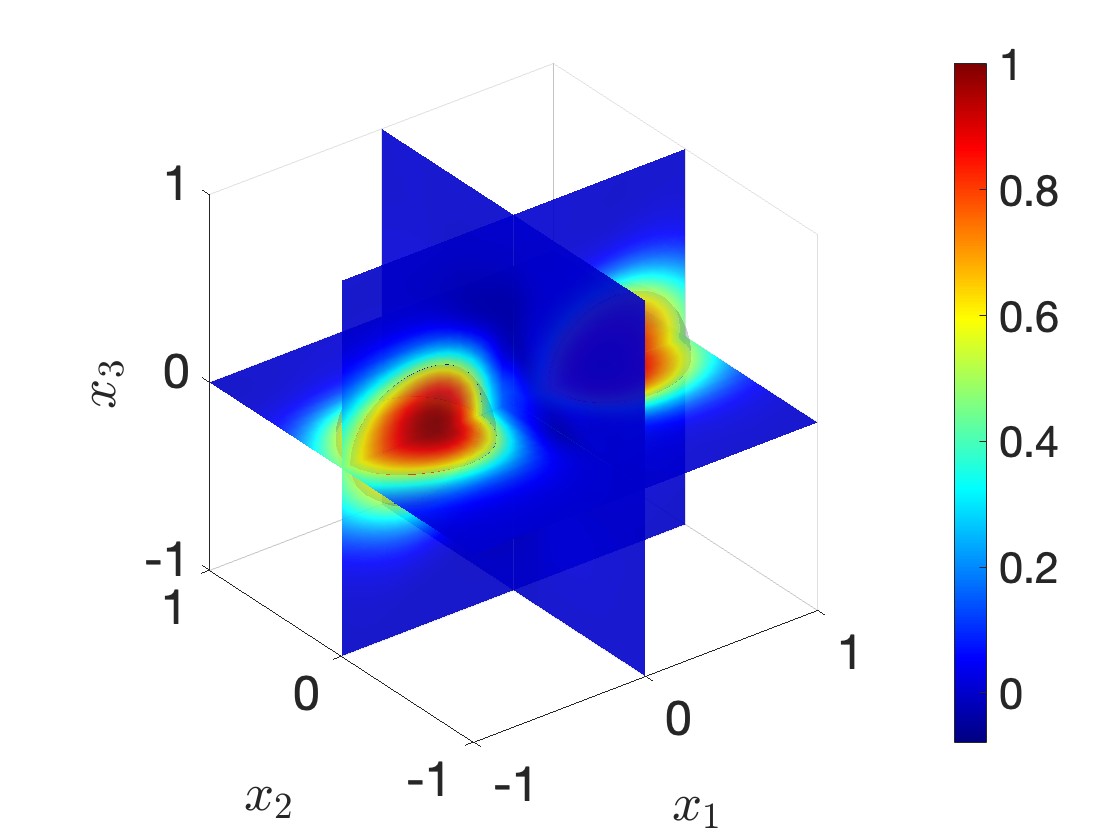}
    \end{subfigure}
    \hspace{-0.5cm}
    \caption{Reconstruction results for more complex geometries in $3$D case.}
    \label{fig: 3D reconstruction phaseless V3}
\end{figure}

\section{Conclusion}

We proposed a novel unsupervised deep learning algorithm for solving two- and three-dimensional inverse scattering problems using phaseless boundary measurement data. First, we constructed a system of approximate model equations. We verified that this imaging function provides reliable initial information about the location and geometry of the unknown scatterers. To improve computational efficiency and eliminate the Green’s function singularity, we reformulated the system in the Fourier domain, where we also establish a theoretical lower bound for Fourier series truncation, allowing fast computation while maintaining the accuracy. The resulting model equations then guide the concurrent training of two neural networks, which independently approximate the contrast source function and the unknown scatterer. Numerical experiments demonstrate that the proposed method achieves fast, accurate, and stable reconstructions, maintains robustness against noise. Future work will extend this framework to electromagnetic scattering problems.

\vspace{0.3 cm}

\noindent  \textbf{Acknowledgment.} The research is partially supported by NSF Grant DMS-2243854.

\printbibliography[heading=bibintoc,title={References}]

\end{document}

%% file: preamble.tex
\usepackage{amsmath, amsthm, amssymb, amsfonts}
\usepackage{amssymb}
\usepackage{bbm}
\usepackage{bm}
\usepackage{graphicx}
\usepackage{xcolor}
\usepackage{hyperref}
\usepackage{enumitem}
\usepackage{cleveref}
\usepackage{array}

\usepackage{amsthm}

\usepackage{multicol}
\usepackage[
  top=3cm,
  bottom=3cm,
  left=2cm,
  right=2cm
]{geometry}
\usepackage{lipsum}
\usepackage{mwe}

\usepackage{graphicx}
\usepackage{caption}
\usepackage{subcaption}
\usepackage{float}

\usepackage[linesnumbered,ruled,vlined]{algorithm2e}

\theoremstyle{plain}

\usepackage[
  backend=biber,
  style=numeric,
  giveninits=true,
  maxbibnames=99
]{biblatex}

\numberwithin{equation}{section}

\newtheorem{theorem}{Theorem}[section]

\theoremstyle{definition}

\newtheorem*{remark}{Remark}

\newcommand{\R}{\mathbb{R}}

\newcommand{\Z}{\mathbb{Z}}

\renewcommand{\bar}[1]{\overline{#1}}

\newcommand{\K}{\cal{K}}

\let\strokeL\L
\DeclareRobustCommand{\L}{\ifmmode\mathbf{L}\else\strokeL\fi}

\makeatletter
\DeclareFontFamily{OMX}{MnSymbolE}{}
\DeclareSymbolFont{MnLargeSymbols}{OMX}{MnSymbolE}{m}{n}
\SetSymbolFont{MnLargeSymbols}{bold}{OMX}{MnSymbolE}{b}{n}
\DeclareFontShape{OMX}{MnSymbolE}{m}{n}{
    <-6>  MnSymbolE5
   <6-7>  MnSymbolE6
   <7-8>  MnSymbolE7
   <8-9>  MnSymbolE8
   <9-10> MnSymbolE9
  <10-12> MnSymbolE10
  <12->   MnSymbolE12
}{}
\DeclareFontShape{OMX}{MnSymbolE}{b}{n}{
    <-6>  MnSymbolE-Bold5
   <6-7>  MnSymbolE-Bold6
   <7-8>  MnSymbolE-Bold7
   <8-9>  MnSymbolE-Bold8
   <9-10> MnSymbolE-Bold9
  <10-12> MnSymbolE-Bold10
  <12->   MnSymbolE-Bold12
}{}

\let\llangle\@undefined
\let\rrangle\@undefined
\DeclareMathDelimiter{\llangle}{\mathopen}%
                     {MnLargeSymbols}{'164}{MnLargeSymbols}{'164}
\DeclareMathDelimiter{\rrangle}{\mathclose}%
                     {MnLargeSymbols}{'171}{MnLargeSymbols}{'171}
\makeatother

\DeclareFontShape{OT1}{cmr}{bx}{sc}{<-> cmbcsc10}{}

%% file: references.bib
@article{Chen2019PhysicsinformedNN,
  title={Physics-informed neural networks for inverse problems in nano-optics and metamaterials.},
  author={Yuyao Chen and Lu Lu and George Em Karniadakis and Luca Dal Negro},
  journal={Opt. Express},
  year={2019},
  volume={28},
  number = {8},
  pages={
          11618-11633
        }
 % url={https://api.semanticscholar.org/CorpusID:208547648}
}

@article{Chen2020ARO,
  title={A Review of Deep Learning approaches for Inverse Scattering Problems (Invited Review)},
  author={Xudong Chen and Zhun Wei and Maokun Li and Paolo Rocca},
  journal={Prog. Electromagn. Res.},
  year={2020},
  volume = {167},
  pages= {67-81}
}

@article{Raissi2019PhysicsinformedNN,
  title={Physics-informed neural networks: A deep learning framework for solving forward and inverse problems involving nonlinear partial differential equations},
  author={Maziar Raissi and Paris Perdikaris and George Em Karniadakis},
  journal={J. Comput. Phys.},
  year={2019},
  volume={378},
  pages={686-707}
}

@article{Zhou2022ANN,
  title={A neural network warm-start approach for the inverse acoustic obstacle scattering problem},
  author={Zhou, Mo and Han, Jiequn and Rachh, Manas and Borges, Carlos},
  journal={J. Comput. Phys.},
  volume={490},
  pages={112341},
  year={2023},
  publisher={Elsevier}
}

@article{Gao2021OnAA,
  title={On an artificial neural network for inverse scattering problems},
  author={Yu Gao and Hongyu Liu and Xianchao Wang and Kai Zhang},
  journal={J. Comput. Phys.},
  year={2021},
  volume={448},
  pages={110771}
}

@inproceedings{Pokkunuru2023ImprovedTO,
  title={Improved Training of Physics-Informed Neural Networks Using Energy-Based Priors: a Study on Electrical Impedance Tomography},
  author={Akarsh Pokkunuru and Pedram Rooshenas and Thilo Strauss and Anuj Abhishek and Taufiquar R. Khan},
  booktitle={International Conference on Learning Representations},
  year={2023}
}

@article{Berg2001ContrastSI,
  title={Contrast Source Inversion Method: State of Art},
  author={Van den Berg, Peter M and Abubakar, Arafat},
  journal={Prog. Electromagn. Res.},
  year={2001},
  volume={34},
  pages={189-218}
}

@article{Cuomo2022ScientificML,
   title={Scientific machine learning through physics--informed neural networks: Where we are and what’s next},
  author={Cuomo, Salvatore and Di Cola, Vincenzo Schiano and Giampaolo, Fabio and Rozza, Gianluigi and Raissi, Maziar and Piccialli, Francesco},
  journal={Journal of Scientific Computing},
  volume={92},
  number={3},
  pages={88},
  year={2022},
  publisher={Springer}
}

@article{Zhang2023SolvingAI,
  title={Solving an inverse source problem by deep neural network method with convergence and error analysis},
  author={Hui Zhang and Jijun Liu},
  journal={Inverse Problems},
  year={2023},
  volume={39},
  pages={075013},
   number={7}
}

@incollection{vainikko2000fast,
  title={Fast solvers of the Lippmann-Schwinger equation},
  author={Vainikko, Gennadi},
  booktitle={Direct and inverse problems of mathematical physics},
  pages={423--440},
  year={2000},
  publisher={Springer}
}

@article{Le2022SamplingTM,
  title={Sampling type method combined with deep learning for inverse scattering with one incident wave},
  author={Thu Le and Dinh-Liem Nguyen and Vu Nguyen and Trung Truong},
  journal={Contemp. Math.},
  year={2023},
  volume={784},
  pages={63-80}
}

@article{Bulyshev2004ThreedimensionalVM,
  title={Three-dimensional vector microwave tomography: theory and computational experiments},
  author={Alexander E. Bulyshev and Alexandre E Souvorov and Serguei Y. Semenov and Vitaly G. Posukh and Yuri E. Sizov},
  journal={Inverse Problems},
  year={2004},
  volume={20},
   number={4},
  pages={1239-1259},
  url={}
}

@book{Persico2014IntroductionTG,
  title={Introduction to ground penetrating radar: inverse scattering and data processing},
  author={Persico, Raffaele},
  year={2014},
  publisher={John Wiley \& Sons}
}

@article{Klibanov2014PhaselessIS,
  title={Phaseless Inverse Scattering Problems in Three Dimensions},
  author={Michael V. Klibanov},
  journal={SIAM J. Appl. Math.},
  year={2014},
  volume={74},
  number = {2},
  pages={392-410},
  url={}
}

@article{Klibanov2015ReconstructionPF,
  title={Reconstruction Procedures for Two Inverse Scattering Problems Without the Phase Information},
  author={Michael V. Klibanov and Vladimir G. Romanov},
  journal={SIAM J. Appl. Math.},
  year={2015},
  volume={76},
  number={1},
  pages={178-196},
  url={}
}

@article{Novikov2015FormulasFP,
  title={Formulas for phase recovering from phaseless scattering data at fixed frequency},
  author={Roman G. Novikov},
  journal={Bull. Sci. Math.},
  year={2015},
   number={8},
  volume={139},
  pages={923-936},
  url={}
}

@article{Xu2017UniquenessII,
  title={Uniqueness in Inverse Scattering Problems with Phaseless Far-Field Data at a Fixed Frequency},
  author={Xiaoxu Xu and Bo Zhang and Haiwen Zhang},
  journal={SIAM J. Appl. Math.},
  year={2017},
  volume={78},
  number={3},
  pages={1737-1753},
  url={}
}

@article{Bao2013NumericalSO,
  title={Numerical solution of an inverse diffraction grating problem from phaseless data.},
  author={Gang Bao and Peijun Li and Junliang Lv},
  journal={J. Opt. Soc. Am. A},
  year={2013},
  volume={30},
  number={3},
  pages={
          293-9
        }
}

@article{Chen2015PhaselessIB,
  title={Phaseless imaging by reverse time migration: acoustic waves},
  author={Chen, Zhiming and Huang, Guanghui},
  journal={Numer. Math. Theor. Meth. Appl.},
  volume={10},
  number={1},
  pages={1--21},
  year={2017},
  publisher={Cambridge University Press}
}

@article{Zhang2018FastIO,
  title={Fast imaging of scattering obstacles from phaseless far-field measurements at a fixed frequency},
  author={Bo Zhang and Haiwen Zhang},
  journal={Inverse Problems},
  number={10},
  pages={104005},
  year={2018},
  volume={34},
  url={}
}

@article{Zhang2020AnAF,
  title={An Approximate Factorization Method for Inverse Acoustic Scattering with Phaseless Total-Field Data},
  author={Bo Zhang and Haiwen Zhang},
  journal={SIAM J. Appl. Math.},
  year={2020},
  volume={80},
  pages={2271-2298},
  url={}
}

@article{nguyen2024tnet,
  title={TNET: A model-constrained Tikhonov network approach for inverse problems},
  author={Nguyen, Hai V and Bui-Thanh, Tan},
  journal={SIAM J. Sci. Comput.},
  volume={46},
  number={1},
  pages={C77--C100},
  year={2024},
  publisher={SIAM}
}

@article{Nguyen2024TAENAM,
  title={TAEN: A Model-Constrained Tikhonov Autoencoder Network for Forward and Inverse Problems},
  author={Hai Van Nguyen and Tan Bui-Thanh and Clint Dawson},
  journal={Comput. Methods Appl. Mech. Eng.},
  volume = {446},
  pages = {118245},
  year = {2025}
}

@article{zhang2020unique,
  title={Unique determinations in inverse scattering problems with phaseless near-field measurements},
  author={Zhang, Deyue and Guo, Yukun and Sun, Fenglin and Liu, Hongyu},
  journal={Inverse Probl. Imaging},
  volume={14},
  number={3},
  year={2020}
}

@article{klibanov2016two,
  title={Two reconstruction procedures for a 3D phaseless inverse scattering problem for the generalized Helmholtz equation},
  author={Klibanov, Michael V and Romanov, Vladimir G},
  journal={Inverse Problems},
  volume={32},
  number={1},
  pages={015005},
  year={2016},
  publisher={IOP Publishing}
}

@article{klibanov2018numerical,
  title={A numerical method to solve a phaseless coefficient inverse problem from a single measurement of experimental data},
  author={Klibanov, Michael V and Koshev, Nikolay A and Nguyen, Dinh-Liem and Nguyen, Loc H and Brettin, Aaron and Astratov, Vasily N},
  journal={SIAM J. Imaging Sci.},
  volume={11},
  number={4},
  pages={2339--2367},
  year={2018},
  publisher={SIAM}
}

@article{klibanov2019coefficient,
  title={A coefficient inverse problem with a single measurement of phaseless scattering data},
  author={Klibanov, Michael V and Nguyen, Dinh-Liem and Nguyen, Loc H},
  journal={ SIAM J. Appl. Math.},
  volume={79},
  number={1},
  pages={1--27},
  year={2019},
  publisher={SIAM}
}

@article{ning2025direct,
  title={A direct sampling method and its integration with deep learning for inverse scattering problems with phaseless data},
  author={Ning, Jianfeng and Han, Fuqun and Zou, Jun},
  journal={SIAM J. Sci. Comput.},
  volume={47},
  number={2},
  pages={C343--C368},
  year={2025},
  publisher={SIAM}
}

@article{Gao2019MachineLB,
  title={Machine learning based data retrieval for inverse scattering problems with incomplete data},
  author={Yu Gao and Kai Zhang},
  journal={J. Inverse Ill-posed Probl.},
  year={2019},
  volume={29},
  pages={249 - 266},
  url={}
}

@article{Yin2020ANN,
  title={A neural network scheme for recovering scattering obstacles with limited phaseless far-field data},
  author={Weishi Yin and Wenhong Yang and Hongyu Liu},
  journal={J. Comput. Phys.},
  year={2020},
  volume={417},
  pages={109594},
  url={}
}

@article{Dong2018ARB,
  title={A reference ball based iterative algorithm for imaging acoustic obstacle from phaseless far-field data},
  author={Heping Dong and Deyue Zhang and Yukun Guo},
  journal={Inverse Probl. Imaging},
  volume = {13},
number = {1},
pages = {177-195},
year = {2019}
}

@article{Ji2019InverseAS,
  title={Inverse Acoustic Scattering with Phaseless Far Field Data: Uniqueness, Phase Retrieval, and Direct Sampling Methods},
  author={Xia Ji and Xiaodong Liu and Bo Zhang},
  journal={SIAM J. Imaging Sci.},
  year={2019},
  volume={12},
  pages={1163-1189}
}

@article{Ammari2013PartialDR,
  title={Partial Data Resolving Power of Conductivity Imaging from Boundary Measurements},
  author={Habib M. Ammari and Josselin Garnier and Knut S{\o}lna},
  journal={SIAM J. Math. Anal.},
  year={2013},
  volume={45},
  pages={1704-1722},
  url={}
}

@book{Colton1992InverseAA,
  title={Inverse acoustic and electromagnetic scattering theory},
  author={Colton, David L and Kress, Rainer},
  volume={93},
  year={1998},
  publisher={Springer}
}

@book{beilina2012approximate,
  title={Approximate global convergence and adaptivity for coefficient inverse problems},
  author={Beilina, Larisa and Klibanov, Michael Victor},
  year={2012},
  publisher={Springer Science \& Business Media}
}

@book{cakoni2022inverse,
  title={Inverse scattering theory and transmission eigenvalues},
  author={Cakoni, Fioralba and Colton, David and Haddar, Houssem},
  year={2022},
  publisher={SIAM}
}

@book{ito2014inverse,
  title={Inverse problems: Tikhonov theory and algorithms},
  author={Ito, Kazufumi and Jin, Bangti},
  volume={22},
  year={2014},
  publisher={World Scientific}
}

@article{submitted,
  title={A Spectral Model-Informed Neural Network for Inverse Source Problems},
  author={Nguyen, Dinh-Liem and Nguyen, Nhung H. and Ravi, Aravinth},
  journal={submitted},
  year={2026}
}

@article{Rzio2026ARO,
  title={A Review of Recent Machine Learning Approaches for Inverse Scattering Problems},
  author={Sofia R{\'e}zio and Pedro Guimar{\~a}es and Pedro Serranho},
  journal={Archives of Computational Methods in Engineering},
  year={2026},
  url={}
}

@article{Khoo2018SwitchNetAN,
  title={SwitchNet: a neural network model for forward and inverse scattering problems},
  author={Yuehaw Khoo and Lexing Ying},
  journal={SIAM J. Sci. Comput.},
  year={2018},
  volume={41},
  pages={A3182-A3201},
  url={}
}

@article{Fan2019SolvingEI,
title = {Solving electrical impedance tomography with deep learning},
author = {Yuwei Fan and Lexing Ying},
journal = {J. Comput. Phys.},
volume = {404},
pages = {109119},
year = {2020}
}

@article{Zhang2022SolvingTW,
  title={Solving the Wide-band Inverse Scattering Problem via Equivariant Neural Networks},
  author={Borong Zhang and Leonardo Zepeda-N'unez and Qin Li},
  journal={J of Comput Appl Math},
  year={2024},
  volume={451},
  url={}
}

@article{Yin2024PhysicsawareDL,
  title={Physics-aware deep learning framework for the limited aperture inverse obstacle scattering problem},
  author={Yunwen Yin and Liang Yan},
  journal={SIAM J Math Anal},
  year={2024},
  volume={56(3)},
  url={}
}

@article{Guo2021PhysicsED,
  title={Physics Embedded Deep Neural Network for Solving Full-Wave Inverse Scattering Problems},
  author={Rui Guo and Zhichao Lin and Tao Shan and Xiaoqian Song and Maokun Li and F. Yang and Shenheng Xu and Aria Abubakar},
  journal={IEEE Transactions on Antennas and Propagation},
  year={2021},
  volume={70},
  pages={6148-6159},
  url={}
}

@article{le2026inverse,
  title={Inverse scattering without phase: Carleman convexification and phase retrieval via the Wentzel--Kramers--Brillouin approximation},
  author={Le, Thuy T and Nguyen, Phuong M and Nguyen, Loc H},
  journal={Comput. Meth. Appl. Mech. Eng.},
  volume={448},
  pages={118439},
  year={2026},
  publisher={Elsevier}
}
